\newtheorem{theorem}{Theorem}[section]
\newtheorem{corollary}[theorem]{Corollary}
\newtheorem{definition}[theorem]{Definition}
\newtheorem{lemma}[theorem]{Lemma}
\newtheorem{remark}[theorem]{Remark}
\numberwithin{equation}{section}
\def\NN{\mathbb{N}}
\def\ZZ{\mathbb{Z}}
\def\RR{\mathbb{R}}
\def\CC{\mathbb{C}}
\def\TT{\mathbb{T}}
\def\O{\mathcal{O}}
\def\zed{\mathcal{Z}}
\def\sg{\sigma}
\newcommand{\norm}[2]{{\left\|#1\right\|}_{#2}}
\newcommand{\Lp}{L^2_p(\TT)}
\newcommand{\Hp}{H^1_p(\TT)}
\newcommand{\Hpd}{H^{-1}_p(\TT)}
\newcommand{\Hs}[1]{H^{#1}_p(\TT)}
\newcommand{\ttilde}[1]{\tilde{\tilde{#1}}}
\newcommand{\ue}[1]{#1^\varepsilon}
\DeclareMathOperator\erfc{erfc}
\title{Null-controllability properties of the wave equation with a second order memory term}
\author{Umberto Biccari}
\address{DeustoTech, University of Deusto, 48007 Bilbao, Basque Country, Spain.}
\address{Facultad de Ingenier\'ia, Universidad de Deusto, Avenida de las Universidades 24, 48007 Bilbao, Basque Country, Spain, +34 944139003 - 3282.}
\email{umberto.biccari@deusto.es, u.biccari@gmail.com}
\thanks{The work of Umberto Biccari was partially supported by the Advanced Grant DYCON (Dynamic Control) of the European Research Council Executive Agency, the Grants MTM2014-52347, MTM2017-92996 and MTM2017-82996-C2-1-R COSNET of MINECO (Spain), by the ELKARTEK project KK-2018/00083 ROAD2DC of the Basque Government, and by the Grant FA9550-18-1-0242 of AFOSR.}
\author{Sorin Micu}
\address{Department of Mathematics, University of Craiova, 200585, Craiova and Institute of Mathematical Statistics and Applied Mathematics, 70700, Bucharest, Romania.}
\email{sd$\_$micu@yahoo.com}
\keywords{Wave equation, memory, null controllability, moving control, moment method}
\subjclass[2010]{93B05, 74D05, 35L05, 93B60, 30E05}
\begin{document} 
\bibliographystyle{acm}
\maketitle

\begin{abstract}
We study the internal controllability of a wave equation with memory in the principal part, defined on the one-dimensional torus $\mathbb{T}=\RR/2\pi\ZZ$. We assume that the control is acting on an open subset $\omega(t)\subset\mathbb{T}$, which is moving with a constant velocity $c\in\mathbb{R}\setminus\{-1,0,1\}$. The main result of the paper shows that the equation is null controllable in a sufficiently large time $T$ and for initial data belonging to suitable Sobolev spaces. Its proof follows from a careful analysis of the spectrum associated with our problem and from the application of the classical moment method.
\end{abstract}

\section{Introduction}\label{into_sect}

Let $\mathbb{T}:=\RR/2\pi\ZZ$ be the one-dimensional torus and let $Q:=(0,T)\times\mathbb{T}$ and $M\in\RR$. This paper is devoted to the analysis of controllability properties of the following wave equation involving a memory term:
\begin{align}\label{wave_mem}
	\begin{cases}
		\displaystyle y_{tt}(t,x)-y_{xx}(t,x) + M\int_0^t y_{xx}(s,x)\,{\rm ds} = \mathbf{1}_{\omega(t)}u(t,x), & (t,x)\in Q
		\\
		y(0,x)=y^0(x),\;\;y_t(0,x)=y^1(x), & x\in\mathbb{T}.
	\end{cases}
\end{align}

In \eqref{wave_mem} the memory enters in the principal part, and the control is applied on an open subset $\omega(t)$ of the domain $\mathbb{T}$ where the waves propagate. The support $\omega(t)$ of the control $u$ at time $t$ moves in space with a constant velocity $c$, that is,
\begin{align*}
	\omega(t) = \omega_0-ct,
\end{align*}
with $\omega_0\subset\mathbb{T}$ a reference set, open and non empty. The control $u\in L^2(\O)$ is then an applied force localized in $\omega(t)$, where
\begin{align*}
	\O:=\Big\{(t,x)\,\big|\, t\in(0,T), x\in\omega(t)\Big\}.
\end{align*}

Moreover, our equation being defined on the torus $\mathbb{T}$, there is no need of specifying the boundary conditions since they will automatically be of periodic type.

Evolution equations involving memory terms appear in several different applications, for modeling natural and social phenomena which, apart from their current state, are influenced also by their history. Some classical examples are viscoelasticity, non-Fickian diffusion and thermal processes with memory (see \cite{pruss2013evolutionary,renardy1987mathematical} and the references therein). 

Controllability problems for evolution equations with memory terms have been studied extensively in the past. Among other contributions, we mention \cite{kim1993control,leugering1984exact,leugering1987exact,loreti2012boundary, loreti2010reachability,mustafa2015control,pandolfi2013boundary,romanov2013exact} which, as in our case, deal with hyperbolic type equations. Nevertheless, in the majority of these works the issue has been addressed focusing only on the steering of the state of the system to zero at time $T$, without considering that the presence of the memory introduces additional effects that makes the classical controllability notion not suitable in this context. Indeed, driving the solution of \eqref{wave_mem} to zero is not sufficient to guarantee that the dynamics of the system reaches an equilibrium. If we were considering an equation without memory, once its solution is driven to rest at time $T$ by a control, then it vanishes for all $t\geq T$ also in the absence of control. This is no longer true when introducing a memory term, which produces accumulation effects affecting the definition of an equilibrium point for the system and its overall stability.

For these reasons, in some recent papers (see, e.g., \cite{chaves2017controllability,lu2017null}), the classical notion of controllability for a wave equation, requiring the state and its velocity to vanish at time $T$, has been extended with the additional condition
\begin{align*}
	\int_0^T y_{xx}(s,x)\,{\rm dx} = 0,
\end{align*}
that is, with the imposition that the control shall \textit{shut down} also the memory effects. 

The present article is thus devoted to the analysis of the controllability of \eqref{wave_mem} in this special context that we just mentioned, which we shall denote \textit{memory-type null controllability}. Besides, as in other related previous works (see, for instance, the recent paper \cite{chaves2014null}), we shall view the wave model \eqref{wave_mem} as the coupling of a wave-like PDE with an ODE. This approach will enhance the necessity of a moving control strategy. Indeed, we will show that the memory-type controllability of the system fails if the support $\mathcal O$ of the control $u$ is time-independent, unless of course one of the following two situations occurs:
\begin{itemize}
	\item the trivial case where the control set coincides with the entire domain, that is $\mathcal O = Q$;
	\item when $M=0$, in which case \eqref{wave_mem} reduces to the one-dimensional wave equation, whose controllability properties are nowadays classical (\cite{lions1988exact}). Thus, in what follows we will always assume $M\neq 0$.
\end{itemize}

This will be justified by showing the presence of waves localized along vertical characteristics which, due to their lack of propagation, contradict the classical {\it Geometric Control Condition} for the observability of wave processes. We mention that this strategy of a moving control has been successfully used in the past in the framework of the structurally damped wave equation and of the Benjamin-Bona-Mahony equation (\cite{martin2013null,rosier2013unique}). 

The model we study is close to the one considered in \cite{lu2017null}, replacing the zero order memory term $	\int_0^t y(s,x)\,{\rm ds}$ by a second order one. Let us mention that, in order to study the controllability properties of our model, we cannot apply directly the arguments from \cite{lu2017null}, where the compactness of the memory term plays a fundamental role in the proof of the observability inequality. The technique we will use is based on spectral analysis and explicit construction of biorthogonal sequences. Although this approach limits our study to a one-dimensional case, it has the additional advantage of offering new insights on the behavior of this type of problems, through the detailed study of the properties of the spectrum. 

This paper is organized as follows. In Section \ref{wp_sect}, we present the functional setting in which we will work and we discuss the well-posedness of our equation. Moreover, we present our main controllability result. In Section \ref{cv_sect}, we give a characterization of the control problem through the adjoint equation associated to \eqref{wave_mem}. Section \ref{spectrum_sect} is devoted to a complete spectral analysis for our problem, which will then be fundamental in the construction of the biorthogonal sequence in Section \ref{bio_sec}, and the resolution of the moment problem. The controllability of our original equation is proved in Section \ref{control_sect}. Finally, in Section \ref{loc_sol_sect} we will present a further discussion on the necessity of a moving control, based on a geometric optics analysis of our equation.

\section{Functional setting, well-posedness and controllability result}\label{wp_sect}

The memory wave equation \eqref{wave_mem} is well posed in a suitable functional setting that we describe below. First of all, let us introduce the space of square-integrable functions on $\mathbb{T}$ with zero mean as
\begin{align}\label{L2p}
    \Lp:=\left\{f:\mathbb{T}\to\CC\,\Big|\int_{-\pi}^\pi |f(x)|^2\,{\rm dx} <+\infty,\,\int_{-\pi}^\pi f(x)\,{\rm dx} =0\right\},
\end{align}
and the corresponding Sobolev space
\begin{align}\label{H1p}
    \Hp:=\left\{f\in\Lp \,\Big|\,\int_{-\pi}^\pi |f_x(x)|^2\,{\rm dx}<+\infty\right\}.
\end{align}

We stress that if $f\in\Lp$, then it can be regarded as a $2\pi$-periodic function in $\mathbb{R}$, that is $f(x+2\pi)=f(x)$ for a.e. $x\in\RR$. Moreover, $\Lp$ and $\Hp$ are Hilbert spaces endowed with the norms
\begin{align*}
    \norm{f}{\Lp}:= \left(\frac{1}{2\pi}\int_{-\pi}^\pi |f(x)|^2\,{\rm dx}\right)^{\frac 12},\;\;\; \norm{f}{\Hp}:=\left(\frac{1}{2\pi}\int_{-\pi}^\pi |f_x(x)|^2\,{\rm dx}\right)^{\frac 12} = \norm{f_x}{\Lp}.
\end{align*}

We denote by $\Hpd = \left(\Hp\right)'$ the dual of $\Hp$ with respect to the pivot space $\Lp$. Then, if $f\in\Hp$ we have  $f_{xx}\in\Hpd$ with
\begin{align*}
    \langle f_{xx},\phi\rangle_{\Hpd,\Hp} :=\int_{-\pi}^\pi f_x\phi_x\,{\rm dx}\;\;\;\forall\,\phi\in\Hp.
\end{align*}

Next, given any integer $2\leq \sg<+\infty$, we define the higher order Sobolev space $\Hs{\sg}$ by recurrence in the following way
\begin{align}\label{Hsp}
    \Hs{\sg}:=\left\{f\in\Hs{\sg-1} \,\Big|\,\int_{-\pi}^\pi |D^\sg f(x)|^2\,{\rm dx} < +\infty\right\},
\end{align}
where with $D^\sg f$ we indicate the derivative of order $\sg$ of the function $f$. We have that also $\Hs{\sg}$ is a Hilbert space, endowed with the norm
\begin{align*}
    \norm{f}{\Hs{\sg}}:=\left(\frac{1}{2\pi}\int_{-\pi}^\pi |D^\sg f(x)|^2\,{\rm dx}\right)^{\frac 12} = \norm{D^\sg f}{\Lp},
\end{align*}

Moreover, we denote by ${\Hs{-\sg} = \left(\Hs{\sg}\right)'}$ the dual of $\Hs{\sg}$ with respect to the pivot space $\Lp$, endowed with the norm
\begin{align*}
    \norm{f}{\Hs{-\sg}}:=\sup_{\underset{\norm{\phi}{\Hs{\sg}}=1}{\phi\in\Hs{\sg}}}\left|\langle f,\phi\rangle_{\Hs{-\sg},\Hs{\sg}}\,\right|.
\end{align*}

We can now prove that our system is well-posed in the functional setting that we have just described. In more detail, we have the following result.

\begin{theorem}\label{wp_thm}
For any $M\neq 0$, $(y^0,y^1)\in\Hp\times \Lp$ and $u\in L^2(\O)$ such that 
\begin{equation}\label{eq:m0}
\int_{-\pi}^\pi \mathbf{1}_{\omega(t)}u(t,x)\,{\rm dx}=0\qquad t\in (0,T),
\end{equation}
system \eqref{wave_mem} admits a unique weak solution $y\in C([0,T];\Hp)\cap C^1([0,T];\Lp)$. Moreover, there exists a positive constant $\mathcal C$, depending only on $T$, such that
\begin{align}\label{norm_est}
    \norm{y}{C([0,T];\Hp)\cap C^1([0,T];\Lp)}\leq \mathcal C\left(\norm{(y^0,y^1)}{\Hp\times\Lp}+\norm{u}{L^2(\O)}\right).
\end{align}
\end{theorem}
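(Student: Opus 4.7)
My plan is to organise the argument around the auxiliary unknown $z(t,x):=\int_0^t y(s,x)\,ds$, so that $z_t=y$, $z(0,\cdot)\equiv 0$, and \eqref{wave_mem} becomes the local-in-time coupled system
\begin{align*}
y_{tt}-y_{xx}+Mz_{xx}=\mathbf{1}_{\omega(t)}u,\qquad z_t=y,
\end{align*}
with initial data $(y,y_t,z)|_{t=0}=(y^0,y^1,0)$. Before anything else I would check that the zero-mean hypothesis \eqref{eq:m0} is consistent with the $\Lp/\Hp$ setting: integrating \eqref{wave_mem} over $\mathbb{T}$ with periodic boundary conditions yields $\frac{d^2}{dt^2}\int_{\mathbb{T}}y\,dx\equiv 0$, and since $y^0,y^1$ have zero mean, so does $y(t,\cdot)$ for every $t\in[0,T]$.

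The core of the proof is an a priori energy estimate for sufficiently smooth solutions. Testing the first equation against $y_t$ in $\Lp$ and using periodicity together with the identity $y_{xt}=z_{xtt}$, the memory contribution rewrites as
\begin{align*}
M\int_{\mathbb{T}}z_{xx}\,y_t\,dx=-M\int_{\mathbb{T}}z_x z_{xtt}\,dx=-M\frac{d}{dt}\langle z_x,y_x\rangle_{\Lp}+M\|y_x\|_{\Lp}^2,
\end{align*}
which yields
\begin{align*}
\frac{d}{dt}\left[\tfrac{1}{2}\|y_t\|_{\Lp}^2+\tfrac{1}{2}\|y_x\|_{\Lp}^2-M\langle z_x,y_x\rangle_{\Lp}\right]=-M\|y_x\|_{\Lp}^2+\langle\mathbf{1}_{\omega(t)}u,y_t\rangle_{\Lp}.
\end{align*}
Augmenting the bracketed quantity by $K\|z_x\|_{\Lp}^2$ with $K>M^2$ produces a functional $\widetilde E(t)$ comparable to $\|y_t\|_{\Lp}^2+\|y_x\|_{\Lp}^2+\|z_x\|_{\Lp}^2$ whose derivative is bounded by $C(\widetilde E(t)+\|u(t,\cdot)\|_{\Lp}^2)$, and Gr\"onwall's lemma then produces \eqref{norm_est}.

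For existence I would use a Galerkin scheme in the Fourier basis $\{e^{ikx}\}_{k\in\ZZ\setminus\{0\}}$: each mode $k$ satisfies a constant-coefficient linear three-dimensional ODE for $(y_k,\dot y_k,z_k)$ with characteristic polynomial $\lambda^3+k^2\lambda-Mk^2$ (precisely the spectral object analysed in detail in Section \ref{spectrum_sect}), so the truncated problem is globally and uniquely solvable. The uniform-in-$N$ bound furnished by the a priori estimate allows extraction of a weak limit as $N\to\infty$, and the limit inherits the claimed time regularity by standard density/continuity arguments. Uniqueness is an immediate consequence of the same energy estimate applied to the difference of two solutions with identical data. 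The only genuinely non-routine point is that the memory term is not a compact perturbation of the wave operator and destroys the usual conservation of wave energy; the modified functional $\widetilde E$, whose derivation rests on the algebraic identity $y_{xt}=z_{xtt}$, is precisely what compensates for this and allows the a priori bound to close.
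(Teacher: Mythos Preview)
Your proposal is correct and follows a genuinely different route from the paper. The paper treats the memory term as a source for the standard wave equation and runs a contraction-mapping argument: on $C([0,T];\Hp)\cap C^1([0,T];\Lp)$ equipped with a weighted norm $\|e^{-\alpha t}\cdot\|$, the map $\widetilde y\mapsto \widehat y$ obtained by freezing the memory at $\int_0^t\widetilde y_{xx}$ is shown to be a strict contraction for $\alpha$ large enough; existence and uniqueness then come from Banach's fixed point theorem, and \eqref{norm_est} follows from the resulting integral inequality and Gronwall. Your argument is instead a direct energy method: the identity $z_{xt}=y_x$ lets you absorb the non-compact memory contribution into the total derivative of the modified functional $\widetilde E=\tfrac12\|y_t\|^2+\tfrac12\|y_x\|^2-M\langle z_x,y_x\rangle+K\|z_x\|^2$, which for $K>M^2$ is equivalent to the natural energy and satisfies a Gronwall-type differential inequality. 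Existence is then supplied by the Fourier--Galerkin scheme whose modal ODEs have precisely the characteristic polynomial $\lambda^3+k^2\lambda-Mk^2$ analysed in Section~\ref{spectrum_sect}. The paper's approach is more modular---it only needs classical wave well-posedness as a black box---whereas yours is more structural: it identifies the ``correct'' conserved-up-to-lower-order quantity for the memory equation and ties the linear algebra of each mode directly to the spectral theory developed later.
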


\begin{proof}
The proof is almost standard, employing a classical fixed point argument. Denote
\begin{align*}
    \zed:= C([0,T];\Hp)\cap C^1([0,T];\Lp),
\end{align*}
with the following norm
\begin{align*}
    \norm{y}{\zed}:=\left(\norm{e^{-\alpha t}y}{C([0,T];\Hp)}^2 + \norm{e^{-\alpha t}y_t}{C^1([0,T];\Lp)}^2\right)^{\frac 12},
\end{align*}
where $\alpha$ is a positive real number whose value will be given below. Clearly,
\begin{align*}
    e^{-\alpha T}\norm{y}{C([0,T];\Hp)\cap C^1([0,T];\Lp)}\leq\norm{y}{\zed}\leq \norm{y}{C([0,T];\Hp)\cap C^1([0,T];\Lp)}.
\end{align*}

Therefore, $\zed$ is a Banach space with the norm $\norm{\cdot}{\zed}$ and $\zed$ equals $C([0,T];\Hp)\cap C^1([0,T];\Lp)$ algebraically and topologically. Define the map
\begin{align*}
    \mathcal F:\zed&\rightarrow \zed
    \\
    \widetilde{y}&\mapsto \widehat{y},
\end{align*}
where $\hat{y}$ is the solution to \eqref{wave_mem} with $\int_0^t y_{xx}(s)\,{\rm ds}$ being replaced by $\int_0^t \tilde{y}_{xx}(s)\,{\rm ds}$:

\begin{align}\label{wave_mem_fixed}
	\begin{cases}
		\displaystyle \hat{y}_{tt}(t,x)-\hat{y}_{xx}(t,x) =- M\int_0^t \tilde{y}_{xx}(s,x)\,{\rm ds} + \mathbf{1}_{\omega(t)}u(t,x), & (t,x)\in Q
		\\
		\hat{y}(0,x)=y^0(x),\;\;\hat{y}_t(0,x)=y^1(x), & x\in\mathbb{T}.
	\end{cases}
\end{align}

Equation \eqref{wave_mem_fixed} represents the classical wave equations with a non-homogeneous term belonging to $L^2(0,T;\Hs{-1})$ (since $u$ verifies \eqref{eq:m0}). It has a unique solution $\widehat{y}\in C([0,T];\Hp)\cap C^1([0,T];\Lp)$ such that
\begin{align*}
    \norm{\widehat{y}}{\zed} &\leq \norm{\widehat{y}}{C([0,T];\Hp)\cap C^1([0,T];\Lp)}
    \\
    &\leq \mathcal C\left[\norm{(y^0,y^1)}{\Hp\times\Lp} + \norm{u}{L^2(\O)} + \norm{M\int_0^t \widetilde{y}_{xx}(s)\,{\rm ds}}{H_0^1([0,T];\Hs{-1})}\right]
    \\
    &= \mathcal C\left[\norm{(y^0,y^1)}{\Hp\times\Lp} + \norm{u}{L^2(\O)} + |M|\left(\int_0^T\norm{\widetilde{y}_{xx}(t)}{\Hs{-1}}^2\,{\rm dt}\right)^{\frac 12}\right]
    \\
    &= \mathcal C\left[\norm{(y^0,y^1)}{\Hp\times\Lp} + \norm{u}{L^2(\O)} + |M|\norm{\widetilde{y}}{L^2([0,T];\Hp)}\right],
\end{align*}
where $\mathcal C$ is a positive constant depending only on $T$. Hence, $\mathcal F(\zed)\subset\zed$. Next, given $\ttilde{y}\in\zed$, let $\widehat{\widehat{y}}=\mathcal F(\,\ttilde{y}\,)$ be the corresponding solution to \eqref{wave_mem}. Using the above estimate we have
\begin{align*}
    e^{-\alpha t}& \left(\norm{\mathcal F(\,\widetilde{y}\,)(t)-\mathcal F(\,\ttilde{y}\,)(t)}{\Hp} + \norm{\mathcal F(\,\widetilde{y}\,)_t(t)-\mathcal F(\,\ttilde{y}\,)_t(t)}{\Lp}\right)
    \\
    &\leq e^{-\alpha t}\norm{\widehat{y}-\hat{\widehat{y}}}{C([0,t];\Hp)\cap C^1([0,t];\Lp)}
    \\
    &\leq \mathcal C|M|e^{-\alpha t}\left(\int_0^t \norm{\widetilde{y}(s)-\ttilde{y}(s)}{\Hp}^2\,{\rm ds}\right)^{\frac 12}
    \\
    &= \mathcal C|M|\left(\int_0^t e^{-2\alpha (t-s)}\norm{e^{-\alpha s}\left(\widetilde{y}(s)-\ttilde{y}(s)\right)}{\Hp}^2\,{\rm ds}\right)^{\frac 12}
    \\
    &\leq \mathcal C|M|\left(\int_0^t e^{-2\alpha (t-s)}\,{\rm ds}\right)^{\frac 12} \norm{\widetilde{y}-\ttilde{y}}{\zed} = \mathcal C|M|\left(\frac{1-e^{-2\alpha t}}{2\alpha}\right)^{\frac 12} \norm{\widetilde{y}-\ttilde{y}}{\zed}.
\end{align*}
This implies
\begin{align*}
    \norm{\mathcal F(\,\widetilde{y}\,)-\mathcal F(\,\ttilde{y}\,)}{\zed} \leq \frac{\mathcal C|M|}{\sqrt{2\alpha}} \norm{\widetilde{y}-\ttilde{y}}{\zed}.
\end{align*}
Hence, taking $\alpha=2\mathcal C^2M^2$ we obtain
\begin{align*}
    \norm{\mathcal F(\,\widetilde{y}\,)-\mathcal F(\,\ttilde{y}\,)}{\zed} \leq \frac 12 \norm{\widetilde{y}-\ttilde{y}}{\zed},
\end{align*}
i.e. $\mathcal F$ is a contractive map. Therefore, it admits a unique fixed point, which is the solution to \eqref{wave_mem}. Let now $y$ be this unique solution to \eqref{wave_mem}. We have that
\begin{align*}
	\norm{y(t)}{\Hp}^2 + \norm{y_t(t)}{\Lp}^2 \leq\, & \mathcal C\left(\norm{(y^0,y^1)}{\Hp\times\Lp}^2 + \norm{u}{L^2(\O)}^2 + |M|\int_0^t\norm{y(s)}{\Lp}^2\,{\rm ds}\right)
	\\
	\leq\, & \mathcal C\left(\norm{(y^0,y^1)}{\Hp\times\Lp}^2 + \norm{u}{L^2(\O)}^2\right) 
	\\
	&+ \mathcal C|M|\int_0^t\left(\norm{y(s)}{\Hp}^2+\norm{y_s(s)}{\Lp}^2\,{\rm ds}\right).
\end{align*}
Thus, using Gronwall's inequality we get
\begin{align*}
    \norm{y(t)}{\Hp}^2 + \norm{y_t(t)}{\Lp}^2 \leq \mathcal C\left(1 + \mathcal C|M|e^{\mathcal C|M|t}\right)\left(\norm{(y^0,y^1)}{\Hp\times\Lp}^2 + \norm{u}{L^2(\O)}^2\right),
\end{align*}
which gives \eqref{norm_est} and finishes the proof.
\end{proof}

We conclude this section by introducing our main controllability result. We start by recalling that, in the setting of problems with memory terms, the classical notion of controllability, which requires that $y(T,x)=y_t(T,x)=0$, is not completely accurate. Indeed, in order to guarantee that the dynamics can reach the equilibrium, also the memory term has to be taken into account. In particular, it is necessary that also the memory reaches the null value, that is,
\begin{align}\label{eq:in0}
    \int_0^T y_{xx}(s,x)\,{\rm ds} = 0.
\end{align}

If, instead, we do not pay attention to turn off the accumulated memory, i.e. if \eqref{eq:in0} does not hold, 
then the solution $y$ will not stay at the rest after time $T$ as $t$ evolves. Hence, the correct notion of controllability in this framework is given by the following definition (see, e.g., \cite[Definition 1.1]{lu2017null})
\begin{definition}\label{control_def}
Given $0\leq\sigma <\infty$, system \eqref{wave_mem} is said to be memory-type null controllable at time $T$ if for any couple of initial data $(y^0,y^1)\in\Hs{\sg+1}\times\Hs{\sg}$, there exits a control $u\in L^2(\mathcal O)$ verifying \eqref{eq:m0} such that the corresponding solution $y$ satisfies
\begin{align}\label{mem_control}
    y(T,x) = y_t(T,x) = \int_0^T y_{xx}(s,x)\,{\rm ds} = 0,\qquad x\in\mathbb{T}.
\end{align}
\end{definition}
With this definition in mind, the main result of the present work will be the following.
\begin{theorem}\label{control_thm}
Let $M\neq 0$, $c\in\RR\setminus\{-1,0,1\}$, $T>2\pi\left(\frac{1}{|c|}+\frac{1}{|1-c|}+\frac{1}{|1+c|}\right)$, $\omega_0$ a nonempty open set in $\mathbb{T}$ and
\begin{align*}
    \omega(t) = \omega_0-ct,\qquad t\in [0,T].
\end{align*}
For each initial data $(y^0,y^1)\in H^3_p(\mathbb{T})\times H^2_p(\mathbb{T})$ there exists a control $u\in L^2({\mathcal O})$ verifying \eqref{eq:m0} such that the solution $(y,y_t)$ of \eqref{wave_mem} satisfies \eqref{mem_control}.
\end{theorem}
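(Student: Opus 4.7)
The plan is the classical duality/moment method, where the hypothesis $c\notin\{-1,0,1\}$ serves to separate the three families of exponentials arising from the spectrum of the dynamics. Definition \ref{control_def} imposes three terminal constraints on $y(T)$, $y_t(T)$ and $\int_0^T y_{xx}(s,\cdot)\,ds$; via the duality developed in Section \ref{cv_sect}, they translate into moment identities of the form
\begin{equation*}
\iint_{\mathcal O} u(t,x)\,\overline{\varphi(t,x)}\,dx\,dt = \Lambda(y^0,y^1;\varphi),
\end{equation*}
where $\varphi$ runs over the solutions of the backward adjoint system (loaded with three pieces of data, one per terminal condition) and $\Lambda$ is an explicit antilinear functional of the initial data.

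Next I would diagonalize in $x$ using the Fourier basis $\{e^{ikx}\}_{k\in\ZZ\setminus\{0\}}$ of $\Lp$ (the mode $k=0$ is killed by \eqref{eq:m0}). Each mode of $\varphi$ then solves a scalar integro-differential equation whose characteristic polynomial is $\lambda^3 + k^2\lambda - Mk^2 = 0$, producing three roots $\{\lambda_k^j\}_{j=1,2,3}$. The asymptotic analysis of Section \ref{spectrum_sect} yields two hyperbolic branches $\lambda_k^{\pm}=\pm ik + O(1)$ and one memory-like branch $\lambda_k^3$ which remains bounded as $|k|\to\infty$. Because the cut-off $\mathbf{1}_{\omega_0-ct}(x)$ restricts $x$ to $\xi-ct$ with $\xi\in\omega_0$, the complex exponential $e^{ikx}$ contributes a factor $e^{-ikct}$ in time; this shifts the effective frequencies of the moment problem to
\begin{equation*}
\mu_k^j := \lambda_k^j - ikc, \qquad k\in\ZZ\setminus\{0\},\; j=1,2,3,
\end{equation*}
whose three asymptotic densities are $|1-c|^{-1}$, $|1+c|^{-1}$ and $|c|^{-1}$, which are exactly the quantities appearing in the threshold for $T$.

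For $T>2\pi\bigl(|c|^{-1}+|1-c|^{-1}+|1+c|^{-1}\bigr)$, the total upper density of $\{\mu_k^j\}$ is strictly smaller than $T/(2\pi)$, which is the Paley--Wiener / Ingham regime in which a biorthogonal family $\{\theta_k^j\}\subset L^2(0,T)$ to $\{e^{\mu_k^j t}\}$ can be constructed as in Section \ref{bio_sec} via an explicit Weierstrass-type entire multiplier whose zeros are placed on $\{\mu_k^j\}$. The control is then assembled in the form
\begin{equation*}
u(t,x) = \sum_{k,j} c_k^j\,\theta_k^j(t)\,\eta_k^j(x+ct),
\end{equation*}
with spatial profiles $\eta_k^j$ supported in $\omega_0$, tuned to enforce \eqref{eq:m0} and to diagonalize the pairing with the adjoint eigenmodes; the scalar coefficients $c_k^j$ are read off from $\Lambda$. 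Convergence of this series in $L^2(\mathcal O)$ is where the regularity $(y^0,y^1)\in\Hs{3}\times\Hs{2}$ enters: the two extra derivatives relative to the natural energy space are needed to absorb the polynomial-in-$|k|$ growth of $\|\theta_k^j\|_{L^2(0,T)}$.

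The main obstacle I anticipate is quantifying $\|\theta_k^j\|_{L^2(0,T)}$ with a sharp enough polynomial rate. Each family taken separately satisfies a clean Ingham-type gap, but cross-gaps $|\mu_k^j-\mu_{k'}^{j'}|$ between different branches can become arbitrarily small for exceptional pairs $(k,k')$; the hypothesis $c\notin\{-1,0,1\}$ is precisely what excludes asymptotic resonance and forces these cross-gaps to be bounded below outside a finite set of indices (the latter being handled by a finite-dimensional separate argument). Producing an entire multiplier whose exponential type along the real axis is compatible with $T$ and whose pointwise values at $\{\mu_k^j\}$ decay only polynomially then requires a careful complex-analytic construction rather than a plain Ingham inequality, and this is the technical heart of Sections \ref{bio_sec}--\ref{control_sect}.
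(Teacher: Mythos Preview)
Your overall strategy---duality, change of variables $x\mapsto x+ct$, moment problem in the shifted frequencies, construction of a biorthogonal via a Weierstrass product, and absorption of the biorthogonal growth by the extra regularity---is exactly the route taken in the paper. However, there is a genuine misconception in your spectral analysis that would derail the argument if carried out as you describe.

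You write that $c\notin\{-1,0,1\}$ ``forces these cross-gaps to be bounded below outside a finite set of indices (the latter being handled by a finite-dimensional separate argument)''. This is false. What $c\notin\{-1,0,1\}$ buys is only the absence of an \emph{accumulation point} in the spectrum (see \eqref{eq:lic}); it does \emph{not} produce a uniform lower bound on the cross-gaps. Lemma~\ref{lemma.dist23} shows that for every large $m$ there is an index $n_m$ in another branch with
\[
\frac{\gamma'(c,\epsilon)}{m^2}\ \le\ \bigl|\lambda_m^j-\lambda_{n_m}^{k}\bigr|\ \le\ \frac{|1-c|}{2}+3\epsilon,
\]
and the Remark following it shows that for values of $c$ with $\frac{1+c}{|1-c|}$ well approximable by rationals the upper bound is also $O(m^{-2})$ infinitely often. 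Hence the asymptotic gap is zero for a dense set of velocities, and your ``finite-dimensional separate argument'' cannot close the gap.

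This is precisely what governs the result: because pairs of eigenvalues come within $O(m^{-2})$ of each other for infinitely many $m$, the derivative of the canonical product satisfies only $|P'(-i\overline{\lambda}_m^j)|\ge C/m^2$ (Theorem~\ref{te:pprod}\,(3)), so the biorthogonal elements obey $\|\theta_m^k\|_{L^2}\le Cm^2$ (Theorem~\ref{te:bio}), and the observability inequality \eqref{eq:insum} carries the weight $n^{-4}$. It is this $n^{-4}$, coming from an \emph{infinite} family of near-collisions, that forces the loss of two derivatives to $\Hs{3}\times\Hs{2}$---not a bounded correction from finitely many bad indices. A related subtlety you do not mention: for $c\in\mathcal V$ (see \eqref{spaceV}) there is an honestly double eigenvalue $\lambda_{-n_c}^2=\lambda_{n_c}^3$, which the paper handles by the redefinition \eqref{eq:condob} before building the biorthogonal and by a separate term in the final estimate \eqref{ineq2}.
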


The proof of Theorem \ref{control_thm}, which will be given in Section \ref{control_sect}, will be based on the moment method and on a careful analysis of the spectrum associated to our equation.

\section{Characterization of the control problem: the adjoint system}\label{cv_sect}

This section is devoted to a characterization of the control problem by means of the adjoint equation associated to \eqref{wave_mem}, which is given by the following system
\begin{align}\label{wave_mem_adj}
    \begin{cases}
        \displaystyle p_{tt}(t,x)-p_{xx}(t,x) + M\int_t^T p_{xx}(s,x)\,{\rm ds} + M q_{xx}^0(x) = 0, & (t,x)\in Q
        \\
        p(T,x)=p^0(x),\;\;p_t(T,x)=p^1(x), & x\in\mathbb{T}.
    \end{cases}
\end{align}

The term $Mq^0_{xx}$ takes into account the presence of the memory in \eqref{wave_mem} and the fact that, according to Definition \ref{control_def}, the controllability of our original equation is really reached only if also this memory is driven to zero. We have the following result.

\begin{lemma}\label{control_id_lemma}
Let $0\leq\sigma <\infty$. The equation \eqref{wave_mem} is memory-type null controllable in time $T$ if and only if, for each initial data $(y^0,y^1)\in\Hs{\sg+1}\times\Hs{\sg}$, there exits a control $u\in L^2(\mathcal O)$ verifying \eqref{eq:m0} such that the following identity holds for any $(p^0,p^1,q^0)\in\Lp\times\Hs{-1}\times\Lp$, 
\begin{align}\label{control_id}
    \int_0^T\int_{\omega(t)} u(t,x)\bar{p}(t,x)\,{\rm dx\,dt} = \big\langle y^0(\cdot),p_t(0,\cdot)\big\rangle_{\Hs{1},\Hs{-1}} - \int_{\mathbb{T}} y^1(x)\bar{p}(0,x)\,{\rm dx},
\end{align}
where $p$ is the unique solution to the adjoint system \eqref{wave_mem_adj}.
\end{lemma}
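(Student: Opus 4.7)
The plan is the standard transposition argument: I would multiply \eqref{wave_mem} by $\bar p$, where $p$ solves the adjoint system \eqref{wave_mem_adj}, integrate over $Q$, and rearrange by integration by parts (together with a Fubini swap for the memory term) to obtain a ``master identity'' that reduces to \eqref{control_id} precisely when the extra final-time terms vanish, i.e.\ precisely when \eqref{mem_control} holds.

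I would first carry the computation out for smooth adjoint data $(p^0,p^1,q^0)$ and correspondingly regular initial data, so that every IBP and every Fubini step is pointwise licit, and then pass to the limit in the resulting identity by density. This limit step rests on a backward-in-time well-posedness result for \eqref{wave_mem_adj} analogous to Theorem \ref{wp_thm}: treating $Mq^0_{xx}$ as a time-independent source and reversing time, \eqref{wave_mem_adj} becomes a forward memory wave equation, whose solution depends continuously on $(p^0,p^1,q^0)\in\Lp\times\Hs{-1}\times\Lp$ with values in $C([0,T];\Lp)\cap C^1([0,T];\Hs{-1})$.

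The core computation has three pieces. Two IBPs in $t$ on $\int_0^T\!\int_{\mathbb T} y_{tt}\bar p\,dx\,dt$ produce the boundary contributions $\int_{\mathbb T}[y_t\bar p - y\bar p_t]_0^T\,dx$ and the bulk term $\int_0^T\!\int_{\mathbb T} y\,\bar p_{tt}\,dx\,dt$. Two IBPs in $x$ on $\int_0^T\!\int_{\mathbb T} y_{xx}\bar p\,dx\,dt$ give $\int_0^T\!\int_{\mathbb T} y\,\bar p_{xx}\,dx\,dt$ with no boundary contribution, thanks to periodicity. The memory term is rewritten by Fubini,
\begin{align*}
\int_0^T \bar p(t,x)\int_0^t y_{xx}(s,x)\,ds\,dt = \int_0^T y_{xx}(s,x)\int_s^T \bar p(t,x)\,dt\,ds,
\end{align*}
followed by a further double IBP in $x$, turning it into $\int_0^T\!\int_{\mathbb T} y(t,x)\,M\!\int_t^T \bar p_{xx}(s,x)\,ds\,dx\,dt$. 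Collecting these three bulk terms, the integrand tested against $y$ equals $\bar p_{tt}-\bar p_{xx}+M\int_t^T \bar p_{xx}(s)\,ds$, which by \eqref{wave_mem_adj} equals $-M\bar q^0_{xx}$. One last double IBP in $x$ rewrites this as $-M\int_{\mathbb T}\bar q^0(x)\int_0^T y_{xx}(s,x)\,ds\,dx$, yielding the master identity
\begin{align*}
\int_0^T\!\!\int_{\omega(t)} u\bar p\,dx\,dt =\;& \big\langle y^0, \bar p_t(0)\big\rangle_{\Hs{1},\Hs{-1}} - \int_{\mathbb T} y^1\, \bar p(0)\,dx \\
& + \int_{\mathbb T} y_t(T)\,\bar p^0\,dx - \big\langle y(T), \bar p^1\big\rangle_{\Hs{1},\Hs{-1}} \\
& - M\int_{\mathbb T} \bar q^0(x)\int_0^T y_{xx}(s,x)\,ds\,dx.
\end{align*}

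Both directions of the lemma follow at once. If $(y,u)$ achieves memory-type null controllability, then $y(T)=y_t(T)=0$ and $\int_0^T y_{xx}(s,\cdot)\,ds=0$ cancel the last two lines above and \eqref{control_id} drops out. Conversely, if \eqref{control_id} holds for every $(p^0,p^1,q^0)\in\Lp\times\Hs{-1}\times\Lp$, those three extra terms must sum to zero for every such triple, and choosing two of the three data to vanish at a time successively forces $y_t(T)=0$ in $\Lp$, $y(T)=0$ in $\Hs{1}$, and $\int_0^T y_{xx}(s,\cdot)\,ds=0$ in the corresponding dual sense. The only delicate point is the bookkeeping of the proper duality pairings when the adjoint data lie at their minimal regularity, which is precisely the reason for the preliminary smooth--then--density step; beyond that I expect no genuine obstacle.
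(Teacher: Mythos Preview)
Your proposal is correct and follows essentially the same route as the paper: multiply \eqref{wave_mem} by $\bar p$, integrate by parts in $t$ and $x$, and handle the memory term to arrive at the same master identity \eqref{control_id_prel}, from which both implications follow immediately by testing against arbitrary $(p^0,p^1,q^0)$. The only cosmetic difference is that the paper treats the memory term by writing $\bar p(t,x)=-\frac{d}{dt}\big(\int_t^T\bar p(s,x)\,ds+\bar q^{\,0}(x)\big)$ and integrating by parts in $t$, whereas you swap the order of the time integrals via Fubini; these are equivalent elementary manipulations yielding the same identity.
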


\begin{proof}
We start by remarking that equation \eqref{wave_mem_adj} has a solution in the energy space associated with the initial data, which will be studied in Lemma \ref{sol_adj_lemma}. We multiply \eqref{wave_mem} by $\bar{p}$ and we integrate by parts on $Q$. Taking into account the boundary conditions, we get
\begin{align*}
    \int_0^T\!\!\int_{\omega(t)} u(t,x)\bar{p}(t,x)\,{\rm dx\,dt} =&\, \int_Q\left(y_{tt}(t,x)-y_{xx}(t,x) + M\int_0^t y_{xx}(s,x)\,ds\right)\bar{p}(t,x)\,{\rm dx\,dt}
    \\
    =&\,\int_{\mathbb{T}} \Big(y_t(t,x)\bar{p}(t,x)-y(t,x)\bar{p}_t(t,x)\Big)\Big|_0^T\,{\rm dx} 
    \\
    &+ \int_Q \!y(t,x)\Big(\bar{p}_{tt}(t,x)-\bar{p}_{xx}(t,x)\Big)\,{\rm dx\,dt}
    \\
    &+ M\int_0^T\int_{\mathbb{T}}\left(\int_0^t y_{xx}(s,x)\,ds\right)\bar{p}(t,x)\,{\rm dx\,dt}
\end{align*}
Moreover, we can compute
\begin{align*}
    M\int_0^T\int_{\mathbb{T}} & \left(\int_0^t y_{xx}(s,x)\,ds\right)\bar{p}(t,x)\,{\rm dx\,dt}
    \\
    =&\, -M\int_0^T\int_{\mathbb{T}} \left(\int_0^t y_{xx}(s,x)\,{\rm ds}\right)\frac{{\rm d}}{{\rm dt}}\left(\int_t^T \bar{p}(s,x)\,{\rm ds} + \bar{q}^{\,0}(x)\right)\,{\rm dx\,dt}
    \\
    =&\, -M\int_{\mathbb{T}} \left(\int_0^T y_{xx}(s,x)\,{\rm ds}\right)\bar{q}^{\,0}(x)\,dx + M\int_0^T\int_{\mathbb{T}} y(t,x)\left(\int_t^T \bar{p}_{xx}(s,x)\,{\rm ds} + \bar{q}^{\,0}_{xx}(x)\right)\,{\rm dx\,dt}.
\end{align*}
Summarizing, and taking into account the regularity properties of the solution $y$, we obtain
\begin{align}\label{control_id_prel}
	\int_0^T \int_{\omega(t)} u(t,x)\bar{p}(t,x)\,{\rm dx\,dt} =&\,\int_{\mathbb{T}} y_t(T,x)\bar{p}^{\,0}(x)\,{\rm dx} -\big\langle y(T,\cdot),p^1(\cdot)\big\rangle_{\Hs{1},\Hs{-1}} \notag 
	\\
	& - \int_{\mathbb{T}} y^1(x)\bar{p}(0,x)\,{\rm dx} + \big\langle y^0(\cdot),p_t(0,\cdot)\big\rangle_{\Hs{1},\Hs{-1}} \notag
	\\
	&- M\int_0^T \left\langle y_{xx}(t,\cdot),q^0(\cdot)\right\rangle_{\Hs{-1},\Hs{1}}\,{\rm dt}.
\end{align}
Let us now assume that \eqref{control_id} holds. Then, \eqref{control_id_prel} is rewritten as
\begin{align*}
    \int_{\mathbb{T}} y_t(T,x)\bar{p}^{\,0}(x)\,{\rm dx} -\big\langle y(T,\cdot),p^1(\cdot)\big\rangle_{\Hs{1},\Hs{-1}} -M\int_0^T \left\langle y_{xx}(t,\cdot),q^0(\cdot)\right\rangle_{\Hs{-1},\Hs{1}}\,{\rm dt}=0.
\end{align*}

Since the above identity has to be verified  for all $(p^0,p^1,q^0)\in\Lp\times\Hs{-1}\times\Lp$, we immediately conclude that \eqref{mem_control} holds. 

On the other hand, let us assume that \eqref{wave_mem} is memory-type null controllable, according to Definition \ref{control_def}. Then, from \eqref{control_id_prel} we obtain \eqref{control_id}. The proof is then concluded.
\end{proof}

In addition, in order to prove our controllability result, we will introduce a suitable change of variables, which will allow us to work in a framework in which the control region is fixed (that is, it does not depend on the time variable).

Before doing that, let us observe that both \eqref{wave_mem} and \eqref{wave_mem_adj} may be rewritten as systems coupling a PDE and an ODE. In more detail, we have that the they are equivalent, respectively, to
\begin{align}\label{wave_mem_syst}
    \begin{cases}
        y_{tt}(t,x) - y_{xx}(t,x) +Mz(t,x) = \mathbf{1}_{\omega(t)}u(t,x), &(t,x)\in Q
        \\
        z_t(t,x) = y_{xx}(t,x), &(t,x)\in Q
        \\
        y(0,x) = y^0(x),\;\; y_t(0,x) = y^1(x), & x\in\mathbb{T}
        \\
        z(0,x) = 0, & x\in\mathbb{T},
    \end{cases}
\end{align}
and
\begin{align}\label{wave_mem_adj_syst}
    \begin{cases}
        p_{tt}(t,x) - p_{xx}(t,x) + Mq_{xx}(t,x) = 0, &(t,x)\in Q
        \\
        -q_t(t,x) = p(t,x), &(t,x)\in Q
        \\
        p(T,x) = p^0(x),\;\; y_t(T,x) = p^1(x), & x\in\mathbb{T}
        \\
        q(T,x) = q^0(x), & x\in\mathbb{T}.
    \end{cases}
\end{align}
Let us now consider the change of variables
\begin{align}\label{cv}
	x\mapsto x+ct =: x',
\end{align}
and define
\begin{align*}
    \xi(t,x'):=y(t,x),\;\;\; \zeta(t,x'):=z(t,x),\;\;\; \varphi(t,x'):=p(t,x),\;\;\; \psi(t,x'):=q(t,x).
\end{align*}

The parameter $c$ in \eqref{cv} is a constant velocity which belongs to $\mathbb{R}\setminus\{-1,0,1\}$. It is simply a matter of computations to show that, from \eqref{wave_mem_syst} and \eqref{wave_mem_adj_syst}, after having denoted $x'=x$ with some abuse of notation, we obtain respectively
\begin{align}\label{wave_mem_syst_cv}
    \begin{cases}
        \xi_{tt}(t,x) - (1-c^2)\xi_{xx}(t,x) + 2c\xi_{xt}(t,x) + M\zeta(t,x) = \mathbf{1}_{\omega_0}\tilde{u}(t,x), &(t,x)\in Q
        \\
        \zeta_t(t,x) + c\zeta_x(t,x) = \xi_{xx}(t,x), &(t,x)\in Q
        \\
        \xi(0,x) = y^0(x),\;\; \xi_t(0,x) = y^1(x)-cy^0_x(x), & x\in\mathbb{T}
        \\
        \zeta(0,x) = 0, & x\in\mathbb{T},
\end{cases}
\end{align}
and
\begin{align}\label{wave_mem_adj_syst_cv}
    \begin{cases}
        \varphi_{tt}(t,x) - (1-c^2)\varphi_{xx}(t,x) + 2c\varphi_{xt}(t,x) + M\psi_{xx}(t,x) = 0, &(t,x)\in Q
        \\
        -\psi_t(t,x) - c\psi_x(t,x) = \varphi(t,x), &(t,x)\in Q
        \\
        \varphi(T,x) = p^0(x),\;\; \varphi_t(T,x) = p^1(x)-cp^0_x(x), & x\in\mathbb{T}
        \\
        \psi(T,x) = q^0(x), & x\in\mathbb{T}.
    \end{cases}
\end{align}

Furthermore, in analogy to Lemma \ref{control_id_lemma} we have the following result of characterization of our control problem

\begin{lemma}\label{control_id_lemma_syst}
Let $0\leq\sigma<\infty$. The equation \eqref{wave_mem_syst_cv} is memory-type null controllable in time $T$ if and only if, for each initial data $(y^0,y^1)\in\Hs{\sg+1}\times\Hs{\sg}$, there exits a control $\tilde{u}\in L^2(Q)$ verifying 
\begin{equation}\label{eq:m01}
\int_{-\pi}^\pi \mathbf{1}_{\omega_0}\tilde{u}(t,x)\,{\rm dx}=0\qquad t\in (0,T),
\end{equation}such that the following identity holds for any $(p^0,p^1,q^0)\in\Lp\times\Hs{-1}\times\Lp$, 
\begin{align}\label{control_id_cv}
    \int_0^T\int_{\omega_0} \tilde{u}(t,x)\bar{\varphi}(t,x)\,{\rm dx\,dt }= \big\langle y^0(\cdot),\varphi_t(0,\cdot)\big\rangle_{\Hs{1},\Hs{-1}} - \int_{\mathbb{T}} \big(y^1(x)+cy^0_x(x)\big)\bar{\varphi}(0,x)\,{\rm dx},
\end{align}
where $(\varphi,\psi)$ is the unique solution to \eqref{wave_mem_adj_syst_cv}.
\end{lemma}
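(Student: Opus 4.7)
The lemma is the exact analog of Lemma \ref{control_id_lemma} in the moving frame. The cleanest route is to deduce it from Lemma \ref{control_id_lemma} by performing the change of variables \eqref{cv} in the identity \eqref{control_id}, rather than redoing the integration-by-parts computation on \eqref{wave_mem_syst_cv} tested against $\bar\varphi$ (which would require separately handling the mixed derivative $2c\xi_{xt}$ and the adjoint transport term $c\psi_x$).

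For each fixed $t$, the map $x\mapsto x+ct$ is a rotation of $\mathbb T$ preserving Lebesgue measure and periodicity, and mapping $\omega(t)$ bijectively onto $\omega_0$. With $\tilde u(t,x):=u(t,x-ct)$, it establishes a one-to-one correspondence between pairs $(y,u)$ solving \eqref{wave_mem_syst} and pairs $(\xi,\tilde u)$ solving \eqref{wave_mem_syst_cv}, and likewise between the adjoint pairs $(p,q)$ and $(\varphi,\psi)$. Under this correspondence memory-type null controllability of one system, with control satisfying \eqref{eq:m0}, is equivalent to that of the other, with control satisfying \eqref{eq:m01}.

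It then remains to check that the right-hand side of \eqref{control_id} transforms into the right-hand side of \eqref{control_id_cv}. Invariance of Lebesgue measure under the rotation gives $\int_0^T\int_{\omega(t)} u\bar p\,{\rm dx\,dt}=\int_0^T\int_{\omega_0}\tilde u\bar\varphi\,{\rm dx\,dt}$. Since $p(t,x)=\varphi(t,x+ct)$, one has $p(0,x)=\varphi(0,x)$ and
\begin{equation*}
p_t(0,x)=\varphi_t(0,x)+c\,\varphi_x(0,x),
\end{equation*}
so $\int_{\mathbb T} y^1\bar p(0)\,{\rm dx}=\int_{\mathbb T} y^1\bar\varphi(0)\,{\rm dx}$ and $\langle y^0,p_t(0)\rangle_{\Hs{1},\Hs{-1}}=\langle y^0,\varphi_t(0)\rangle_{\Hs{1},\Hs{-1}}+c\langle y^0,\varphi_x(0)\rangle_{\Hs{1},\Hs{-1}}$. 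The most delicate point is the last pairing: $\varphi(0)$ is only in $\Lp$, so $\varphi_x(0)\in\Hpd$, and the identity $\langle y^0,\varphi_x(0)\rangle_{\Hs{1},\Hs{-1}}=-\int_{\mathbb T} y^0_x\bar\varphi(0)\,{\rm dx}$ must be justified by density, using that the adjoint is well-posed in $\Lp\times\Hpd\times\Lp$ (the content of the forthcoming Lemma \ref{sol_adj_lemma}). Substituting these relations into \eqref{control_id} then produces exactly \eqref{control_id_cv}, which finishes the proof.
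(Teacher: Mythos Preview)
Your argument is correct. The paper's own ``proof'' consists of the single sentence that it is ``totally analogous to the one of Lemma \ref{control_id_lemma}, and we leave it to the reader,'' meaning the intended approach is to repeat the integration-by-parts computation of Lemma \ref{control_id_lemma} directly on the system \eqref{wave_mem_syst_cv} tested against the solution of \eqref{wave_mem_adj_syst_cv}.

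You take a genuinely different route: rather than redoing that computation (with the extra terms $2c\xi_{xt}$ and $c\psi_x$ to track), you transport the already-proved identity \eqref{control_id} through the change of variables \eqref{cv}. This is cleaner, since the only nontrivial step is the transformation of $p_t(0,\cdot)$ into $\varphi_t(0,\cdot)+c\varphi_x(0,\cdot)$, followed by the integration-by-parts identity $\langle y^0,\varphi_x(0)\rangle_{\Hp,\Hpd}=-\int_{\mathbb T} y^0_x\bar\varphi(0)\,{\rm dx}$. One minor remark: that last identity is simply the definition of $\varphi_x(0)\in\Hpd$ as a distributional derivative on the torus when $\varphi(0)\in\Lp$, so the appeal to density and to Lemma \ref{sol_adj_lemma} is not really needed there. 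What the paper's approach buys is self-containment (no reference back to the original variables), while yours avoids repeating a page of computation; both are valid.
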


\begin{proof}
The proof is totally analogous to the one of Lemma \ref{control_id_lemma}, and we leave it to the reader.
\end{proof}

\section{Spectral analysis}\label{spectrum_sect}

This section is devoted to a spectral analysis of the differential operators corresponding to our adjoint equations, which will be fundamental in the proof of the controllability result. We start by focusing firstly on the original system \eqref{wave_mem_adj_syst} and, in a second moment, we will consider \eqref{wave_mem_adj_syst_cv}.

\subsection{Spectral analysis of \eqref{wave_mem_adj_syst}}

First of all, observe that equation \eqref{wave_mem_adj_syst} can be equivalently written as a system of first order PDEs in the following way
\begin{align}\label{eq:memsys}
    \begin{cases}
        Y'(t) + \mathcal A Y = 0, & t\in(0,T)
        \\
        Y(0)=Y^0,
    \end{cases}
\end{align}
where $Y=\left(\begin{array}{c}
        p
        \\
        r
        \\
        q\end{array}\right)$,
$Y^0=\left(\begin{array}{c}
    p^0
    \\
    r^0
    \\
    q^0\end{array}\right)$ and the unbounded operator ${\mathcal A}: D({\mathcal A})\rightarrow
X_{-\sigma}$ is defined by
\begin{align*}
\mathcal A\left(\begin{array}{c}
    p
    \\
    r
    \\
    q\end{array}\right)=\left(\begin{array}{c}
        -r
        \\
        -p_{xx}+Mq_{xx}
        \\
        p\end{array}\right).
\end{align*}
The spaces $X_{-\sigma}$ and $D({\mathcal A})$ are given respectively by
\begin{align}\label{eq:xsmi}
&X_{-\sigma}=\Hs{-\sg}\times \Hs{-\sg-1}\times \Hs{-\sg},
\\ \label{eq:dsmi}
&D({\mathcal A})=\left\{\left(\begin{array}{c}
    p
    \\
    r
    \\
    q\end{array}\right)\in X{-\sigma}\,\left.\, \right| \,\, p-Mq\in  \Hs{-\sg-1}\right\} .
\end{align}

\begin{theorem}\label{eigen_mu_thm}
The spectrum of the operator $(D({\mathcal A}),\mathcal A)$ is given by
\begin{align}\label{eq:spec}
    \sigma(\mathcal A)=\left(\mu_n^j\right)_{n\in\NN^*,\, 1\leq j\leq 3}\cup \{M\},
\end{align}
where the eigenvalues $\mu_n^j$ verify the following estimates
\begin{align}\label{eq:eig}
    \begin{cases}
        \displaystyle \mu_n^1 = M - \frac{M^3}{n^2} + \mathcal O\left(\frac{1}{n^4}\right), & n\in\NN^*
        \\[15pt]
        \displaystyle \mu_n^2 = -\frac{\mu_n^1}{2} + i\,\sqrt{3\left(\frac{\mu_n^1}{2}\right)^2+n^2} =- \frac M2 +\frac{M^3}{2n^2} + in + i\,\frac{3M^2}{8n} + \mathcal O\left(\frac{1}{n^3}\right), & n\in\NN^*
        \\[15pt]
        \displaystyle \mu_n^3 = \bar{\mu}^2_n, & n\in\NN^*.
    \end{cases}
\end{align}
Each eigenvalue $\mu_n^j\in \sigma({\mathcal A})$ is double and has two associated eigenvectors  \begin{align}\label{eq:eigf}
    \Phi_{\pm n}^j = \left(\begin{array}{c}
        1
        \\
        -\mu_n^j
        \\ [5pt]
        \displaystyle\frac{1}{\mu_n^j}\end{array}\right) e^{\pm inx}, \qquad j\in\{1,2,3\},\,\, n\in\NN^*.
    \end{align}
\end{theorem}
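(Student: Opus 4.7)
My plan is to exploit the Fourier structure on the torus to reduce the eigenvalue problem $\mathcal{A}\Phi = \mu\Phi$ to a countable family of $3\times 3$ algebraic problems. Writing $\Phi = (p,r,q)^T$ componentwise, the first and third equations $-r = \mu p$ and $p = \mu q$ determine $r$ and $q$ in terms of $p$ whenever $\mu \neq 0$, and the middle equation $-p_{xx}+Mq_{xx} = -\mu^2 p$ collapses to the scalar constraint $p_{xx}(\mu-M)/\mu = \mu^2 p$. Seeking $p$ proportional to $e^{\pm inx}$ with $n\in\mathbb{N}^*$ (the zero mode being excluded by the $\Lp$ setting) and using $p_{xx} = -n^2 p$ then yields the cubic characteristic equation
\begin{equation*}
\mu^3 + n^2\mu - n^2 M = 0, \qquad n\in\mathbb{N}^*,
\end{equation*}
together with the stated form of the eigenvectors $\Phi_{\pm n}^j$.

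For the asymptotic expansions I would proceed in two stages. The discriminant of this cubic is $-4n^6 - 27n^4 M^2 < 0$, so for every $n\in\mathbb{N}^*$ there is exactly one real root $\mu_n^1$ and two complex conjugate roots. Writing $\mu_n^1 = M + \varepsilon_n$ and substituting gives $\varepsilon_n(n^2 + 3M^2 + 3M\varepsilon_n + \varepsilon_n^2) = -M^3$, from which $\varepsilon_n = -M^3/n^2 + \mathcal{O}(1/n^4)$. For the remaining two roots, Vieta's relations $\mu_n^1 + \mu_n^2 + \mu_n^3 = 0$ and $\mu_n^1\mu_n^2\mu_n^3 = n^2 M$ show that $\mu_n^{2,3}$ solve $t^2 + \mu_n^1 t + n^2 M/\mu_n^1 = 0$. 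The key algebraic simplification is that, dividing the cubic itself by $\mu_n^1$, one obtains the exact identity $n^2 M/\mu_n^1 = (\mu_n^1)^2 + n^2$; the discriminant of the quadratic then reduces to $-(3(\mu_n^1/2)^2 + n^2)$, giving the closed form
\begin{equation*}
\mu_n^{2,3} = -\frac{\mu_n^1}{2} \pm i\sqrt{3(\mu_n^1/2)^2 + n^2},
\end{equation*}
and a Taylor expansion of the square root combined with the asymptotic of $\mu_n^1$ delivers the announced expansion for $\mu_n^2$; the identity $\mu_n^3 = \overline{\mu_n^2}$ follows by reality of the coefficients.

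Each $\mu_n^j$ is at least double because the cubic depends only on $n^2$, so the modes $e^{+inx}$ and $e^{-inx}$ produce two linearly independent eigenvectors; the negative discriminant computed above ensures the three roots stay distinct, so the geometric multiplicity is exactly two. To complete the spectral identification, I would observe that $\mu_n^1\to M$, which forces $M$ into $\sigma(\mathcal{A})$ as an accumulation point of eigenvalues; conversely, outside the listed set the resolvent $(\mathcal{A}-\mu I)^{-1}$ can be assembled mode by mode, the determinant of the $3\times 3$ matrix on mode $n$ being precisely $-(\mu^3 + n^2\mu - n^2 M)$ and remaining bounded away from zero uniformly in $n$. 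I expect the main technical obstacle to lie in this last uniformity step in the weighted scale $X_{-\sigma}$: one must justify why the unusual domain condition $p - Mq \in \Hs{-\sg-1}$ appearing in \eqref{eq:dsmi} is exactly what absorbs the singular behavior of the resolvent as $\mu \to M$, so that $M$ appears as a genuine spectral value while not carrying an associated eigenfunction.
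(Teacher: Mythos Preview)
Your proposal is correct and follows essentially the same route as the paper: reduce $\mathcal A\Phi=\mu\Phi$ via the Fourier modes $e^{\pm inx}$ to the cubic $\mu^3+n^2\mu-n^2M=0$, then analyze its roots. The only cosmetic difference is that the paper extracts the complex roots by writing $\mu=\alpha+i\beta$ and equating real and imaginary parts (finding that $-2\alpha$ solves the real cubic), whereas you reach the same quadratic via Vieta's relations and the identity $n^2M/\mu_n^1=(\mu_n^1)^2+n^2$; your added resolvent discussion for the full spectral identification is something the paper leaves implicit.
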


\begin{proof}
From the equality
\begin{align*}
    \mathcal A\left(\begin{array}{c}
        \phi^1
        \\
        \phi^2
        \\
        \phi^3
    \end{array}\right) = \left(\begin{array}{c}
        -\phi^2
        \\
        -\phi^1_{xx} + M\phi^3_{xx}
        \\
        \phi^1
    \end{array}\right) =\mu\left(\begin{array}{c}
        \phi^1
        \\
        \phi^2
        \\
        \phi^3
    \end{array}\right),
\end{align*}
we deduce that
\begin{align*}
    \begin{cases}
        \phi^2 = -\mu\phi^1
        \\
        \phi^3 = \mu^{-1}\phi^1
        \\
        -(\mu-M)\phi^1_{xx} = -\mu^3\phi^1
        \\
        \phi^1\in \Hs{2+\sg}.
    \end{cases}
\end{align*}
Plugging the first two equations from the above system in the third one, we immediately get
\begin{align*}
    -\phi^1_{xx} = \left(-\frac{\mu^3}{\mu-M}\right)\phi^1.
\end{align*}
Consequently, $\phi^1$ takes the form
\begin{align*}
    \phi^1(x) = e^{inx},\;\;\; n\in\ZZ^*,
\end{align*}
and $\mu$ is an eigenvalue of the operator $\mathcal A$ corresponding to $\phi^1$ if and only if verifies the characteristic equation
\begin{align}\label{eq:lambda}
    \mu^3+n^2\mu-Mn^2 = 0.
\end{align}

It is easy to see that, for each $n\geq 1$, \eqref{eq:lambda} has a unique real root located between 0 and $M$. This purely real root will be denoted $\mu^1_n$. 
Since $\mu_n^1$ verifies
\begin{align*}
    \mu_n^1 = M-\frac{M(\mu_n^1)^2}{(\mu_n^1)^2+n^2},
\end{align*}
it follows that
\begin{align}\label{eq:lambdanp}
    \mu_n^1 = M - \frac{M^3}{n^2} + \mathcal O\left(\frac{1}{n^4}\right), \qquad n\geq 1.
\end{align}

Thus, in particular, we have that $M$ is an accumulation point in the spectrum and belongs to the essential part of the spectrum.

We pass to analyze the complex roots of the characteristic equation \eqref{eq:lambda}. Let us consider that $\mu=\alpha+i\beta$ with $\alpha,\beta\in\mathbb{R}$ and $\beta\neq 0$. Plugging this value in \eqref{eq:lambda} and setting both the real an the imaginary part to zero, we have that the following relations hold
\begin{align*}
    \begin{cases}
        \beta^2=3\alpha^2+n^2
        \\
        -8\alpha^3-2\alpha n^2-Mn^2=0.
    \end{cases}
\end{align*}
Hence, $-2\alpha$ is the real solution of the characteristic equation \eqref{eq:lambda}, and we deduce that
\begin{align}
    \begin{cases}
        \displaystyle \mu_n^{2,3} = \alpha_n^{2,3}+i\beta_n^{2,3}, & n\in\NN^*
        \\[7pt]
        \displaystyle \alpha_n^2 = \alpha^3_n = -\frac{\mu_n^1}{2}, & n\in\NN^*
        \\[7pt]
        \displaystyle \beta_n^{2} = \sqrt{3\left(\frac{\mu_n^1}{2}\right)^2+n^2},\quad  \beta_n^3=-\sqrt{3\left(\frac{\mu_n^1}{2}\right)^2+n^2}, & n\in\NN^*.
    \end{cases}
\end{align}

Therefore, we have identified the three families of eigenvalues $\mu_n^j$, $n\in\NN^*$, $j\in\{1,2,3\}$, which behave as in \eqref{eq:eig} (see also Figure \ref{figure_mu}). $M$ is a limit point of the first family $(\mu_n^1)_{n\geq 1}$ and forms the essential part of the spectrum.

\begin{figure}[h]
    \centering
    \includegraphics[scale=0.5]{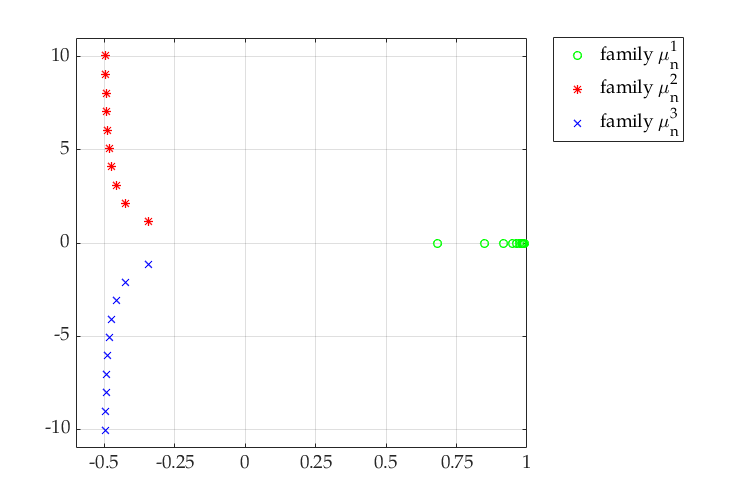}
    \caption{Distribution  of the eigenvalues $\mu_n^j$ for $n\in\{1,\ldots,10\}$, corresponding to $M=1$. The accumulation of the family $\mu^1_n$ zeros at $M$ appears.}\label{figure_mu}
\end{figure}

To each eigenvalue $\mu_n^j$, $j\in\{1,2,3\}$, $n\in\mathbb{N}^*$, correspond two independent eigenfunctions $\Phi^j_{\pm n}$ given
by \eqref{eq:eigf}. \end{proof}

We conclude this section with the following result of the eigenvalues of \eqref{wave_mem_adj_syst}, which will be then needed for the analysis of the spectrum of \eqref{wave_mem_adj_syst_cv}. 

\begin{lemma}\label{lemma:sir} 
The sequence of real numbers $\left(|\mu_n^1|\right)_{n\geq 1}$ is increasing and verifies
\begin{equation}\label{eq:red}
	\frac{|M|}{M^2+1}\leq \left|\mu_n^1\right|<|M|, \qquad n\geq 1.
\end{equation}
Moreover, the sequence $\left(\frac{|\mu_n^1|}{n}\right)_{n\geq 1}$ is decreasing.
\end{lemma}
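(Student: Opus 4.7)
The plan is to reduce everything to the case $M>0$ and then exploit the characteristic equation \eqref{eq:lambda} directly. Note first that if $M<0$ and $\mu_n^1\in(M,0)$ is the real root, then $\nu_n:=-\mu_n^1>0$ satisfies $\nu_n^3+n^2\nu_n-|M|n^2=0$, i.e.\ it is the analogue of $\mu_n^1$ for the parameter $|M|>0$. Since the statement only involves $|\mu_n^1|$ and $|M|$, we may and shall assume throughout that $M>0$, so that $\mu_n^1\in(0,M)$ and $|\mu_n^1|=\mu_n^1$.

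For the monotonicity of $(|\mu_n^1|)_{n\ge 1}$, I would rewrite \eqref{eq:lambda} as $n^2=h(\mu_n^1)$, where
\begin{equation*}
h(\mu):=\frac{\mu^3}{M-\mu},\qquad \mu\in(0,M).
\end{equation*}
A direct computation gives $h'(\mu)=\mu^2(3M-2\mu)/(M-\mu)^2>0$ on $(0,M)$, so $h$ is a strictly increasing bijection $(0,M)\to(0,\infty)$. Consequently $\mu_n^1=h^{-1}(n^2)$ is strictly increasing in $n$, and automatically $\mu_n^1<M=|M|$ for all $n\ge 1$, which gives the upper bound in \eqref{eq:red}.

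For the lower bound, the characteristic equation rewritten as $\mu_n^1\bigl((\mu_n^1)^2+n^2\bigr)=Mn^2$ yields
\begin{equation*}
\mu_n^1=\frac{Mn^2}{(\mu_n^1)^2+n^2}.
\end{equation*}
Using $(\mu_n^1)^2<M^2$ and then the just-established monotonicity, for every $n\ge 1$,
\begin{equation*}
\mu_n^1\ge \mu_1^1=\frac{M}{(\mu_1^1)^2+1}>\frac{M}{M^2+1},
\end{equation*}
which is the desired inequality $|M|/(M^2+1)\le |\mu_n^1|$.

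Finally, for the decay of $|\mu_n^1|/n$, I would use the equivalent form $n^2(M-\mu_n^1)=(\mu_n^1)^3$ of the characteristic equation. Dividing by $n^2(\mu_n^1)^2$ gives
\begin{equation*}
\left(\frac{\mu_n^1}{n}\right)^{2}=\frac{M-\mu_n^1}{\mu_n^1}=\frac{M}{\mu_n^1}-1.
\end{equation*}
The right-hand side is a strictly decreasing function of $\mu_n^1\in(0,M)$, and we have just shown that $\mu_n^1$ is strictly increasing in $n$; hence $(\mu_n^1/n)^2$, and therefore $\mu_n^1/n=|\mu_n^1|/n$, is strictly decreasing. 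No step in this argument seems genuinely delicate once the two equivalent reformulations $n^2=h(\mu_n^1)$ and $(\mu_n^1/n)^2=M/\mu_n^1-1$ of the characteristic equation are identified; these reformulations are the key point.
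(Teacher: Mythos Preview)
Your proof is correct. The route differs from the paper's in organization and in the algebraic devices used, and it is worth recording the contrast.

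The paper proves the three claims essentially independently: the lower bound in \eqref{eq:red} is obtained directly for each $n$ from $\frac{|\mu_n^1|}{|M|}=\frac{n^2}{(\mu_n^1)^2+n^2}\geq\frac{n^2}{M^2+n^2}\geq\frac{1}{M^2+1}$; the decrease of $|\mu_n^1|/n$ is shown by contradiction (assuming $|\mu_n^1|/n\le|\mu_{n+1}^1|/(n+1)$ forces $|M|/n\le|M|/(n+1)$); and the increase of $|\mu_n^1|$ is obtained by subtracting the characteristic equations at $n$ and $n+1$. Your argument instead hinges on two reformulations of \eqref{eq:lambda}: writing $n^2=h(\mu_n^1)$ with $h(\mu)=\mu^3/(M-\mu)$ strictly increasing immediately gives the monotonicity of $\mu_n^1$, and the identity $(\mu_n^1/n)^2=M/\mu_n^1-1$ then yields the decrease of $\mu_n^1/n$ as a direct consequence of that monotonicity, with the lower bound likewise reduced to the case $n=1$. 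This produces a tighter logical chain (one monotonicity feeds the other two claims) and avoids both the contradiction step and the telescoping difference computation; the paper's approach, on the other hand, gives the lower bound without first needing monotonicity.
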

\begin{proof} 
We recall that $|\mu_n^1|\in(0,|M|)$ is the unique real solution of the equation 
\begin{equation}\label{eq:mumod} 
	\mu^3+n^2\mu-n^2|M|=0, 
\end{equation}
which implies that
\begin{align*}
	\frac{|\mu_{n}^1|}{|M|}=\frac{n^2}{(\mu_{n}^1)^2+n^2}\geq \frac{n^2}{M^2+n^2} \geq \frac{1}{M^2+1},
\end{align*}
and consequently \eqref{eq:red} holds true. Moreover, if $\frac{|\mu_n^1|}{n}\leq \frac{|\mu_{n+1}^1|}{n+1}$, we have that
\begin{align*}
	\frac{|M|}{n}=\frac{|\mu_n^1|^3}{n^3} +\frac{|\mu_n^1|}{n}\leq  \frac{|\mu_{n+1}^1|^3}{(n+1)^3} +\frac{|\mu_{n+1}^1|}{n+1}=\frac{|M|}{n+1},
\end{align*}
which is a contradiction. Hence, the sequence $\left(\frac{|\mu_n^1|}{n}\right)_{n\geq 1}$ is decreasing. Finally, by resting the relations \eqref{eq:mumod} for $|\mu_n^1|$ and $|\mu_{n+1}^1|$ we obtain that
\begin{align*}
	\left(|\mu_{n+1}^1|-|\mu_n^1|\right)\left(|\mu_{n+1}^1|^2+|\mu_{n}^1|^2+|\mu_{n+1}^1||\mu_{n}^1|+(n+1)^2\right)
	=(2n+1)\left(|M|-|\mu_n^1|\right)
\end{align*}
From the last relation and the fact that $|\mu_n^1|\in(0,|M|)$ we deduce that
\begin{align*}
	|\mu_{n+1}^1|>|\mu_n^1|,
\end{align*}
which proves that the sequence  $\left(|\mu_n^1|\right)_{n\geq 1}$ is increasing.
\end{proof}

\begin{remark} The spectral properties from Theorem \ref{eigen_mu_thm} has some immediate interesting consequences:
\begin{enumerate}

\item Since there are eigenvalues with positive real part, \eqref{eq:memsys} is not dissipative. However, since all the real parts of the eigenvalues are uniformly bounded, the well-posedness of the equation is ensured.

\item The existence of a family of eigenvalues having an accumulation point implies that our initial equation \eqref{wave_mem} cannot be controlled with a fixed support control $\omega_0$, unless in the trivial case $\omega_0=\mathbb{T}$.

\end{enumerate}
\end{remark}
\subsection{Spectral analysis of \eqref{wave_mem_adj_syst_cv}}
We now move to the spectral analysis for \eqref{wave_mem_adj_syst_cv}. We recall that $c$ is a parameter which belongs to  $\mathbb{R}\setminus\{-1,0,1\}$. Similarly to the previous section, equation \eqref{wave_mem_adj_syst_cv} can be equivalently written as a system of order one in the following way
\begin{align}\label{eq:memchanged}
    \begin{cases}
        W'(t) + \mathcal A_cW=0, & t\in (0,T)
        \\
        W(T)=W^0,
    \end{cases}
\end{align}
where $W=\left(\begin{array}{c}
    \varphi\\\eta\\\psi\end{array}\right)$,
$W^0=\left(\begin{array}{c} \varphi^0\\\eta^0\\\psi^0\end{array}\right)$ and the unbounded operator ${\mathcal A}_c: D({\mathcal A})\rightarrow X_{-\sigma}$ is defined by
\begin{align*}
    \mathcal A_c\left(\begin{array}{c}
        \varphi
        \\
        \eta
        \\
        \psi\end{array}\right)=\left(\begin{array}{c}
            -\eta
            \\
            -(1-c^2)\varphi_{xx}+2c\eta_x+M\psi_{xx}
            \\
            c\psi_x+\varphi\end{array}\right).
\end{align*}

The domain $D({\mathcal A}_c)$ is the same as $D({\mathcal A})$, $D({\mathcal A}_c)=\Hs{-\sg+1}\times \Hs{-\sg}\times \Hs{-\sg+1}.$ The following two results give the spectral properties of the operator ${\mathcal A}_c$.

\begin{theorem} Let $c\in \mathbb{R}\setminus\{-1,0,1\}$. The spectrum of the operator $(D(\mathcal A_c),\mathcal A_c)$ is given by
\begin{align}\label{eq:specac}
    \sigma(\mathcal A_c)=\left(\lambda_n^j\right)_{n\in\mathbb{Z}^*,\, 1\leq j\leq 3},
\end{align}
where the eigenvalues $\lambda_n^j$ are defined as follows
\begin{align}\label{eq:eigneq}
    \lambda_n^j=icn +\mu_{|n|}^j, \qquad  n\in\mathbb{Z}^*,\,\, 1\leq j\leq 3.
\end{align}
and $\mu_{|n|}^j$ are given in Theorem \ref{eigen_mu_thm}.
\end{theorem}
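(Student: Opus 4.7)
The plan is to diagonalize $\mathcal{A}_c$ via the Fourier basis and to reduce the resulting characteristic equation to the cubic \eqref{eq:lambda} already studied in Theorem \ref{eigen_mu_thm}. Because the operator has constant coefficients on the torus, it commutes with translations and is therefore diagonalized by $\{e^{inx}\}_{n\in\mathbb{Z}^*}$, where the mode $n=0$ is excluded by the zero-mean convention in $\Lp$. For each $n$ the eigenvalue problem thus reduces to a finite-dimensional $3\times 3$ block, so it suffices to compute the three roots of its characteristic polynomial and then argue that no additional spectrum appears.

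Writing $\mathcal{A}_c W=\lambda W$ component-wise with the Fourier ansatz $\varphi(x)=e^{inx}$, $\eta(x)=\delta e^{inx}$, $\psi(x)=\gamma e^{inx}$ for $n\in\mathbb{Z}^*$, the first equation gives $\delta=-\lambda$ and the third yields $(\lambda-icn)\gamma=1$, so $\gamma=(\lambda-icn)^{-1}$ (the denominator cannot vanish, since $\lambda=icn$ is incompatible with $M\neq 0$). Substituting both into the second equation leaves
\begin{align*}
	\bigl[\lambda^2-2icn\lambda+(1-c^2)n^2\bigr](\lambda-icn)-Mn^2=0.
\end{align*}
The crucial algebraic observation is the completion of the square
\begin{align*}
	\lambda^2-2icn\lambda+(1-c^2)n^2=(\lambda-icn)^2+n^2,
\end{align*}
which, upon setting $\mu:=\lambda-icn$, collapses the relation to $\mu^3+n^2\mu-Mn^2=0$. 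This is exactly \eqref{eq:lambda} with $n$ replaced by $|n|$ (the equation being even in $n$), whose three complex roots are $\mu_{|n|}^1,\mu_{|n|}^2,\mu_{|n|}^3$ by Theorem \ref{eigen_mu_thm}. Hence $\lambda=icn+\mu_{|n|}^j$, which is \eqref{eq:eigneq}.

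To close the argument I would establish the reverse inclusion by noting that the Fourier decomposition $X_{-\sigma}=\bigoplus_{n\in\mathbb{Z}^*}X_n$ is invariant under $\mathcal{A}_c$ and reduces it to the direct sum of the $3\times 3$ blocks just diagonalized; therefore $\sigma(\mathcal{A}_c)$ coincides with the closure of $\{\lambda_n^j:n\in\mathbb{Z}^*,\,1\leq j\leq 3\}$. Since $|\mu_{|n|}^j|$ stays bounded while $|cn|\to\infty$, this set has no finite accumulation point, so the closure equals the set itself. In particular the essential spectrum point $M$ present in $\sigma(\mathcal{A})$ disappears here, the shift $icn$ spreading the cluster at $M$ along the imaginary axis. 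The only real obstacle is the completion-of-squares identity above; once this is in place, everything else is routine bookkeeping built on top of Theorem \ref{eigen_mu_thm}.
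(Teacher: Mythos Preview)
Your proof is correct and follows essentially the same approach as the paper: Fourier reduction to a $3\times 3$ block for each $n\in\mathbb{Z}^*$, then the substitution $\mu=\lambda-icn$ to recover the cubic \eqref{eq:lambda} from Theorem~\ref{eigen_mu_thm}. Your explicit completion-of-squares identity and the closing paragraph on the reverse inclusion (ruling out additional spectrum via the absence of finite accumulation points) are welcome clarifications that the paper's own proof leaves implicit.
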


\begin{proof}
From the equality
\begin{align*}
    \mathcal A_c\left(\begin{array}{c}
    \phi^1
    \\
    \phi^2
    \\
    \phi^3
    \end{array}\right) =\lambda\left(\begin{array}{c}
    \phi^1
    \\
    \phi^2
    \\
    \phi^3
    \end{array}\right),
\end{align*}
we deduce that
\begin{align*}
    \begin{cases}
    \phi^2 = -\lambda\phi^1
    \\
    \lambda\phi^3 = c\phi^3_x+\phi^1
    \\
    -(1-c^2)\phi^1_{xx} - 2c\lambda\phi^1_x + M\phi^3_{xx} = -\lambda^2\phi^1
    \\
    \phi^1\in \Hs{2+\sg}.
    \end{cases}
\end{align*}
To find the solutions of the above system we write
\begin{align*}
    \phi_j(x)=\sum_{n\in\mathbb{Z}^*}a_{nj}e^{inx},\quad 1\leq j\leq 3,
\end{align*}
and we deduce that the coefficients $a_{nj}$ verify
\begin{align*}
    \begin{cases}
        a_{n2}=-\lambda a_{n1}
        \\[15pt]
        \displaystyle a_{n3}=\frac{a_{n1}}{\lambda-cn i}
        \\[15pt]
        \displaystyle \left[(1-c^2)n^2 -2c\lambda n i -\frac{Mn^2}{\lambda - cn i}\right]a_{n1}=-\lambda^2a_{n1}.
    \end{cases}
\end{align*}
Therefore, $\lambda$ is an eigenvalues of ${\mathcal A}_c$ if and only if verifies the equation
\begin{align}\label{eq:lanew}
    (\lambda-icn)^3+n^2(\lambda-icn)-M^2n^2=0.
\end{align}

By taking into account that $\mu_{|n|}^j$ are the zeros of \eqref{eq:lambda}, we obtain that the eigenvalues of the operator ${\mathcal A}_c$ are given by \eqref{eq:eigneq}.
\end{proof}
\begin{figure}[h]
    \centering
    \includegraphics[scale=0.44]{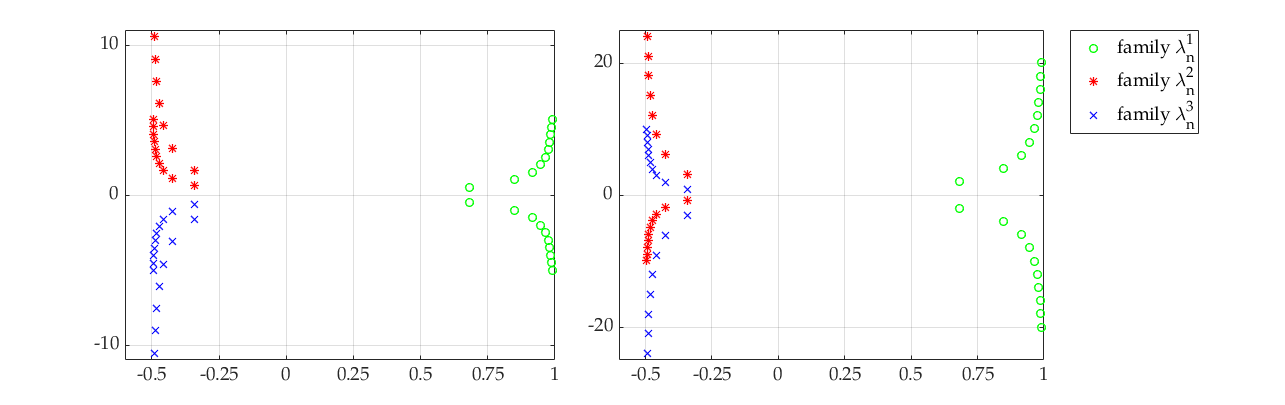}
    \caption{Distribution  of the eigenvalues $\lambda_n^j$ for $n\in\{1,\ldots,10\}$, corresponding to $M=1$ and $c=0.5$ (left) and $c=2$ (right).}\label{figure_lambda}
\end{figure}

\begin{remark} Unlike the operator $(D(\mathcal{A}),\mathcal A)$ from Theorem \ref{eigen_mu_thm}, the operator $(D(\mathcal A_c),\mathcal A_c)$ has a spectrum which contains only eigenvalues of finite multiplicities, if $c\in \mathbb{R}\setminus\{-1,0,1\}$. Notice that, if $c=\pm1$, then there exists an accumulation point in the spectrum of the operator $(D(\mathcal A_c),\mathcal A_c)$. Indeed, from \eqref{eq:eigneq} we deduce that 
\begin{align}\label{eq:lic}
	\begin{cases}
		\displaystyle \lim_{n\rightarrow \infty} \lambda^2_{-n}=\lim_{n\rightarrow \infty} \lambda^3_{n}=-\frac{M}{2}, & \mbox{ if }c=1
		\\[10pt]
		\displaystyle \lim_{n\rightarrow \infty} \lambda^2_{n}=\lim_{n\rightarrow \infty} \lambda^3_{-n}=-\frac{M}{2}, & \mbox{ if }c=-1.
	\end{cases}
\end{align}

As mentioned before, this implies that our initial equation \eqref{wave_mem} cannot be controlled with a control support of the form $\omega(t)=\omega_0 \pm t$, unless in the trivial case $\omega_0=\mathbb{T}$.
\end{remark}

\begin{theorem} \label{te:lari}Each eigenvalue $\lambda_n^j\in \sigma(\mathcal A_c)$
has an associated eigenvector of the form
\begin{align}\label{eq:eigfnew}
    \Psi_{n}^{j}=\left(\begin{array}{c}
                    1
                    \\[5pt]
                    -\lambda_n^j
                    \\[5pt]
                    \displaystyle\frac{1}{\lambda_n^j-icn}\end{array}\right)e^{inx},\qquad
    j\in\{1,2,3\},\,\, n\in\mathbb{Z}^*.
\end{align}
For any $\sigma\geq 0$, the set $\left(|n|^\sigma\Psi^{j}_n\right)_{n\in\mathbb{Z}^*,\,1\leq j\leq 3}$  forms a Riesz basis in the spaces $X_{-\sigma}$.
\end{theorem}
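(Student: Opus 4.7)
The argument splits cleanly into two parts: checking the explicit eigenvector formula, and then establishing the Riesz basis property by a Fourier-mode analysis.

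First, I would verify \eqref{eq:eigfnew} by direct substitution of $\Psi_n^j$ into $\mathcal A_c \Psi = \lambda_n^j \Psi$, exploiting throughout the relation $\lambda_n^j - icn = \mu_{|n|}^j$ inherited from \eqref{eq:eigneq}. The first and third component identities reduce immediately; for the second one, using $(\lambda_n^j)^2 - 2icn\lambda_n^j = (\mu_{|n|}^j)^2 + c^2 n^2$, one is led to
\[
    [n^2 + (\mu_{|n|}^j)^2]\,\mu_{|n|}^j = M n^2,
\]
which is precisely the characteristic equation \eqref{eq:lambda} satisfied by $\mu_{|n|}^j$, so the identity holds.

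For the Riesz basis property, the key observation is that $X_{-\sigma}$ admits the orthogonal Fourier decomposition $X_{-\sigma} = \bigoplus_{n \in \mathbb{Z}^*} X_n$, where $X_n$ is the three-dimensional subspace of triples whose components are all proportional to $e^{inx}$. Identifying $X_n$ with $\mathbb{C}^3$ via $(a_1, a_2, a_3) \mapsto (a_1 e^{inx}, a_2 e^{inx}, a_3 e^{inx})$, the induced norm reads
\[
    \|(a_1, a_2, a_3)\|_{X_n}^2 = |n|^{-2\sigma}|a_1|^2 + |n|^{-2(\sigma+1)}|a_2|^2 + |n|^{-2\sigma}|a_3|^2,
\]
so that $(|n|^\sigma, 0, 0)$, $(0, |n|^{\sigma+1}, 0)$, $(0, 0, |n|^\sigma)$ is orthonormal in $X_n$. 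Since each vector $|n|^\sigma \Psi_n^j$ lies entirely in $X_n$, the Riesz basis property decouples orthogonally across $n$ and amounts to showing that the $3 \times 3$ change-of-basis matrix
\[
    V_n = \begin{pmatrix} 1 & 1 & 1 \\ -\lambda_n^1/|n| & -\lambda_n^2/|n| & -\lambda_n^3/|n| \\ 1/\mu_{|n|}^1 & 1/\mu_{|n|}^2 & 1/\mu_{|n|}^3 \end{pmatrix}
\]
is bounded and invertible with bounds uniform in $n \in \mathbb{Z}^*$.

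The upper bound $\sup_n \|V_n\| < \infty$ follows at once from \eqref{eq:eig} and Lemma \ref{lemma:sir}: writing $\lambda_n^j/|n| = ic\,\mathrm{sgn}(n) + \mu_{|n|}^j/|n|$, each entry of $V_n$ is bounded since $\mu_{|n|}^j/|n|$ is bounded, while $1/|\mu_{|n|}^j|$ is bounded for $j = 1$ by Lemma \ref{lemma:sir} and for $j = 2, 3$ by the linear growth $|\mu_{|n|}^j| \sim |n|$. The main obstacle is the uniform lower bound on $|\det V_n|$. The plan here is to compute the limit as $|n| \to \infty$, which with $\varepsilon := \mathrm{sgn}(n)$ gives
\[
    V_n \longrightarrow V_\infty^\varepsilon = \begin{pmatrix} 1 & 1 & 1 \\ -i\varepsilon c & -i\varepsilon(c + \varepsilon) & -i\varepsilon(c - \varepsilon) \\ 1/M & 0 & 0 \end{pmatrix},
\]
and expansion along the third row yields $\det V_\infty^\varepsilon = 2i\varepsilon^2/M = 2i/M \neq 0$. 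This provides the lower bound for $|n|$ large; for the finitely many remaining small indices, $\det V_n \neq 0$ follows from the pairwise distinctness of $\lambda_n^1, \lambda_n^2, \lambda_n^3$, which makes the corresponding eigenvectors linearly independent. A compactness argument then combines these into a uniform lower bound. The hypothesis $c \in \mathbb{R}\setminus\{-1, 0, 1\}$ is crucial at the level of the preceding theorem, where it guarantees that the spectrum of $\mathcal A_c$ consists exactly of the isolated eigenvalues $\lambda_n^j$ with no accumulation point (cf.\ \eqref{eq:lic}), and hence that the eigenvector family $\{\Psi_n^j\}$ is complete in $X_{-\sigma}$.
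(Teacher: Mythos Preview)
Your proof is correct and follows essentially the same route as the paper: reduce to the three-dimensional Fourier blocks $X_n$, express the eigenvectors via the $3\times 3$ matrix $V_n$ (the paper's $B_n$ up to a sign in the second row), and obtain uniform frame bounds by passing to the limit matrix as $|n|\to\infty$. The only cosmetic difference is that you control $\det V_n$ directly (finding $\det V_\infty=2i/M$), whereas the paper works with $B_n^*B_n\to\widetilde B$ and computes $\det\widetilde B=6/M^2$; your closing remark that $c\notin\{-1,0,1\}$ is needed for completeness is slightly off, since the invertibility of each $V_n$ already yields completeness regardless of $c$.
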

\begin{proof}
It is easy to see that to each eigenvalue $\lambda_n^j$, $j\in\{1,2,3\}$, $n\in\mathbb{Z}^*$, corresponds an eigenfunction $\Psi^j_{n}$ given
by \eqref{eq:eigfnew}. Let us show that $\left(|n|^\sigma\Psi^{j}_n\right)_{n\in\mathbb{Z}^*,\,1\leq j\leq 3}$  form a Riesz basis in the space $X_{-\sigma}$.  Firstly, we remark that $\left(\Psi^{j}_n\right)_{n\in\mathbb{Z}^*,\,1\leq j\leq 3}$ is complete in $X_{-\sigma}$. This follows from the fact that, given an arbitrary element in  $X_{-\sigma}$,
\begin{align*}
    \left(\begin{array}{c}
        y^0
        \\[2pt]
        w^0
        \\[2pt]
        z^0
    \end{array}\right) = \sum_{n\in\mathbb{Z}^*}\left(\begin{array}{c}
            \beta_n^1
            \\[2pt]
            \beta_n^2
            \\[2pt]
            \beta_n^3
        \end{array}\right)|n|^\sigma e^{inx},
\end{align*}
 there exists a unique sequence $(a_n^{j})_{n\in\mathbb{Z}^*,\,1\leq j\leq 3}$ such that
\begin{align*}
    \left(\begin{array}{c}
        y^0
        \\
        w^0
        \\
        z^0
    \end{array}\right) = \sum_{n\in\mathbb{Z}^*,\,1\leq j\leq 3}a_n^{j}|n|^\sigma\Psi_n^{j}.
\end{align*}
From \eqref{eq:eigfnew}, the last relation is equivalent to the system
\begin{align}\label{eq:sysla}
    \begin{cases}
        a_n^{1}+a_n^{2}+a_n^{3}=\beta_n^1, & n\in\mathbb{Z}^*
        \\[5pt]
        \lambda_{n}^1a_n^{1}+\lambda_{n}^2 a_n^{2}+\lambda_{n}^3 a_n^{3}=-\beta_n^2, & n\in\mathbb{Z}^*
        \\[5pt]
        \displaystyle\frac{1}{\lambda_n^1-icn}a_{n}^{1}+\frac{1}{\lambda_n^2-icn}a_{n}^{2}+\frac{1}{\lambda_n^3-icn}a_{n}^{3}=\beta_{n}^3, & n\in\mathbb{Z}^*.
    \end{cases}
\end{align}

The above system has, for each $n\in\mathbb{Z}^*$, a unique solution $\left(a_n^{j}\right)_{1\leq j\leq 3}$, since the determinant of its matrix does not vanish:
\begin{align*}
\det\left|\begin{array}{ccc}
	1& 1& 1
	\\[3pt]
	\lambda_{n}^1 &\lambda_{n}^2 &\lambda_{n}^3
	\\[3pt]
    \displaystyle \frac{1}{\lambda_n^1-icn} & \displaystyle \frac{1}{\lambda_n^2-icn} & \displaystyle \frac{1}{\lambda_n^3-icn}
    \end{array}\right| 
    = \frac{(\lambda_n^2-\lambda_n^1)(\lambda_n^3-\lambda_n^1)(\lambda_n^3-\lambda_n^2)}{(\lambda_n^1-icn)(\lambda_n^2-icn)(\lambda_n^3-icn)}\neq 0.
\end{align*}
To finish the proof of the fact that $\left(|n|^\sigma\Psi^{j}_n\right)_{n\in\mathbb{Z}^*,\,1\leq j\leq 3}$ forms a Riesz basis in $X_{-\sigma}$ we notice that

\begin{align}\label{eq:ri1cv}
    \norm{\sum_{n\in\mathbb{Z}^*,\,1\leq j\leq 3}a_n^j|n|^\sigma\Psi_n^j\,
}{X_{-\sigma}}^2 =& \, \sum_{n\in\mathbb{Z}^*}|n|^{2\sigma}\norm{a_n^1\Psi_n^{1}+a_n^2\Psi_n^{2}+a_n^3\Psi_n^{3}}{X_{-\sigma}}^2 \nonumber 
	\\
	=&\, 2\pi \sum_{n\in\mathbb{Z}^*}\Bigg[\left|a_n^1+a_n^2+a_n^3\right|^2 + 
\frac{1}{n^2}\left|\lambda_{n}^1a_n^1+\lambda_{n}^2a_n^2+
    \lambda_{n}^3a_n^3\right|^2 \nonumber 
    \\
    &+\Bigg|\frac{1}{\lambda_{n}^1-icn}a_n^1+\frac{1}{\lambda_{n}^2-icn}a_n^2+
    \frac{1}{\lambda_{n}^3-icn}a_n^3\Bigg|^2\,\Bigg] \nonumber
    \\
    =&\,2\pi \sum_{n\in\mathbb{Z}^*}\left\|B_n \left(\begin{array}{l}a_n^1\\a_n^2\\a_n^3\end{array}\right)\right\|_2^2,
\end{align}
where the matrix $B_n$ is given by
\begin{align}\label{eq.matrix2}
	B_n=\left(\begin{array}{ccc}
        1& 1& 1
        \\[5pt]
        \displaystyle \frac{\lambda_{n}^1}{|n|} &\displaystyle\frac{\lambda_{n}^2}{|n|}&\displaystyle\frac{\lambda_{n}^3}{|n|}
        \\[10pt]
        \displaystyle \frac{1}{\lambda_n^1-icn} &\displaystyle\frac{1}{\lambda_n^2-icn} &\displaystyle \frac{1}{\lambda_n^3-icn}
    \end{array}\right).
\end{align}
Moreover, we have the following result, whose proof will be given in a second moment.
\begin{lemma}\label{spectrum_B_lemma_cv} 
For each $n\in\mathbb{Z}^*$ let $B_n$ be the matrix defined by \eqref{eq.matrix2}. There exists two positive constants $\textfrak{a}_1$ and $\textfrak{a}_2$, independent of $n$, such that 
\begin{equation}\label{eq:specbn2}
	\sigma(B_n^*B_n)\subset [\textfrak{a}_1,\textfrak{a}_2],\qquad n\in\mathbb{Z}^*.
\end{equation}
\end{lemma}
\noindent Thus, from \eqref{eq:ri1cv} and \eqref{eq:specbn2} we have that the following inequalities hold
\begin{align}\label{eq:insp2}
    \textfrak{a}_1\left(|a_n^1|^2+|a_n^2|^2+|a_n^3|^2\right)\leq
    \norm{B_n\left(\begin{array}{l}a_n^1\\a_n^2\\a_n^3\end{array}\right)}{2}^2\leq
    \textfrak{a}_2\left(|a_n^1|^2+|a_n^2|^2+|a_n^3|^2\right),
\end{align}
which, together with \eqref{eq:ri1cv}, implies that
\begin{align}\label{eq:inri2}
	2\pi \textfrak{a}_1 \sum_{n\in\mathbb{Z}^*,\,1\leq j\leq 3}|a_n^j|^2\leq \norm{\sum_{n\in\mathbb{Z}^*,\,1\leq j\leq 3}a_n^j|n|^\sigma\Psi_n^j\,}{X_{-\sigma}}^2 &\leq 2\pi \textfrak{a}_2 \sum_{n\in\mathbb{Z}^*,\,1\leq j\leq 3}|a_n^j|^2.
\end{align}

Relation \eqref{eq:inri2} shows that $\left(|n|^\sigma\Psi_n^j\right)_{n\in\mathbb{Z}^*,\,1\leq j\leq 3}$ forms a Riesz basis in the space $X_{-\sigma}$ and the proof of Theorem \ref{te:lari} is complete.
\end{proof}

\begin{proof}[Proof of Lemma \ref{spectrum_B_lemma_cv}]
 Let us first remark that $B_n$ is not singular. Indeed as in the case of the matrix of system \eqref{eq:sysla}, we have that
\begin{align}
	\det(B_n)=\frac{(\lambda_n^2-\lambda_n^1)(\lambda_n^3-\lambda_n^1)(\lambda_n^3-\lambda_n^2)}{|n|^3(\lambda_n^1-icn)(\lambda_n^2-icn)(\lambda_n^3-icn)}\neq 0,
\end{align}
which implies that, for each $n\in\mathbb{Z}^*$,
\begin{equation}\label{eq:inspei2}
	\min\left\{|q|\,:\, q\in\sigma(B_n^*B_n)\right\}>0.
\end{equation}
On the other hand, we have that $B_n^*B_n : = \left(B_{k,\ell}\right)_{1\leq k,\ell\leq 3}$ with
\begin{align*}
	&B_{k,k} = 1+\frac{|\lambda_n^k|^2}{|n|^2}+\frac{1}{|\lambda_n^k-icn|^2},\;\;\; k=1,2,3
	\\
	&B_{k,\ell} = 1+\frac{\overline{\lambda}_n^k\lambda_n^\ell}{|n|^2}+\frac{1}{(\overline{\lambda}_n^k+icn)(\lambda_n^\ell-icn)} = \overline{B}_{\ell,k}, \;\;\; k\neq\ell=1,2,3.
\end{align*}	
The above relation, \eqref{eq:eigneq} and \eqref{eq:eig} imply that
\begin{equation}\label{eq:limbn}
	B_n^*B_n \longrightarrow \widetilde{B}:=\left(\begin{array}{ccc}
	\displaystyle 1+c^2+\frac{1}{|M|^2}& 1+c(c+1)& 1+c(c-1)
	\\[4pt]
	\displaystyle 1+c(c+1)& 1+(c+1)^2 &1+(c+1)(c-1)
	\\[4pt]
	\displaystyle 1+c(c-1) &1+(c+1)(c-1)& 1+(c-1)^2
	\end{array}\right)\mbox{ as }n\rightarrow \infty.
\end{equation}

Now, we notice that $\det( \widetilde{B})=\frac{6}{M^2}\neq 0$ and, since the matrix $ \widetilde{B}$ is positive defined, there exist two positive constants $\textfrak{a}_1'$ and $\textfrak{a}_2'$ such that 
\begin{equation}\label{eq:specbn22}
	\sigma( \widetilde{B})\subset [\textfrak{a}_1',\textfrak{a}_2'].
\end{equation}
From \eqref{eq:inspei2} and \eqref{eq:specbn22}, we deduce that \eqref{eq:specbn2} holds and the proof of the Lemma is complete.
\end{proof}


\subsection{Gap properties of the spectrum of \eqref{wave_mem_adj_syst_cv}}

This section is devoted to analyze the distance between the three families composing the spectrum of \eqref{wave_mem_adj_syst_cv}. As it is well-known, these gap properties play a fundamental role in the proof of our controllability result. In what follows, we will use the notation
\begin{align}\label{eq:noset}
	S=\{(n,j)\,:\,n\in\mathbb{Z}^*,\,\, 1\leq j\leq 3\}.
\end{align} 

Since there is no major change in the spectrum if $c$ is replaced by $-c$, we shall limit our analysis to the case $c>0$. Let us start by studying the distance between the elements of $(\lambda_n^1)_{n\in\mathbb{Z}^*}$ and those of  $(\lambda_n^2)_{n\in\mathbb{Z}^*}$ and $(\lambda_n^3)_{n\in\mathbb{Z}^*}$. 

\begin{lemma}\label{lemma:dist1} 
For any $n,m\in\mathbb{Z}^*$ and $k\in\{2,3\}$ we have that 
\begin{equation}\label{eq:diff123} 
\left|\lambda_n^1-\lambda_m^k\right|\geq \frac{|M|}{M^2+1}>0.
\end{equation}
Moreover, for any $n\neq m$, we have that
\begin{equation}\label{eq:diff11}
    \left|\lambda_n^1-\lambda_m^1\right|\geq c.
\end{equation}
\end{lemma}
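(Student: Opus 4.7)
The plan is to exploit the explicit decomposition $\lambda_n^j = icn+\mu_{|n|}^j$ together with the information on $\mu_n^j$ provided by Theorem \ref{eigen_mu_thm} and Lemma \ref{lemma:sir}. The key structural observation is that $\mu_{|n|}^1$ is purely real, whereas for $k\in\{2,3\}$ we have $\mathrm{Re}(\mu_{|n|}^k) = -\mu_{|n|}^1/2$. Thus the imaginary part of $\lambda_n^j$ contains the ``large'' term $cn$ (useful when comparing different indices), while the real part of $\lambda_n^j$ depends only on $\mu_{|n|}^1$ (useful when comparing different families). Each of the two claimed inequalities is a consequence of bounding $|\lambda_n^1-\lambda_m^k|$ below by either its real or imaginary part.

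For \eqref{eq:diff123}, I would first note that since $ic(n-m)$ is purely imaginary,
\[
|\lambda_n^1-\lambda_m^k|\;\geq\;\bigl|\mathrm{Re}(\mu_{|n|}^1-\mu_{|m|}^k)\bigr|\;=\;\Bigl|\mu_{|n|}^1+\tfrac{1}{2}\mu_{|m|}^1\Bigr|,\qquad k\in\{2,3\}.
\]
The crucial point is that all the numbers $\mu_{|n|}^1$ share the sign of $M$ (they are the unique real roots of $\mu^3+n^2\mu-Mn^2=0$ lying between $0$ and $M$, as shown in Theorem \ref{eigen_mu_thm}). Hence $\mu_{|n|}^1$ and $\tfrac{1}{2}\mu_{|m|}^1$ have the same sign and cannot cancel, giving
\[
\Bigl|\mu_{|n|}^1+\tfrac{1}{2}\mu_{|m|}^1\Bigr|\;=\;|\mu_{|n|}^1|+\tfrac{1}{2}|\mu_{|m|}^1|\;\geq\;|\mu_{|n|}^1|\;\geq\;\frac{|M|}{M^2+1},
\]
where the last inequality is exactly the lower bound \eqref{eq:red} from Lemma \ref{lemma:sir}.

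For \eqref{eq:diff11}, the argument is even shorter: since $\mu_{|n|}^1$ and $\mu_{|m|}^1$ are both real, the imaginary part of the difference is simply $c(n-m)$, so
\[
|\lambda_n^1-\lambda_m^1|\;\geq\;|\mathrm{Im}(\lambda_n^1-\lambda_m^1)|\;=\;|c|\,|n-m|\;\geq\;c
\]
whenever $n\neq m$ (using that $n,m\in\mathbb{Z}^*$ and $c>0$ by the reduction at the beginning of the subsection).

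I do not expect any serious obstacle. The only subtle point is making sure one invokes Lemma \ref{lemma:sir} correctly, and that the sign-alignment argument is written carefully enough to cover both $M>0$ and $M<0$ simultaneously (by working throughout with absolute values of the $\mu_n^1$'s). Neither step requires new estimates beyond those already proved.
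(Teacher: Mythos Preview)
Your proposal is correct and follows essentially the same approach as the paper: both bound $|\lambda_n^1-\lambda_m^k|$ below by the real part of the difference for \eqref{eq:diff123} (using that $\mu_{|n|}^1$ and $\Re(\mu_{|m|}^k)=-\tfrac{1}{2}\mu_{|m|}^1$ have opposite signs, then invoking \eqref{eq:red}), and by the imaginary part for \eqref{eq:diff11}. Your write-up is in fact slightly more explicit than the paper's in identifying $\Re(\mu_{|m|}^k)$, but the argument is the same.
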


\begin{proof}
We remark that, if $k\in\{2,3\}$, the numbers  $\Re(\mu_n^1)$ and $\Re(\mu_m^k)$ have oposite signs. By taking into account \eqref{eq:red}, we deduce that
\begin{align*}
	\left|\lambda_n^1-\lambda_m^k\right|\geq\left|\Re\left(\lambda_n^1-\lambda_m^k\right)\right|=\left|\Re\left(\mu_{|n|}^1-\mu_{|m|}^k\right)\right|=\left|\mu_{|n|}^1\right|+\left|\Re(\mu_{|m|}^k)\right|\geq \frac{|M|}{M^2+1},
\end{align*}    
and \eqref{eq:diff123} is proved. On the other hand, since $\mu_k^1\in\mathbb{R}$, for all $n\neq m$ we have that
\begin{align*}
    \left|\lambda_n^1-\lambda_m^1\right|\geq \left|\Im\left(\lambda_n^1-\lambda_m^1\right)\right|=|c(n-m)|\geq c.
\end{align*}
The proof of the Lemma is complete.
\end{proof}

Lemma \ref{lemma:dist1} shows that each element of the sequence $(\lambda_n^1)_{n\in\mathbb{Z}^*}$ is well separated from the others elements of the spectrum of $\mathcal{A}_c$. Let us now analyze the case of the other two families. With this aim, we define the set
\begin{align}\label{spaceV}
	\mathcal V=\left\{ \sqrt{1+3\left(\frac{\mu_n^1}{2n}\right)^2}\,:\,n\in\mathbb{N}^*\right\},
\end{align}
and we pass to analyze the low eigenvalues.

\begin{lemma}\label{lemma:dist20} 
Let $c\in (0,1)\cup (1,\infty)$. 
\begin{enumerate} 
	\item If $c\notin{\mathcal V}$, then for each $N\in\mathbb{N}^*$ there exists a constant $\gamma=\gamma(N,c)>0$ such that the following estimate holds
    \begin{equation}\label{eq:diff023} 
	    \left|\lambda_n^j-\lambda_m^k\right|\geq \gamma(N,c),
	\end{equation}
    for any $(n,j),(m,k)\in S$ with $(n,j)\neq (m,k)$, $1\leq |n|,|m|\leq N$ and $ j,k\in\{2,3\}$.

    \item If $c\in {\mathcal V}$ then there exists a unique $n_c\geq 1$ such that 
    \begin{equation}\label{eq:surprise}
	    \lambda_{-n_c}^2=\lambda_{n_c}^3,
	\end{equation} 
	and for each $N\in\mathbb{N}^*$  there exists a constant $\gamma=\gamma(N,c)>0$ such that the following estimate holds
    \begin{equation}\label{eq:diff023n} 
	    \left|\lambda_n^j-\lambda_m^k\right|\geq \gamma(N,c),
	\end{equation}
    for any $(n,j),(m,k)\in S$ with $(n,j)\neq (m,k)$, $1\leq |n|,|m|\leq N$ and $ j,k\in\{2,3\}$, excepting the case $(n,j)= (- n_c,2)$ and $(m,k)=(n_c,3)$ given by \eqref{eq:surprise}.
\end{enumerate}
\end{lemma}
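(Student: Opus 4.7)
The plan is to reduce this statement to a finite case analysis. Since the indices are restricted by $1\leq |n|,|m|\leq N$ and $j,k\in\{2,3\}$, only finitely many pairs $(n,j),(m,k)\in S$ are involved; hence the lower bound $\gamma(N,c)$ will follow automatically as the minimum of the finitely many nonzero values $|\lambda_n^j-\lambda_m^k|$, provided we identify exactly which of these differences can vanish.

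The key calculation rests on the explicit description from Theorem \ref{eigen_mu_thm}. Setting $\nu_n:=\sqrt{3(\mu_n^1/2)^2+n^2}$, for $j\in\{2,3\}$ one has $\lambda_n^j=icn-\mu_{|n|}^1/2\pm i\nu_{|n|}$, so $\Re\lambda_n^j=-\mu_{|n|}^1/2$ depends only on $|n|$. The strict monotonicity of $(|\mu_n^1|)_{n\geq 1}$ established in Lemma \ref{lemma:sir} therefore guarantees that, whenever $|n|\neq|m|$, the real parts of $\lambda_n^j$ and $\lambda_m^k$ differ, and consequently
\begin{align*}
    |\lambda_n^j-\lambda_m^k|\geq \tfrac{1}{2}\bigl|\mu_{|n|}^1-\mu_{|m|}^1\bigr|>0.
\end{align*}
The remaining task is to handle the coincidences $|n|=|m|$, which split into $m=n$ and $m=-n$.

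A direct evaluation yields, for $n\geq 1$,
\begin{align*}
    \lambda_n^2-\lambda_n^3 &= 2i\nu_n, \qquad
    \lambda_n^j-\lambda_{-n}^j = 2icn \;\;(j\in\{2,3\}),\\
    \lambda_n^2-\lambda_{-n}^3 &= 2i(cn+\nu_n), \qquad
    \lambda_{-n}^2-\lambda_n^3 = 2i(\nu_n-cn).
\end{align*}
The first three expressions never vanish for $c>0$ and $n\geq 1$. The last one, rewritten as
\begin{align*}
    \lambda_{-n}^2-\lambda_n^3 = 2in\left(\sqrt{1+3\left(\tfrac{\mu_n^1}{2n}\right)^2}-c\right),
\end{align*}
vanishes if and only if $c$ coincides with the $n$-th term of the sequence defining $\mathcal{V}$. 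Invoking Lemma \ref{lemma:sir} once more, the strict monotonicity of $(|\mu_n^1|/n)_{n\geq 1}$ makes $n\mapsto\sqrt{1+3(\mu_n^1/(2n))^2}$ itself strictly decreasing, so the equation $c=\sqrt{1+3(\mu_{n_c}^1/(2n_c))^2}$ admits at most one integer solution $n_c\in\mathbb{N}^*$. This establishes the uniqueness in part (2) and confirms \eqref{eq:surprise} as the only possible collision.

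With the unique degenerate pair identified, both conclusions follow: if $c\notin\mathcal{V}$ no collision occurs, while if $c\in\mathcal{V}$ the only coincidence is $\lambda_{-n_c}^2=\lambda_{n_c}^3$. In either situation the bound $\gamma(N,c)$ is obtained as the minimum of $|\lambda_n^j-\lambda_m^k|$ over the finite set of nondegenerate pairs, and is strictly positive by construction. The main difficulty here is organizational rather than analytic: one must carefully exhaust all pairings arising from $|n|=|m|$ with $j,k\in\{2,3\}$ so as to confirm that $\nu_n=cn$ is the unique source of degeneracy and that no extra collision has been overlooked.
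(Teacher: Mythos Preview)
Your proof is correct and follows essentially the same strategy as the paper: separate into the cases $|n|\neq|m|$ (handled via the real parts and the strict monotonicity of $(|\mu_n^1|)_{n\geq1}$ from Lemma~\ref{lemma:sir}) and $|n|=|m|$ (handled by explicitly computing the purely imaginary differences), then take the minimum over the finitely many nonzero values. The only noteworthy difference is that you supply an explicit argument for the \emph{uniqueness} of $n_c$, using the strict monotonicity of $(|\mu_n^1|/n)_{n\geq1}$ from Lemma~\ref{lemma:sir}, whereas the paper's proof states uniqueness without justification; in this respect your argument is slightly more complete.
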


\begin{proof} 
For any $(n,j),(m,k)\in S$, we have that  
\begin{align*}
    \left|\lambda_{n}^j-\lambda_{m}^k\right|>\left|\Re(\lambda_{n}^j)-\Re(\lambda_{m}^k)\right|=
    \left|\mu_{|n|}^1-\mu_{|m|}^1\right|.
\end{align*}
Since Lemma \ref{lemma:sir} ensures that the sequence of real numbers $\left(|\mu_n^1|\right)_{n\geq 1}$ is increasing, it follows that
\begin{align*}
	\inf \left\{\left|\lambda_{n}^j-\lambda_{m}^k\right|\,:\, (n,j),(m,k)\in S, \,\, |n|\neq |m|,\,\, 1\leq |n|,|m|\leq N,\,\, 2\leq j,k\leq 3\right\}>0.
\end{align*}
It remains to study the case $|n|=|m|$. If $j\in\{2,3\}$ and $n\geq 1$, then 
\begin{align*}
    \left|\lambda_{n}^j-\lambda_{-n}^j\right|>\left|\Im(\lambda_{n}^j)-\Im(\lambda_{-n}^j)\right|=2nc\geq 2c>0.
\end{align*}
Moreover, if $n\geq 1$, then
\begin{align*}
    &\left|\lambda_{n}^2-\lambda_{n}^3\right|>\left|\Im(\lambda_{n}^2)-\Im(\lambda_{n}^3)\right|=2\sqrt{3\left(\frac{\mu_n^1}{2}\right)^2+n^2}\geq 2,
    \\
    &\left|\lambda_{n}^2-\lambda_{-n}^3\right|>\left|\Im(\lambda_{n}^2)-\Im(\lambda_{-n}^3)\right|=2nc+2\sqrt{3\left(\frac{\mu_n^1}{2}\right)^2+n^2}\geq 2,
    \\
    &\left|\lambda_{-n}^2-\lambda_{-n}^3\right|>\left|\Im(\lambda_{-n}^2)-\Im(\lambda_{-n}^3)\right|=2\sqrt{3\left(\frac{\mu_n^1}{2}\right)^2+n^2}\geq 2.
\end{align*}
Finally, we remark that
\begin{align*}
    \left|\lambda_{-n}^2-\lambda_{n}^3\right|>\left|\Im(\lambda_{-n}^2)-\Im(\lambda_{n}^3)\right|=\left|-2nc+2\sqrt{3\left(\frac{\mu_n^1}{2}\right)^2+n^2}\right|.
\end{align*}   
 
The last expression is zero if, and only if, there exists $n_c>0$ such that $c=\sqrt{1+3\left(\frac{\mu_n^1}{2n_c}\right)^2}$ (hence, $c\in{\mathcal V}$), and the proof is complete.
\end{proof}

\begin{remark}  
The previous Lemma shows that, if $c\in{\mathcal V}$ there exists a unique double  eigenvalue $\lambda_{-n_c}^2$. Its geometric multiplicity is two, since there are two  corresponding linearly independent eigenfunctions $\Psi^2_{- n_c}$ and $\Psi^3_{n_c}$.
\end{remark}

Now we pass to analyze the high part of the spectrum. The properties of the large elements of the sequences $(\lambda_n^2)_{n\in\mathbb{Z}^*}$ and $(\lambda_n^3)_{n\in\mathbb{Z}^*}$ are described in the following lemma.

\begin{lemma} \label{lemma:inc} 
For any $\epsilon>0$ there exists $N_\epsilon^1\in\mathbb{N}$ such that
\begin{enumerate}
    \item If $c\in(0,1)$ then we have that
    \begin{itemize}
        \item The sequences $\left(\Im(\lambda_n^2)\right)_{n\geq 1}$ and $\left(\Im(\lambda_{-n}^2)\right)_{n\geq N_\epsilon^1}$ are increasing and included in the intervals $\left[1+c ,\infty\right)$ and $\left[1-c ,\infty\right)$, respectively.

        \item The sequences $\left(\Im(\lambda_n^3)\right)_{n\geq N_\epsilon^1}$ and $\left(\Im(\lambda_{-n}^3\right)_{n\geq 1}$ are decreasing and included in the intervals $\left(-\infty,-1+c\right]$ and $\left(-\infty,-c-1\right]$, respectively.
    \end{itemize}
    Moreover, we have that
    \begin{equation}\label{eq:dinm1}
	    \begin{array}{ll}
		    \Im(\lambda_{n+1}^2)-\Im(\lambda_{n}^2)=\Im(\lambda_{-n}^3)-\Im(\lambda_{-n-1}^3)\geq c+1-\epsilon, & \mbox{ for }n\geq N_\epsilon^1, 
	        \\[8pt]
	        \Im(\lambda_{-n-1}^2)-\Im(\lambda_{-n}^2)=\Im(\lambda_{n}^3)-\Im(\lambda_{n+1}^3)\geq 1-c-\epsilon, & \mbox{ for }n\geq N_\epsilon^1,  
	        \\[8pt]
	        \Im(\lambda_{-n}^2)\leq \Im(\lambda_{-N_\epsilon^1}^2),\quad \Im(\lambda_{n}^3)\geq \Im(\lambda_{N_\epsilon^1}^3),  & \mbox{ for }1\leq n\leq N_\epsilon^1.
        \end{array}
    \end{equation}

    \item If $c\in(1,\infty)$ then we have that
    \begin{itemize}
        \item The sequences $\left(\Im(\lambda_n^2)\right)_{n\geq 1}$ and $\left(\Im(\lambda_{n}^3)\right)_{n\geq N_\epsilon^1}$ are increasing and included in the intervals $\left[c +1,\infty\right)$ and $\left[c-1-\epsilon ,\infty\right)$, respectively.

        \item The sequences $\left(\Im(\lambda_{-n}^2)\right)_{n\geq N_\epsilon^1}$ and $\left(\Im(\lambda_{-n}^3\right)_{n\geq 1}$ are decreasing and included in the intervals $\left(-\infty,1-c+\epsilon\right)$ and $\left(-\infty,-1-c\right)$, respectively.
    \end{itemize}
    Moreover, we have that
    \begin{equation}\label{eq:dinm2}
        \begin{array}{ll}
	        \Im(\lambda_{n+1}^2)-\Im(\lambda_{n}^2)=\Im(\lambda_{-n}^3)-\Im(\lambda_{-n-1}^3)\geq c +1-\epsilon, & \mbox{ for }n\geq N_\epsilon^1, 
	        \\[8pt]
	        \Im(\lambda_{-n}^2)-\Im(\lambda_{-n-1}^2)=\Im(\lambda_{n}^3)-\Im(\lambda_{n+1}^3)\geq c-1-\epsilon, & \mbox{ for }n\geq N_\epsilon^1,
	        \\[8pt]
	        \Im(\lambda_{n}^3)\leq \Im(\lambda_{N_\epsilon^1}^3),\quad \Im(\lambda_{-n}^2)\geq \Im(\lambda_{-N_\epsilon^1}^2),  & \mbox{ for }1\leq n\leq N_\epsilon^1.
        \end{array}
    \end{equation}
    \end{enumerate}
\end{lemma}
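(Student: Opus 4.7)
The plan is to reduce the entire lemma to the behavior of the single real sequence $s_n:=\sqrt{3(\mu_n^1/2)^2+n^2}$ for $n\geq 1$. Since $\mu_n^1\in\RR$ while $\mu_n^2$ and $\mu_n^3=\overline{\mu_n^2}$ contribute $\pm s_n$ to the imaginary part, formulas \eqref{eq:eigneq} and \eqref{eq:eig} give at once
$$\Im(\lambda_{\pm n}^2)=\pm cn+s_n,\qquad \Im(\lambda_{\pm n}^3)=\pm cn-s_n, \qquad n\geq 1,$$
so every gap $\Im(\lambda_{n+1}^j)-\Im(\lambda_n^j)$ in \eqref{eq:dinm1}--\eqref{eq:dinm2} takes the form $\pm c\pm(s_{n+1}-s_n)$, and the problem collapses to understanding the single increment $s_{n+1}-s_n$.

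Two facts about $s_n$ will suffice. First, the trivial bound $s_n\geq n$ yields all the asserted interval inclusions in one stroke: for instance $\Im(\lambda_n^2)\geq (c+1)n\geq c+1$ for every $n\geq 1$ in both subcases, and $\Im(\lambda_{-n}^2)\geq (1-c)n\geq 1-c$ when $c\in(0,1)$; the remaining inclusions in each subcase are the same computation with signs swapped. Second, from the asymptotic $\mu_n^1=M+\O(n^{-2})$ in \eqref{eq:eig} I obtain $s_n=n+\tfrac{3M^2}{8n}+\O(n^{-3})$, hence
$$s_{n+1}-s_n=1+\O(n^{-2}).$$
In particular, given $\epsilon>0$ I can choose $N_\epsilon^1\in\NN$ with $|s_{n+1}-s_n-1|<\epsilon$ for every $n\geq N_\epsilon^1$.

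Combining the two steps recovers the gap estimates in \eqref{eq:dinm1}--\eqref{eq:dinm2} by direct substitution. The monotonicity assertions follow as well: $(\Im(\lambda_n^2))_{n\geq 1}$ and $(\Im(\lambda_{-n}^3))_{n\geq 1}$ are increasing, respectively decreasing, without any threshold because $s_n$ is itself strictly increasing (by Lemma \ref{lemma:sir}, $\mu_n^1>0$ and $\mu_n^1$ is increasing, so $s_n^2$ is); the four other monotonicities activate only for $n\geq N_\epsilon^1$ because they depend on the sign of $-c+(s_{n+1}-s_n)$ or $c-(s_{n+1}-s_n)$, which is only guaranteed once the $\O(n^{-2})$ error in the expansion of $s_{n+1}-s_n$ has been absorbed by $\epsilon$. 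After that, the proof is pure bookkeeping across the four subcases $(c,j)\in\{(<1,2),(<1,3),(>1,2),(>1,3)\}$, each amounting to reading off the sign of $\pm c\pm(1\pm\epsilon)$.

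The main obstacle I anticipate is the last line of each of \eqref{eq:dinm1} and \eqref{eq:dinm2}, which concerns the non-monotone initial segment $1\leq n\leq N_\epsilon^1$: the claim $\Im(\lambda_{-n}^2)\leq\Im(\lambda_{-N_\epsilon^1}^2)$ (and its analogue for $\lambda_n^3$) on that segment. Since Lemma \ref{lemma:sir} gives no uniform lower bound on $s_{n+1}-s_n$ for small $n$, I would handle this by simply enlarging $N_\epsilon^1$, if necessary, past the last index at which the relevant increment $-c+(s_{n+1}-s_n)$ (respectively $c-(s_{n+1}-s_n)$) changes sign; because the increment is eventually trapped in $(1-c-\epsilon,1-c+\epsilon)$ (or $(c-1-\epsilon,c-1+\epsilon)$), such a sign change can happen only finitely often, so an enlargement by a finite amount is legitimate and forces the extremum on $\{1,\dots,N_\epsilon^1\}$ to be attained at the endpoint $N_\epsilon^1$. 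With this adjustment, the full statement of the lemma follows from the identities established in the first paragraph.
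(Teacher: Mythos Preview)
Your approach is essentially the paper's: write $\Im(\lambda_{\pm n}^{2,3})=\pm cn\pm s_n$ with $s_n=\sqrt{3(\mu_n^1/2)^2+n^2}$, use $s_n\geq n$ for the coarse bounds, and control $s_{n+1}-s_n$ for the gap estimates. The paper packages the latter slightly differently, choosing $N_\epsilon^1$ so that $s_n-n=\frac{\tfrac34(\mu_n^1)^2}{s_n+n}\leq\epsilon$ for $n\geq N_\epsilon^1$ and then writing each increment as $(1\pm c)+[(s_{n+1}-(n+1))-(s_n-n)]$, but this is equivalent to your $|s_{n+1}-s_n-1|<\epsilon$.

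Two small points to tighten. First, the lower bound $s_n\geq n$ does \emph{not} deliver all interval inclusions ``in one stroke'': for $c>1$ the inclusions $\Im(\lambda_n^3)\geq c-1-\epsilon$ and $\Im(\lambda_{-n}^2)\leq 1-c+\epsilon$ require the \emph{upper} bound $s_n\leq n+\epsilon$, which you obtain from your asymptotic $s_n=n+\tfrac{3M^2}{8n}+\O(n^{-3})$ but did not invoke there. Second, for the last lines of \eqref{eq:dinm1}--\eqref{eq:dinm2}, enlarging $N_\epsilon^1$ ``past the last sign change'' of the increment is not by itself enough to force the extremum at the endpoint: you still need $f(N_\epsilon^1)$ to dominate the values on the pre-monotone segment. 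Since $f(n)=-cn+s_n\sim(1-c)n\to\pm\infty$ (depending on whether $c\lessgtr1$), a further finite enlargement achieves this; you should say so explicitly. With these two adjustments your argument is complete. (The paper's own proof, incidentally, does not spell out this last step either.)
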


\begin{proof}
We first notice that
\begin{equation}\label{eq:leg23}
    \overline{\lambda}_n^2=-icn+\mu^1_{|n|}-i\sqrt{3\left(\frac{\mu^1_{|n|}}{2}\right)^2+n^2}=\lambda_{-n}^3,\qquad n\in\mathbb{Z}^*.
\end{equation}

We deduce that $\Im(\lambda^3_{n})=-\Im(\lambda^2_{-n})$.  Also, let us remark that, since the sequence $\left(\mu_{n}^1\right)_{n\geq 1}$ is bounded (see \eqref{eq:red} above), given any $\epsilon>0$ there exists $N_\epsilon^1\in\mathbb{N}$ such that 
\begin{align}\label{eq:neps}
    \frac{\frac{3}{4}\left(\mu^1_{n}\right)^2}{\sqrt{\frac{3}{4}\left(\mu^1_{n}\right)^2+n^2}+n}\leq \epsilon,\qquad n\geq N_\epsilon^1.
\end{align}
We analyze the cases $c\in(0,1)$ only, the other one being similar. We have that
\begin{itemize}
    \item For the sequence $\left(\Im(\lambda_n^2)\right)_{n\geq 1}$ we have that
    \begin{align*}
        \Im(\lambda_{n}^2)= cn+\sqrt{3\left(\frac{\mu^1_{|n|}}{2}\right)^2+n^2}\geq 1+c, \qquad n\geq 1,
    \end{align*}    
    and, according to \eqref{eq:neps}, for any $n\geq N_\epsilon^1$ it follows that
    \begin{align*}
        \Im(\lambda_{n+1}^2)-\Im(\lambda_{n}^2) &=1 +c+\frac{\frac{3}{4}\left(\mu^1_{|n+1|}\right)^2}{\sqrt{\frac{3}{4}\left(\mu^1_{|n+1|}\right)^2+(n+1)^2}+(n+1)}-\frac{\frac{3}{4}\left(\mu^1_{|n|}\right)^2}{\sqrt{\frac{3}{4}\left(\mu^1_{|n|}\right)^2+n^2}+n}
	    \\
	    &\geq 1+c-\epsilon.
	\end{align*}
            
	\item  As for the sequence $\left(\Im(\lambda_{-n}^2)\right)_{n\geq 1}$ we remark that
	\begin{align*}
	    \Im(\lambda_{-n}^2)= (1-c)n+\frac{\frac{3}{4}\left(\mu^1_{|n|}\right)^2}{\sqrt{\frac{3}{4}\left(\mu^1_{|n|}\right)^2+n^2}+n}\geq 1-c,
	\end{align*}     
	and, according to \eqref{eq:neps}, for any $n\geq N_\epsilon^1$ it follows that
	\begin{align*}
	    \Im(\lambda_{-n-1}^2)-\Im(\lambda_{-n}^2) &= 1 -c+\frac{\frac{3}{4}\left(\mu^1_{|n+1|}\right)^2}{\sqrt{\frac{3}{4}\left(\mu^1_{|n+1|}\right)^2+(n+1)^2}+(n+1)}-\frac{\frac{3}{4}\left(\mu^1_{|n|}\right)^2}{\sqrt{\frac{3}{4}\left(\mu^1_{|n|}\right)^2+n^2}+n}
	    \\
	    &\geq 1-c-\varepsilon.
	\end{align*}
\end{itemize}
	
Hence, all the desired properties of the sequence  $\left(\Im(\lambda_{n}^2)\right)_{n\in\mathbb{Z}^*}$ are proved in the case $c\in (0,1)$. Since $\Im(\lambda_n^3)=-\Im(\lambda_{-n}^2)$, the properties of the sequence  $\left(\Im(\lambda_{n}^3)\right)_{n\in\mathbb{Z}^*}$  follow immediately from those of  $\left(\Im(\lambda_{n}^2)\right)_{n\in\mathbb{Z}^*}$.
\end{proof}

Now we can pass to study the possible interactions between the large elements of the sequences $\left(\lambda_{n}^2\right)_{n\in\mathbb{Z}^*}$  and $\left(\lambda_{n}^3\right)_{n\in\mathbb{Z}^*}$.

\begin{lemma}\label{lemma.dist23} 
Let $\epsilon>0$ sufficiently small. There exists $N_\epsilon\in\mathbb{N}$ and two positive constants $\gamma(c,\epsilon)$ and $\gamma'(c,\epsilon)$ with the property that for each $m\geq N_\epsilon$ there exists $n_m\geq N_\epsilon$ such that
\begin{enumerate}
    \item If $c\in (0,1)$ then
    \begin{equation}\label{eq:di0}
        \frac{1-c}{2}+3\epsilon \geq  \left|\lambda_{m}^2-\lambda_{-n_m}^2\right|=\left|\lambda_{-m}^3-\lambda_{n_m}^3\right|\geq \frac{\gamma'(c,\epsilon)}{|m|^2},
    \end{equation}
    and
    \begin{align}\label{eq:di1}
        \inf\left\{\left|\lambda_{m}^2-\lambda_{n}^2\right|\,:\right.&\left.\,|n|\geq N_\epsilon,\,\, n\neq -n_m \right\} \notag 
        \\
        &=\inf\left\{\left|\lambda_{-m}^3-\lambda_{n}^3\right|\,:\,|n|\geq N_\epsilon,\,\, n\neq n_m \right\}\geq \gamma(c,\epsilon).
    \end{align}

    \item If $c\in (1,\infty)$ then
    \begin{equation}\label{eq:di3}
        \frac{c-1}{2}+3\epsilon\geq  \left|\lambda_{m}^2-\lambda_{n_m}^3\right|=\left|\lambda_{-m}^3-\lambda_{-n_m}^2\right|\geq \frac{\gamma'(c,\epsilon)}{|m|^2},
    \end{equation}
    and
    \begin{align}\label{eq:di4}
        \inf\left\{\left|\lambda_{m}^2-\lambda_{n}^3\right|\,:\right.&\left.\,|n|\geq N_\epsilon,\,\, n\neq n_m \right\}\notag 
        \\
        &=\inf\left\{\left|\lambda_{-m}^3-\lambda_{n}^2\right|\,:\,|n|\geq N_\epsilon,\,\, n\neq- n_m \right\}\geq \gamma(c,\epsilon).
    \end{align}
\end{enumerate}
\end{lemma}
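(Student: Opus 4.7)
The plan is to exploit two structural features: the complex-conjugation identity $\overline{\lambda_n^2} = \lambda_{-n}^3$ from~\eqref{eq:leg23}, which reduces the two equalities in each displayed bound to a single one; and the explicit decomposition $\lambda_n^j = icn + \mu_{|n|}^j$ together with $\mu_n^2 = -\tfrac{1}{2}\mu_n^1 + i f(n)$, where $f(k) := \sqrt{3(\mu_k^1/2)^2 + k^2}$ admits by Theorem~\ref{eigen_mu_thm} the expansion $f(k) = k + \tfrac{3M^2}{8k} + \mathcal{O}(k^{-3})$. I would treat Case~1 ($c\in(0,1)$) in detail, as Case~2 ($c>1$) is symmetric, with $(c-1)$ playing the role of $(1-c)$ and~\eqref{eq:dinm2} used in place of~\eqref{eq:dinm1}.

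For Case~1 I would start from the identities
\[
\Im(\lambda_m^2) - \Im(\lambda_{-n}^2) = c(m+n) + f(m) - f(n), \qquad \Re(\lambda_m^2) - \Re(\lambda_{-n}^2) = \tfrac{1}{2}(\mu_n^1 - \mu_m^1),
\]
and observe that the imaginary part vanishes to leading order exactly when $(1+c)m = (1-c)n$. I would then define $n_m$ as the nearest positive integer to $\tfrac{1+c}{1-c}m$ (corrected by the subleading term from the expansion of $f$). For the upper bound in~\eqref{eq:di0}, Lemma~\ref{lemma:inc} gives a spacing of $(1-c) + \mathcal{O}(\epsilon)$ between consecutive elements of $(\Im(\lambda_{-n}^2))_{n\geq N_\epsilon^1}$, so the nearest-integer choice yields an imaginary-part contribution at most $\tfrac{1-c}{2}+\mathcal{O}(\epsilon)$; the real-part contribution is $\mathcal{O}(m^{-2})$ by Lemma~\ref{lemma:sir} and~\eqref{eq:lambdanp} and is absorbed into $\epsilon$ once $m \geq N_\epsilon$.

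The lower bound in~\eqref{eq:di0} is the heart of the matter. Since the imaginary-part difference can be made arbitrarily close to zero along suitable subsequences of $m$ (depending on how well $\tfrac{1+c}{1-c}$ is approximated by integer ratios of denominator $m$), positivity must come from the real parts. Substituting $n_m \sim \tfrac{1+c}{1-c}m$ into $\mu_k^1 = M - M^3/k^2 + \mathcal{O}(k^{-4})$ gives
\[
\tfrac{1}{2}\bigl|\mu_{n_m}^1 - \mu_m^1\bigr| = \frac{2|M|^3 c}{(1+c)^2\,m^2} + \mathcal{O}(m^{-3}),
\]
and the bound $|\lambda_m^2 - \lambda_{-n_m}^2| \geq \gamma'(c,\epsilon)/m^2$ then follows from $|z|\geq|\Re z|$. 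For~\eqref{eq:di1}, any $n\neq -n_m$ with $|n|\geq N_\epsilon$ sits at imaginary-part distance at least $\tfrac{1-c}{2}-\mathcal{O}(\epsilon)$ from $\Im(\lambda_m^2)$ (the full spacing $1-c$ minus the half-spacing already consumed by $n_m$, perturbed by the bounds of Lemma~\ref{lemma:inc}), which is uniformly positive; the finitely many configurations with $|n|<N_\epsilon$ are handled by a direct enumeration combined with Lemma~\ref{lemma:dist20} to rule out exact collisions.

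The main obstacle is the necessity of going to \emph{second order} in the expansion of $\mu_k^1$. The leading behaviour $\mu_k^1 \to M$ makes the imaginary parts of the different families nearly resonant, and this is precisely the mechanism responsible for the spectral accumulation at $c=\pm 1$ noted in~\eqref{eq:lic}; consequently the $m^{-2}$ separation is \emph{invisible} on the imaginary axis and must be read off from the next-order coefficient $-M^3/k^2$. Both constants $\gamma'(c,\epsilon)$ and $\gamma(c,\epsilon)$ therefore degenerate as $c\to 1$ (the admissible $\epsilon$-window shrinks, while the prefactor $2|M|^3 c/(1+c)^2$ stays finite), in accordance with the loss of null controllability for $c=\pm 1$ remarked after~\eqref{eq:lic}.
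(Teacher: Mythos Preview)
Your approach is essentially the same as the paper's: define $n_m$ as the nearest integer to $\tfrac{1+c}{|1-c|}m$, obtain the upper bound in \eqref{eq:di0} by approximating the imaginary parts linearly (using Lemma~\ref{lemma:inc}), obtain the separation \eqref{eq:di1} for all other $n$ from the residual imaginary-part gap, and read the $1/m^2$ lower bound from the real parts via $|z|\ge|\Re z|$. Your treatment of the last point is actually more explicit than the paper's, which simply asserts $\big|\Re(\lambda_m^2)-\Re(\lambda_{-n_m}^2)\big|\ge\gamma'(c,\epsilon)/m^2$ without writing out the second-order expansion of $\mu_k^1$; your computation of the constant $2|M|^3 c/(1+c)^2$ fills that in correctly. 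The parenthetical ``corrected by the subleading term from the expansion of $f$'' in your definition of $n_m$ is harmless but unnecessary: since the correction is $\mathcal{O}(1/m)$, it does not alter the nearest integer for large $m$, and the paper indeed defines $n_m$ purely via \eqref{eq:nm}.
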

\begin{proof} 
Firstly, notice that according to \eqref{eq:eig} there exists $N^2_\epsilon$ such that 
\begin{equation}\label{eq:remic}
	\left|\Re(\lambda_n^j)+\frac{M}{2}\right|\leq \frac{\epsilon}{2},\qquad |n|\geq N^2_\epsilon,\,\, j\in\{2,3\}.
\end{equation}

Let $m \geq N_\epsilon:=\max\{N^1_\epsilon,N^2_\epsilon\}$, where $N^1_\epsilon$ is the number given by Lemma \ref{lemma:inc}. Let $n_m\in\mathbb{N}$ be such that
\begin{equation}\label{eq:nm}
    \big|(1+c)m-|1-c|n_m\big|=\inf_{n\geq 1}\big{|}(1+c)m-|1-c|n\big{|}.
\end{equation}
Notice that $n_m$ also verifies
\begin{equation}\label{eq:nmint}
    -\frac{1}{2}+\frac{1+c}{|1-c|}m\leq n_m\leq \frac{1}{2}+\frac{1+c}{|1-c|}m,
\end{equation}
and represents nearest integer to $\frac{1+c}{|1-c|}m$. We analyze separately the following two cases:
\begin{enumerate} 
	\item Let $c\in(0,1)$. For each $m\geq N_\epsilon$ we have that
    \begin{equation}\label{eq:innm}
	    \left|\lambda_{m}^2-\lambda_{-n_m}^2\right|=\inf_{n\in \mathbb{Z}^*}\left|\lambda_{m}^2-\lambda_{n}^2\right|,
    \end{equation}
    with $n_m$ given by \eqref{eq:nm}. Indeed, we have that
    \begin{align*}
	    \left|\lambda_{m}^2-\lambda_{-n_m}^2\right| \leq &\, \left|\Im(\lambda_{m}^2-\lambda_{-n_m}^2)\right|+\left|\Re(\lambda_{m}^2-\lambda_{-n_m}^2)\right|
	    \\
	    \leq &\, \left|(1-c)n_m-(1+c)m\right|+\left|(1+c)m-\Im(\lambda_{m}^2)\right|
	    \\
	    &+\left|\Im(\lambda_{-n_m}^2)+(1-c)n_m\right|+\left|\Re(\lambda_{m}^2-\lambda_{-n_m}^2)\right|,
    \end{align*}
    from which, by taking into account \eqref{eq:neps} and \eqref{eq:remic}, we deduce that
    \begin{equation}\label{eq:in1nm}
        \left|\lambda_{m}^2-\lambda_{-n_m}^2\right|\leq  \left|(1+c)m-(1-c)n_m\right|+3\epsilon.
    \end{equation}
    On the other hand, from \eqref{eq:dinm1} and \eqref{eq:neps} we deduce that
    \begin{align*}
        |\lambda_{m}^2&-\lambda_{n}^2| 
        \\
        &\geq \min\left\{\left|\lambda_{m}^2-\lambda_{|n|}^2\right|,\,\,\left|\lambda_{m}^2-\lambda_{-|n|}^2\right| \right\} \geq \min\left\{c+1-\epsilon,\,\, \left|\Im(\lambda_{m}^2-\lambda_{n}^2)\right| \right\}
        \\
        &\geq \min\left\{c+1-\epsilon,\,\, \left|(1+c)m-(1-c)|n|\right|-\left|(1+c)m-\lambda_{m}^2)\right|-\left|\Im(\lambda_{-|n|}^2)+(1-c)|n|\right|\right\}
        \\
        &\geq \min\left\{c+1-\epsilon,\,\, \left|(1+c)m-(1-c)|n|\right|-2\epsilon\right\},
    \end{align*}
    which, by taking into account \eqref{eq:nm}, implies that
    \begin{align}\label{eq:in2nm}
        |\lambda_{n}^2-\lambda_{m}^2| \geq \min\left\{ \left|(1+c)m-(1-c)n_m\right|+2-\epsilon,\,\, \frac{1-c}{2}+ \left|(1+c)m-(1-c)n_m\right|-2\epsilon\right\}.\notag
        \\
    \end{align}
    
    From \eqref{eq:in1nm}-\eqref{eq:in2nm} we deduce that, for $\epsilon$ sufficiently small, \eqref{eq:innm} holds true. It follows that, for each $m\geq N_\epsilon$, we have that
    \begin{equation}
        \inf_{n\in\mathbb{Z}^*,\,\, n\neq n_m}\left|\lambda_{m}^2-\lambda_{n}^2\right|\geq \gamma(\epsilon,c):=\min\left\{2-\epsilon,\,\, \frac{1-c}{2}-2\epsilon\right\},
    \end{equation}
    and
    \begin{equation}
        \left|\lambda_{m}^2-\lambda_{-n_m}^2\right|\geq \left|\Re(\lambda_{m}^2)-\Re(\lambda_{-n_m}^2)\right|\geq \frac{\gamma'(c,\epsilon)}{m^2}.
    \end{equation}

    \item If $c\in(1,\infty)$. As before, for $\epsilon$ small enough and for each $m\geq N_\epsilon$, we have that
    \begin{equation}
        \left|\lambda_{m}^2-\lambda_{n_m}^3\right|=\inf_{n\in\mathbb{Z}^*}\left|\lambda_{m}^2-\lambda_{n}^3\right|,
    \end{equation}
    with $n_m$ given by \eqref{eq:nm} and consequently
    \begin{equation}
        \inf_{n\in\mathbb{Z}^*}\left|\lambda_{m}^2-\lambda_{n}^3\right|\geq \left|\Re(\lambda_{m}^2)-\Re(\lambda_{n_m}^3)\right|\geq \frac{\gamma'(c,\epsilon)}{m^2}.
    \end{equation}
    The rest of the proof is similar to the case $c\in(0,1)$.
\end{enumerate}
\end{proof}

\begin{remark} Let us briefly explain how does the asymptotic spectral gap depend on the algebraic properties of $c$ (to simplify, we suppose that $c\in(1,\infty)$). Let $\mathcal{D}$ denote the set of the real numbers $\theta$ such that 
\begin{equation}\label{eq:ind1}
 0<\left|\theta-{\frac {p}{q}}\right|<{\frac {1}{q^{3}}},
\end{equation}
is satisfied by an infinite number of integer pairs $(p, q)$ with $q > 0$. It is known that  (see, for instance, \cite[Theorem 3.4]{Bugeaud}) $\mathcal{D}$ is an uncountable set (of zero Lebesgue measure). 
\
If $\frac{1+c}{|1-c|}\in \mathcal{D}$, then there exists an infinite number of values $m\in\mathbb{N}$ such that 
\begin{equation}\label{eq:ind2}
 	\Big|(1+c)m-|1-c|n_m \Big|<{\frac {|1-c|}{m^{2}}},
\end{equation}
where $n_m$ verifies \eqref{eq:nm}.  It follows that, for the values $c\in(1,\infty)$ of the velocity having the property $\frac{1+c}{|1-c|}\in \mathcal{D}$, there exists a constant $\gamma''(c,\epsilon)>0$ such that the inequality 
\begin{equation}
\left|\lambda_m ^2-\lambda_{n_m}^3\right|\leq \frac{\gamma''(c,\epsilon)}{m^2},
\end{equation}
is verified by an infinite number of indices $m\in\mathbb{N}$. Consequently, the asymptotic gap of the eigenvalues of the operator $(D(\mathcal{A}_c),\mathcal{A}_c)$ is equal to zero. From the controllability point of view, this implies that there are initial data in the energy space $\Hp\times \Lp$ which cannot be led to zero in any time $T>0$.
\end{remark}

\begin{remark} To summarize the results of this section let us mention that Lemmas \ref{lemma:dist1}-\ref{lemma.dist23} prove that all the elements of the spectrum $\sigma(\mathcal A_c)=
\left(\lambda_n^j\right)_{(n,j)\in S}$ are well separated one from another except for the following special cases:

\begin{enumerate}
	\item If $c\in (0,1)$  the eigenvalues $\lambda^2_m$ and $\lambda^2_{-n_m}$ have a distance at least of order $\frac{1}{m^2}$ between them and a similar relation holds for $\lambda^3_{m}$ and $\lambda^3_{-n_m}$.

	\item If $c\in(1,\infty)$ the eigenvalues $\lambda^2_m$ and $\lambda^3_{n_m}$ have a distance at least of order $\frac{1}{m^2}$ between them and a similar relation holds for $\lambda^3_{-m}$ and $\lambda^2_{-n_m}$.

	\item If $c\in{\mathcal V}$, there exists a unique double eigenvalues  $\lambda_{-n_c}^2=\lambda_{n_c}^3$.
\end{enumerate}

Even though the asymptotic gap between the elements of the spectrum is equal to zero, the fact that we know the velocities with which the distances between these eigenvalues tend to zero, will allow us to estimate the norm of the biorthogonal to the family of exponential functions $\left(e^{-\lambda_n^j t}\right)_{(n,j)\in S}$.
\end{remark}

\section{The biorthogonal family}\label{bio_sec}

In this section we construct and evaluate a biorthogonal sequence $\left(\theta_m^k\right)_{(m,k)\in S}$ in $L^2\left(-\frac{T}{2},\frac{T}{2}\right)$ to the family of exponential functions $\Lambda=\left(e^{-\lambda_{n}^jt}\right)_{(n,j)\in S},$ where $\lambda_n^j$ are given by \eqref{eq:eigneq}. In order to avoid
the double eigenvalue, which according to Lemma \ref{lemma:dist20} occurs if $c\in\mathcal{V}$, and to keep the notation as simple as possible, we make the convention that, if $c\in\mathcal{V}$, we redefine $ \lambda_{-n_c}^2$ as follows
\begin{align}\label{eq:condob}
	\lambda_{-n_c}^2=-cn_ci+i\sqrt{3\left(\frac{\mu_{n_c}^1}{2}\right)^2+n_c^2}-\frac{1}{2}i+\frac{\mu_{n_c}^1}{2}.
\end{align}

In this way Lemmas \ref{lemma:dist1},  \ref{lemma:dist20} and \ref{lemma.dist23} guarantee that all the elements of the family $\left(\lambda_n^j\right)_{(n,j)\in S}$ are different. Since the biorthogonal sequence has the property that
\begin{align*}
	\int_{-\frac{T}{2}}^{\frac{T}{2}}\theta_m^k(t)e^{-\overline{\lambda}_{n}^jt}\,{\rm dt}=\delta_{mk}^{nj},	
\end{align*}
if we define the Fourier transform of $\theta_m^k$,
\begin{align*}
	\widehat{\,\theta}_m^k(z)=\int_{-\frac{T}{2}}^{\frac{T}{2}}\theta_m^k(t)e^{-izt}\,{\rm dt},
\end{align*}
we obtain that
\begin{align}\label{eq:cotr}
	\widehat{\,\theta}_m^k(-i\overline{\lambda}_n^j)=\delta_{mk}^{nj},\qquad (n,j),(m,k)\in S.
\end{align} 
Therefore, we define the infinite product
\begin{align}\label{eq:prod}
	P(z)=z^3\prod_{(n,j)\in S}\left(1+\frac{z}{i\overline{\lambda}_n^j}\right),
\end{align}
and we study some of its properties in the following theorem. The infinite product in definition \eqref{eq:prod} should be understood in the following sense
\begin{align*}
	P(z)=z^3\lim_{R\rightarrow\infty}\prod_{\substack{(n,j)\in S\\|\lambda_n^j|\leq R}}\left(1+\frac{z}{i\overline{\lambda}_n^j}\right).
\end{align*}

We shall prove that the limit exists and defines an entire function. This and other important properties of $P(z)$ are given in the following theorem.

\begin{theorem}\label{te:pprod}
Let $c\in\mathbb{R}\setminus\{-1,0,1\}$ and let $P$ be given by \eqref{eq:prod}. We have that:
\begin{enumerate}
	\item $P$ is well defined, and it is an entire function of exponential type $\left(\frac{1}{|c|}+\frac{1}{|1+c|}+\frac{1}{|1-c|}\right)\pi$.

	\item For each $\delta>0$, there exists a positive constant $C(\delta)>0$ such that $P$ verifies the following estimate 
	\begin{equation}\label{eq:preal}
		\left|P(z)\right|\leq C(\delta),\qquad z=x+iy,\,\,x,y\in\mathbb{R},\,\,|y|\leq \delta.
	\end{equation}  
	Moreover, there exists a constant $C_1>0$ such that, for any $(m,k)\in S$, the following holds 
	\begin{equation}\label{eq:preal2}
		\left|\frac{P(x)}{x+i\overline{\lambda}_m^k}\right| \leq \frac{C_1}{1+\left|x+\Im(\lambda_m^k)\right|},\qquad z=x+iy,\,\,x,y\in\mathbb{R},\,\,|y|\leq \delta.
	\end{equation}

	\item Each point $-i\overline{\lambda}_m^j$ is a simple zero of $P$ and there exists a positive constant $C_2$ such that
	\begin{equation}\label{eq:der}
		\left|P'(-i\overline{\lambda}_m^j)\right|\geq \frac{C_2}{m^2},\qquad (m,j)\in S.
	\end{equation}
\end{enumerate}

\begin{proof} 
Since if we replace $c$ by $-c$ in the eigenvalues expression \eqref{eq:eigneq} we obtain the same spectrum, it is sufficient to study the case $c>0$ only. Let us define the following sequences:
\begin{equation}\label{eq:wnj}
	\begin{cases}
	\displaystyle \nu_n^1=\begin{array}{ll}\frac{1}{c}\lambda_n^1 & n\in\mathbb{Z}^*\end{array}
	\\
	\\
	\nu_n^2=\begin{cases} 
				\frac{1}{c+1}\lambda_n^2 & n\geq 1
				\\
				\frac{1}{c+1}\lambda_{n}^3 & n\leq -1
			\end{cases}
	\\
	\\
	\nu_n^3=\begin{cases}
				\frac{1}{c-1}\lambda_{n}^3 & n\geq 1
				\\
				\frac{1}{c-1}\lambda_{n}^2 & n\leq -1.
			\end{cases}
	\end{cases}
\end{equation}
We notice that, according to \eqref{eq:wnj} and \eqref{eq:eigneq}, $\nu_{-n}^j=\overline{\nu}_{n}^j$, $(n,j)\in S.$ Moreover, we have that
\begin{equation}\label{eq:wnjnew}
	\begin{cases}
		\displaystyle \nu_n^1=i n+\frac{M}{c}+{\mathcal O}\left(\frac{1}{n}\right), & n\in\mathbb{Z}^*
		\\ 
		\\
		\displaystyle \nu_n^2=i n - \frac{M}{2(c+1)}+{\mathcal O}\left(\frac{1}{n}\right), & n\in\mathbb{Z}^*
		\\
		\\
		\displaystyle \nu_n^3=in-\frac{M}{2(c-1)}+{\mathcal O}\left(\frac{1}{n}\right), & n\in\mathbb{Z}^*.
	\end{cases}
\end{equation}
The product $P$ may be rearranged in the following way 
\begin{align}\label{eq:newp}
	P(z)=z^3\prod_{n\in\mathbb{Z}^*}\left(1+\frac{z}{ic\overline{\nu}_n^1}\right) \prod_{n\in\mathbb{Z}^*}\left(1+\right.&\left.\frac{z}{i(c+1)\overline{\nu}_n^2}\right)
\prod_{n\in\mathbb{Z}^*}\left(1+\frac{z}{i(c-1)\overline{\nu}_n^3}\right) \notag 
	\\
	&:=c(c+1)(c-1)P_1\left(\frac{z}{c}\right)P_2\left(\frac{z}{c+1}\right)P_3\left(\frac{z}{c-1}\right),\notag
	\\
\end{align}
where, for each $j\in\{1,2,3\}$, $P_j$ denotes the product
\begin{align*}
	P_j(z)=z\prod_{n\in\mathbb{Z}^*}\left(1+\frac{z}{i\overline{\nu}_n^j}\right).
\end{align*}
Also, we introduce the notation
\begin{equation}\label{eq:cj}
	c_j=\begin{cases}
		c, & \mbox{ if }j=1
		\\
		c+1, & \mbox{ if }j=2
		\\
		c-1, & \mbox{ if }j=3.
	\end{cases}
\end{equation}
From \eqref{eq:newp} it follows that
\begin{equation}\label{eq:pcj}
	P(z)=\prod_{j\in\{1,2,3\}}c_jP_j\left(\frac{z}{c_j}\right).
\end{equation}
Since $\overline{\nu}_n^j=\nu_{-n}^j$ we have that $P_j$ can be written as
\begin{align*}
	P_j(z)=z \prod_{n\in\mathbb{N}^*}\left(1-iz\left(\frac{1}{\overline{\nu}_n^j}+\frac{1}{\overline{\nu}_{-n}^j}\right)
	-z^2\frac{1}{\overline{\nu}_n^j\overline{\nu}_{-n}^j}\right) =z\prod_{n\in\mathbb{N}^*}\left(1-iz\frac{2\Re(\nu_n^j)}{|\nu_n^j|^2}-z^2\frac{1}{|\nu_{-n}^j|^2}\right).	
\end{align*}

Since both $\displaystyle\sum_{n\in\mathbb{N}^*}\frac{\Re(\nu_n^j)}{|\nu_n^j|^2}$ and $\displaystyle\sum_{n\in\mathbb{N}^*}\frac{1}{|\nu_n^j|^2}$
are absolutely convergent series, we obtain that the  product $P_j$ is absolutely convergent. Thus, $P$ is an absolute convergent product and the rearrangement of its terms as in \eqref{eq:newp} is allowed.

According to \cite[Corollary 3.4]{martin2013null}, it follows that $P_j$ are sine type functions. Each of these functions is entire and of exponential type $\pi$. Hence, the product $P$ is an entire function of exponential type $\frac{\pi}{|c|}+\frac{\pi}{|1+c|}+\frac{\pi}{|1-c|}$. Moreover, given any $\delta>0$, $P$ will be bounded on the strip $|\Im(z)|<\delta$ by a positive constant $C_\delta$ depending only on $\delta$ and $c$ such that \eqref{eq:preal} is verified.
Furthermore, since according to \eqref{eq:red} we have that 
\begin{align*}
	\Im(i\nu_m^k)|\leq \max\left\{\left|\frac{M}{c}\right|,\,\, \left|\frac{M}{2(c+1)}\right|,\,\, \left|\frac{M}{2(c-1)}\right| \right\}:=M_c,\qquad (m,k)\in S,
\end{align*}
from Schwarz's Lemma applied to the function
\begin{align*}
	G(z)=\frac{1}{C_{M_c+1}}P(z-i\overline{\lambda}_m^k),
\end{align*}
and \eqref{eq:preal} for $\delta=M_c+1$, we deduce that
\begin{align*}
	|P(z)|\leq C_{M_c+1}\left|z+i\overline{\lambda}_m^k\right|,\qquad \left|z+i\overline{\lambda}_m^k\right|<1.
\end{align*}
The last estimate implies that
\begin{equation}\label{eq:x1}
	\left|\frac{P(x)}{x+i\overline{\lambda}_m^k}\right|\leq \frac{2C_{M_c+1}}{1+\left|x+\Im(\lambda_m^k)\right|}, \qquad \left|x+\Im(\lambda_m^k)\right|<1.
\end{equation}
Since for $\left|x+\Im(\lambda_m^k)\right|\geq 1$ we deduce from \eqref{eq:preal} with $\delta=1$ that
\begin{equation}\label{eq:x2}
	\left|\frac{P(x)}{x+i\overline{\lambda}_m^k}\right|\leq \frac{2C_{1}}{1+\left|x+\Im(\lambda_m^k)\right|},
\end{equation}
it follows from \eqref{eq:x1} and \eqref{eq:x2} that \eqref{eq:preal} holds.

To prove \eqref{eq:der}, let us recall that, since $P_j$ is a sine type function, there exists a constant $m_j^1$ such that (see, for instance, \cite[Corollary 1, Section 5, Ch.4]{young2001introduction})
\begin{equation}\label{eq:pder}
	P_j'(-i\overline{\nu}_m^j)\geq m_j^1>0,\qquad m\in \mathbb{Z}^*.
\end{equation}

Moreover, for each $\varepsilon>0$, there exists $m^2_j(\varepsilon)>0$ such that (see, for instance, \cite[Chapter 4, Section 5, Lemma 2]{young2001introduction})
\begin{equation}\label{eq:pp}
	P_j(z)\geq m^2_j(\varepsilon)e^{\pi \Im(z)},\qquad z\in\mathbb{C},\,\,\inf_{n\in\mathbb{Z}^*}\left|z+i\overline{\nu}_n^j\right|>\varepsilon.
\end{equation}
By taking into account \eqref{eq:pder}, we have that
\begin{align}\label{eq:der00}
	\left|P'(-i\overline{\lambda}_m^j)\right|&=\left|P_j'(-i\overline{\nu}_m^j)
\prod_{\substack{l\in\{1,2,3\}\\l\neq j}}c_lP_l\left(-i\frac{\overline{\lambda}_m^j}{c_l}\right)\right| \geq \min_{1\leq j\leq 3}\left\{m_j^1\right\}\,\prod_{\substack{l\in\{1,2,3\}\\l\neq j}}\left|c_lP_l\left(-i\frac{\overline{\lambda}_m^j}{c_l}\right)\right|.
\end{align}

Hence, in order to prove \eqref{eq:der} it remains to evaluate the quantities $P_l\left(-i\frac{\overline{\lambda}_m^j}{c_l}\right)$ with $l\neq j$. Firstly we consider the case $l=1$. Since from \eqref{eq:diff123} in Lemma \ref{lemma:dist1} we deduce that
\begin{align*}
	\left|-i\frac{\overline{\lambda}_m^j}{c_1}+i\overline{\nu}_n^1\right|\geq \frac{|M|}{c_1(1+M^2)}>0,\qquad m,n\in\mathbb{Z}^*,\,\,
	j\in\{2,3\},
\end{align*}
from \eqref{eq:pp} we deduce that
\begin{equation}\label{eq:p1}
	\left|P_1\left(-i\frac{\overline{\lambda}_m^j}{c_1}\right)\right|\geq m_1^2\left(\frac{|M|}{c_1(1+M^2)}\right)e^{-\pi |M/c_1|},\qquad
m\in\mathbb{Z}^*,\,\, j\in\{2,3\}.
\end{equation}
The same argument allows us to deduce that
\begin{equation}\label{eq:p23}
	\left|P_j\left(-i\frac{\overline{\lambda}_m^1}{c_j}\right)\right|\geq m_j^2\left(\frac{|M|}{c_1(1+M^2)}\right)e^{-\pi |M/c_1|},\qquad
m\in\mathbb{Z}^*,\,\, j\in\{2,3\}.
\end{equation}

The evaluation of the remaining quantities, $P_2\left(-i\frac{\overline{\lambda}_m^3}{c_2}\right)$ and $P_3\left(-i\frac{\overline{\lambda}_m^2}{c_3}\right)$, is more difficult, since, according to Lemma \ref{lemma.dist23}, the eigenvalues $\lambda_m^2$ and $\lambda_m^3$ can be very close one to another. We evaluate
$P_3\left(-i\frac{\overline{\lambda}_m^2}{c_3}\right)$ for $c>1$ and $m>0$ only, the other quantities being treated similarly.

Firstly, we notice that Lemma \ref{lemma:sir} ensures that $|\lambda_m^2|\geq \frac{|M|}{M^2+1}\neq 0$ while Lemmas
\ref{lemma:dist20} and \ref{lemma.dist23} imply that $\lambda_m^2\notin \left\{\lambda_n^3,\,\lambda_{-n}^2\,\,:\,\, n\in\mathbb{N}^*\right\}.$ Therefore, we have that
\begin{equation}\label{eq:inmmic}
	\inf_{1\leq m\leq N_\epsilon}\left|P_3\left(-i\frac{\overline{\lambda}_m^2}{c_3}\right)\right|>0,
\end{equation}
where $N_\epsilon$ is given by Lemma \ref{lemma.dist23}. Hence, it remains to evaluate $P_3\left(-i\frac{\overline{\lambda}_m^2}{c_3}\right)$ for
$m>N_\epsilon$. According to Lemma \ref{lemma.dist23} there exists $n_m>m$ such that 
\begin{equation}\label{eq.d3m}
	\frac{\gamma'}{|m|^2} \leq \left|\lambda_m^2-\lambda_{n_m}^3\right|.
\end{equation} 
We define the complex number $\varrho_m$ as follows
\begin{equation}\label{eq.vrrm}
	\varrho_m=\lambda_{m}^2 -i\frac{c-1}{2},
\end{equation}
and we remark that $\Im(\varrho_m)<\Im(\lambda_m^2)$. From Lemma \ref{lemma:inc} and \ref{lemma.dist23} we deduce that 
\begin{equation}\label{eq:oil}
	\begin{cases} 
		\displaystyle \left|\lambda_{-n}^2-\varrho_m\right| \leq \left|\lambda_{-n}^2-\lambda_{m}^2\right|, & n\geq 1,
		\\[5pt]
		\displaystyle  \left|\lambda_{n}^3-\varrho_m\right| \leq \left|\lambda_{n}^3-\lambda_{m}^2\right|, & 1\leq n< n_m,
		\\[5pt]
		\displaystyle  \left|\lambda_{n-1}^3-\varrho_m\right| \leq \left|\lambda_{n}^3-\lambda_{m}^2\right|, & n>n_m+1.
	\end{cases}
\end{equation} 
Inequalities \eqref{eq:oil} imply that there exists $C>0$ such that
\begin{align*}
\left|P_3\left(-i\frac{\overline{\lambda}_m^2}{c_3}\right)\right| &= \left|-i\frac{\overline{\lambda}_m^2}{c-1}\right|\prod_{n\in 		\mathbb{N}^*}\left|\left(1-\frac{\lambda_m^2}{\lambda_n^3}\right) \left(1-\frac{\lambda_m^2}{\lambda_{-n}^2}\right)\right|
\\
&\geq \left|\frac{\lambda_m^2}{c-1} \right|\,\left|1-\frac{\lambda_m^2}{\lambda_{n_m}^3}\right|\,\left|1-\frac{\lambda_m^2}{\lambda_{n_m+1}^3}\right| \,\prod_{n\in \mathbb{N}^*}\left|1-\frac{\varrho_m}{\lambda_{-n}^2}\right| \,\prod_{1\leq n\leq n_m-1}\left|1-\frac{\varrho_m}{\lambda_n^3}\right|\, \prod_{n\geq n_m+2}\left|1-\frac{\varrho_m}{\lambda_{n}^3}\right|
\\
&\geq C \left|\varrho_m \right|\, \left|\lambda_{n_m}^2-\lambda_m^3\right| \,\prod_{n\in \mathbb{N}^*}\left|1-\frac{\varrho_m}{\lambda_{-n}^2}\right| \,
\prod_{n\in\mathbb{N}^*}\left|1-\frac{\varrho_m}{\lambda_n^3}\right| 
\\
&=  C \, \left|\lambda_{n_m}^2- \lambda_m^3\right| \, \left|P_3\left(-i\frac{\overline{\varrho}_m}{c_3}\right)\right|.
\end{align*}
Since we have that there exists $\varepsilon>0$ such that
\begin{align*}
	\left|-i\frac{\overline{\varrho}_m}{c_3}+i\overline{\nu}_n^3\right|\geq \varepsilon,\qquad n\in\mathbb{Z}^*,
\end{align*}
from \eqref{eq:pp} we deduce that
\begin{equation}\label{eq:p23new}
	\left|P_3\left(-i\frac{\overline{\lambda}_m^2}{c_3}\right)\right|\geq C \, m_2^2\left( \varepsilon\right) e^{-\pi |M|/c_3} \left|\lambda_{n_m}^2- \lambda_m^3\right|,\qquad m\in\mathbb{Z}^*.
\end{equation}
Estimates \eqref{eq:inmmic}, \eqref{eq:p23new} and \eqref{eq:di3} imply that there exists $C>0$ such that
\begin{equation}\label{eq:der023}
	\left|P_3\left(-i\frac{\overline{\lambda}_m^2}{c_3}\right)\right|\geq \frac{C}{m^2}, \qquad m\in\mathbb{Z}^*.
\end{equation}
Analogously, we deduce that
\begin{equation}\label{eq:der023new}
	\left|P_2\left(-i\frac{\overline{\lambda}_m^3}{c_2}\right)\right|\geq \frac{C}{m^2}, \qquad m\in\mathbb{Z}^*.
\end{equation}

By using \eqref{eq:p1}, \eqref{eq:p23}, \eqref{eq:der023} and \eqref{eq:der023new}, from \eqref{eq:der00} it follows that \eqref{eq:der} holds true and the proof of the Theorem is complete.
\end{proof}
\end{theorem}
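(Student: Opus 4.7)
The plan is to exploit the asymptotic structure uncovered in \eqref{eq:eig} and \eqref{eq:eigneq}: after dividing by the appropriate speed $c_j\in\{c,c+1,c-1\}$, each of the three families $\nu_n^j=\lambda_n^j/c_j$ behaves like $in+O(1)$ and inherits the conjugate symmetry $\nu_{-n}^j=\bar\nu_n^j$. This should let me rewrite $P$ as $\prod_{j=1}^{3} c_j P_j(z/c_j)$ where each $P_j$ is a sine-type function of exponential type $\pi$, and then deduce everything by pulling properties back through the rescalings.

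\textbf{Step 1 (Part 1).} I would first introduce the rescaled families $\nu_n^j$ with the sign convention that swaps the roles of $\lambda^2$ and $\lambda^3$ for $n<0$ when $j=2,3$, so that $\nu_{-n}^j=\bar\nu_n^j$ and $\nu_n^j=in+O(1)$. Pairing $n$ with $-n$ in the definition of $P_j(z)=z\prod_{n\in\mathbb{Z}^*}(1+z/(i\bar\nu_n^j))$ yields factors of the form $1-2iz\,\Re(\nu_n^j)/|\nu_n^j|^2-z^2/|\nu_n^j|^2$, which give an absolutely convergent product because $\sum 1/n^2<\infty$; this justifies the rearrangement $P(z)=\prod_{j}c_jP_j(z/c_j)$. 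Each $P_j$ is a sine-type function of exponential type $\pi$ (its zeros are a bounded perturbation of $i\mathbb{Z}$, so one can invoke the standard criterion as used in \cite{martin2013null}). Scaling by $1/c_j$ and multiplying gives the exponential type $\pi(1/|c|+1/|1+c|+1/|1-c|)$ claimed for $P$.

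\textbf{Step 2 (Part 2).} Sine-type functions are bounded on horizontal strips, so each $P_j$ is bounded on $|\Im(z)|\le\delta'$; pulling this through the rescalings gives \eqref{eq:preal}. For the sharper bound \eqref{eq:preal2}, I would apply Schwarz's Lemma to $G(z)=P(z-i\bar\lambda_m^k)/C_{\delta}$ on the unit disc: since $-i\bar\lambda_m^k$ is a zero of $P$, one obtains $|P(x)|\le C'|x+i\bar\lambda_m^k|$ whenever $|x+\Im(\lambda_m^k)|<1$, which yields the $1/(1+|x+\Im(\lambda_m^k)|)$ decay near the zero, while for $|x+\Im(\lambda_m^k)|\ge 1$ the bound reduces to dividing \eqref{eq:preal} by a quantity bounded below.

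\textbf{Step 3 (Part 3), the hard part.} Simplicity of the zeros is immediate once \eqref{eq:condob} together with Lemmas \ref{lemma:dist1}, \ref{lemma:dist20}, \ref{lemma.dist23} guarantee that the $-i\bar\lambda_n^j$ are all distinct. For the derivative bound I would split
\[
|P'(-i\bar\lambda_m^j)|=|P_j'(-i\bar\nu_m^j)|\prod_{l\ne j}|c_lP_l(-i\bar\lambda_m^j/c_l)|,
\]
and bound each factor. The factor $|P_j'(-i\bar\nu_m^j)|$ is bounded below uniformly by the standard sine-type estimate. For the cross factors with $l=1$ (or $j=1$, $l\in\{2,3\}$), Lemma \ref{lemma:dist1} puts the evaluation point at uniform distance from every zero of $P_l$, so the usual sine-type lower bound off fixed disks around its zeros gives a positive constant. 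The genuinely dangerous case is $j,l\in\{2,3\}$, $l\ne j$, because Lemma \ref{lemma.dist23} allows one zero $\lambda_{n_m}^k$ of $P_l$ (suitably interpreted) to approach $\lambda_m^j$ at speed $1/m^2$. To cope, I would use the shift trick from the excerpt: replace $\lambda_m^j$ by $\varrho_m=\lambda_m^j-i(c-1)/2$ (or its symmetric analogue), which Lemmas \ref{lemma:inc} and \ref{lemma.dist23} show is uniformly far from all zeros of $P_l$. Factoring out only the single nearby term gives
\[
|P_l(-i\bar\lambda_m^j/c_l)|\ge C\,|\lambda_m^j-\lambda_{n_m}^k|\,|P_l(-i\bar\varrho_m/c_l)|,
\]
the last factor is uniformly bounded below by the sine-type off-zeros estimate, and \eqref{eq:di0}--\eqref{eq:di3} provide the $\gamma'/m^2$ lower bound on $|\lambda_m^j-\lambda_{n_m}^k|$, producing \eqref{eq:der}.

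\textbf{Main obstacle.} The entire difficulty is concentrated in the last step. Because the asymptotic spectral gap between the second and third families vanishes like $1/m^2$, there is no uniform lower bound for $|P_l(-i\bar\lambda_m^j/c_l)|$ of the crude sine-type kind; the shift-and-extract-one-factor construction is what rescues the estimate, and checking that the shifted point $\varrho_m$ really is at uniform distance from every relevant zero (which requires the monotonicity and ordering of imaginary parts established in Lemma \ref{lemma:inc}, combined with the pairing information from Lemma \ref{lemma.dist23}) is where one must be careful.
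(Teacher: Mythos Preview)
Your proposal is correct and follows essentially the same route as the paper's proof: the same rescaling to $\nu_n^j$ with the sign swap for $j=2,3$ and $n<0$, the same factorisation $P(z)=\prod_j c_jP_j(z/c_j)$ into sine-type functions, the same Schwarz-lemma argument for \eqref{eq:preal2}, and the same shift trick $\varrho_m=\lambda_m^j-i\frac{c-1}{2}$ to extract the single near-colliding factor in the $j,l\in\{2,3\}$ case. The paper carries out the comparison inequalities \eqref{eq:oil} (between the products at $\lambda_m^2$ and at $\varrho_m$) a bit more explicitly than your sketch, but the content and the place where the care is needed are exactly as you describe.
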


\noindent The previous theorem allows us to construct the biorthogonal family we were looking for.

\begin{theorem}\label{te:bio} 
Let $c\in \mathbb{R}\setminus\{-1,0,1\}$ and $T>2\pi\left( \frac{1}{|c|}+\frac{1}{|c-1|}+\frac{1}{|c+1|}\right)$. There exist a biorthogonal sequence $\left(\theta_m^k\right)_{(m,k)\in S}$ to the family of complex
exponentials $\left(e^{-\lambda_n^j t}\right)_{(n,j)\in S}$ in $L^2\left(-\frac{T}{2},\frac{T}{2}\right)$ and a positive constant $C$ with the property that
\begin{equation}\label{eq:biono}
	\left\|\sum_{(m,k)\in S}\beta_m^k\theta_m^k\right\|^2_{L^2\left(-\frac{T}{2},\frac{T}{2}\right)}\leq C \sum_{(m,k)\in S}m^4\left|\beta_m^k\right|^2,
\end{equation}
for any finite sequence of complex numbers $\left(\beta_m^k\right)_{(m,k)\in S}$. 
\end{theorem}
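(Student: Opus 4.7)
The plan is to construct each $\theta_{m}^{k}$ as the inverse Fourier transform of a suitable entire function built from $P$, and then establish \eqref{eq:biono} through Plancherel's identity combined with a Schur-type estimate supported by the gap analysis of Section \ref{spectrum_sect}.

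Set $a=\pi\left(\frac{1}{|c|}+\frac{1}{|1-c|}+\frac{1}{|1+c|}\right)$ and fix $\tau\in\bigl(0,(T/2-a)/2\bigr]$, which is nonempty since $T>2a$. Introduce the multiplier
\[
K(z)=\left(\frac{\sin(\tau z)}{\tau z}\right)^{2},
\]
entire of exponential type $2\tau$, with $K(0)=1$ and $|K(u+iv)|\leq C(1+u^{2})^{-1}$ on each horizontal strip $\{|v|\leq\delta\}$. Then define
\[
\Theta_{m}^{k}(z)=\frac{P(z)\,K(z+i\overline{\lambda}_{m}^{k})}{P'(-i\overline{\lambda}_{m}^{k})\,(z+i\overline{\lambda}_{m}^{k})},\qquad (m,k)\in S.
\]
By Theorem \ref{te:pprod}, $\Theta_{m}^{k}$ is entire of exponential type at most $a+2\tau\leq T/2$, and since $P$ vanishes simply at every $-i\overline{\lambda}_{n}^{j}$ one immediately obtains $\Theta_{m}^{k}(-i\overline{\lambda}_{n}^{j})=\delta_{mk}^{nj}$. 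Combining \eqref{eq:preal2}, \eqref{eq:der}, and the decay of $K$, I will derive the pointwise bound
\[
|\Theta_{m}^{k}(x)|\leq \frac{C\,m^{2}}{(1+|x+\Im(\lambda_{m}^{k})|)^{3}},\qquad x\in\mathbb{R},
\]
using that the imaginary offset $\Re(\lambda_{m}^{k})$ is uniformly bounded by \eqref{eq:eig}. In particular $\Theta_{m}^{k}\in L^{2}(\mathbb{R})$, so by Paley--Wiener there exists $\theta_{m}^{k}\in L^{2}(-T/2,T/2)$ whose Fourier transform equals $\Theta_{m}^{k}$; biorthogonality follows from \eqref{eq:cotr}, and a direct integration yields $\|\theta_{m}^{k}\|_{L^{2}(-T/2,T/2)}^{2}\leq C\,m^{4}$.

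To derive \eqref{eq:biono}, Plancherel recasts the left-hand side as $\frac{1}{2\pi}\int_{\mathbb{R}}\bigl|\sum_{(m,k)\in S}\beta_{m}^{k}\Theta_{m}^{k}(x)\bigr|^{2}dx$. Expanding and invoking the pointwise bound gives
\[
\bigl|\langle\Theta_{m}^{k},\Theta_{n}^{j}\rangle_{L^{2}(\mathbb{R})}\bigr|\leq \frac{C\,m^{2}n^{2}}{(1+|x_{m}^{k}-x_{n}^{j}|)^{3}},\qquad x_{m}^{k}:=-\Im(\lambda_{m}^{k}).
\]
Setting $u_{m}^{k}:=m^{2}|\beta_{m}^{k}|$, Schur's test then reduces \eqref{eq:biono} to the uniform summability
\[
\sup_{(m,k)\in S}\ \sum_{(n,j)\in S}\frac{1}{(1+|x_{m}^{k}-x_{n}^{j}|)^{3}}<\infty.
\]
This is the main technical obstacle, and I plan to verify it by splitting the sum according to the three subfamilies and invoking the gap results of Lemmas \ref{lemma:dist1}--\ref{lemma.dist23}. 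The asymptotics \eqref{eq:eigneq}--\eqref{eq:eig} show that for each $k\in\{1,2,3\}$ the sequence $(x_{m}^{k})_{m\in\mathbb{Z}^{*}}$ has bounded density (slopes $|c|$, $|1+c|$, $|1-c|$), so the bulk tail sums like $\sum n^{-3}$; the anomalously close pairs from Lemma \ref{lemma.dist23} lie at distance at least $\gamma'/m^{2}$ but each cluster consists of only two points, contributing $O(1)$; finally, the convention \eqref{eq:condob} eliminates the unique double eigenvalue that would otherwise arise when $c\in\mathcal{V}$. With these ingredients in place, \eqref{eq:biono} follows.
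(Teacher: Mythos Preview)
Your construction is correct and yields the stated result, but it follows a somewhat different path than the paper's proof. The paper defines the crude biorthogonal $\widehat{\theta}_m^j(z)=P(z)/\bigl(P'(-i\overline{\lambda}_m^j)(z+i\overline{\lambda}_m^j)\bigr)$ in the critical time $T'=2\pi\bigl(\tfrac{1}{|c|}+\tfrac{1}{|c-1|}+\tfrac{1}{|c+1|}\bigr)$, obtains only the individual bounds $\|\theta_m^j\|_{L^2(-T'/2,T'/2)}\leq Cm^2$ from Theorem~\ref{te:pprod}, and then invokes Kahane's argument \cite{kahane1962pseudo} (also \cite[Proposition~8.3.9]{tucsnak2009observation}) as a black box to upgrade, for any $T>T'$, these individual bounds to the Bessel-type inequality \eqref{eq:biono}. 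You instead exploit the excess time $T-T'>0$ directly by inserting the Fej\'er-type multiplier $K$, which buys the extra quadratic decay needed to run a Schur test and obtain \eqref{eq:biono} without appeal to an external lemma. Both routes rest on the same estimates \eqref{eq:preal2}--\eqref{eq:der}; yours is more explicit and self-contained, while the paper's is terser.

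One clarification on your Schur step: the finiteness of $\sup_{(m,k)}\sum_{(n,j)}(1+|x_m^k-x_n^j|)^{-3}$ follows not from the \emph{lower} bound $\gamma'/m^2$ on the close pairs of Lemma~\ref{lemma.dist23} (a small gap cannot hurt a sum whose terms are capped by $1$), but from the uniform \emph{upper} bound on the number of points $x_n^j$ in any unit interval. That density bound is exactly what Lemma~\ref{lemma:inc} provides: each of the six branches has asymptotic slope $|c|$, $|1+c|$, or $|1-c|>0$, so contributes at most $O(1)$ points per unit interval, uniformly. With that adjustment in emphasis, your argument goes through.
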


\begin{proof}
We define the entire function
\begin{equation}\label{eq:wht}
	\widehat{\theta}_m^j(z)=\displaystyle \frac{P(z)}{P'(-i\overline{\lambda}_m^j)},
\end{equation} 
and let
\begin{equation}\label{eq:bt}
	\theta_m^j=\displaystyle \frac{1}{2\pi}\int_\mathbb{R} \widehat{\theta}_m^j(x)e^{ixt}\,{\rm dx}.
\end{equation} 

From Theorem \ref{te:pprod} we deduce that $(\theta_m^j)_{(m,j)\in S}$ is a biorthogonal sequence to the family of exponential functions $\Lambda=\left(e^{-\lambda_{n}^jt}\right)_{(n,j)\in S}$ in $L^2\left(-\frac{T'}{2},\frac{T'}{2}\right)$, where  $T'= 2\pi\left( \frac{1}{|c|}+\frac{1}{|c-1|}+\frac{1}{|c+1|}\right)$. Moreover, we have that
\begin{align*}
	\|\theta_m^j\|_{L^2\left(-\frac{T'}{2},\frac{T'}{2}\right)}\leq C\, m^2 \qquad ((m,j)\in S).
\end{align*}

An argument similar to \cite{kahane1962pseudo} (see, also, \cite[Proposition 8.3.9]{tucsnak2009observation}) allows us to prove that, for any $T>T'$, there exists a biorthogonal sequence  $(\theta_m^j)_{(m,j)\in S}$ to  the family of exponential functions $\Lambda=\left(e^{-\lambda_{n}^jt}\right)_{(n,j)\in S}$ in
$L^2\left(-\frac{T}{2},\frac{T}{2}\right)$ such that \eqref{eq:biono} is verified.
\end{proof}

The following immediate consequence of Theorem \ref{te:bio} will be very useful for the controllability problem studied in the next section. 

\begin{corollary}
Let $c\in \mathbb{R}\setminus\{-1,0,1\}$ and $T>2\pi\left( \frac{1}{|c|}+\frac{1}{|c-1|}+\frac{1}{|c+1|}\right)$.  For any finite sequence of scalars $(a_n^j)_{(n,j)\in S}\subset \mathbb{C}$, it holds the inequality
\begin{align}\label{eq:insum}
	\sum_{(n,j)\in S}\frac{\left|a_n^j\right|^2 }{n^4}\leq  C \norm{\sum_{(n,j)\in S}a_{n}^j e^{-\lambda_n^j t}}{L^2\left(-\frac{T}{2},\frac{T}{2}\right)}^2,
\end{align}
\end{corollary}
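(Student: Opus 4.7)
The plan is to derive the inequality \eqref{eq:insum} by a direct duality argument using the biorthogonal family constructed in Theorem \ref{te:bio}. Since the biorthogonality relation $\int_{-T/2}^{T/2}\theta_m^k(t)e^{-\lambda_n^j t}\,dt = \delta^{nj}_{mk}$ allows us to recover each coefficient $a_m^k$ as a pairing against $\theta_m^k$, and since the norms of finite linear combinations of the $\theta_m^k$ are controlled by \eqref{eq:biono}, the stated lower bound should follow as soon as one makes a judicious choice of test coefficients.

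More precisely, I would proceed as follows. Fix a finite sequence $(a_n^j)_{(n,j)\in S}$ and set
\begin{align*}
    f(t) := \sum_{(n,j)\in S}a_n^j e^{-\lambda_n^j t}.
\end{align*}
Biorthogonality yields $a_m^k = \int_{-T/2}^{T/2} f(t)\theta_m^k(t)\,dt$ for every $(m,k)\in S$. Then, weighting each coefficient by $1/m^4$ and interchanging the finite sum with the integral, I would write
\begin{align*}
    \sum_{(m,k)\in S}\frac{|a_m^k|^2}{m^4}
    = \int_{-T/2}^{T/2} f(t)\,\overline{g(t)}\,dt,
    \qquad
    \overline{g(t)} := \sum_{(m,k)\in S}\frac{\overline{a_m^k}}{m^4}\,\theta_m^k(t).
\end{align*}
By Cauchy--Schwarz and the norm estimate \eqref{eq:biono} applied with $\beta_m^k=\overline{a_m^k}/m^4$, one obtains
\begin{align*}
    \sum_{(m,k)\in S}\frac{|a_m^k|^2}{m^4}
    &\leq \norm{f}{L^2(-T/2,T/2)}\cdot\norm{g}{L^2(-T/2,T/2)}\\
    &\leq \norm{f}{L^2(-T/2,T/2)}\cdot\sqrt{C\sum_{(m,k)\in S}m^4\cdot\frac{|a_m^k|^2}{m^8}}
    = \sqrt{C}\,\norm{f}{L^2(-T/2,T/2)}\Bigl(\sum_{(m,k)\in S}\frac{|a_m^k|^2}{m^4}\Bigr)^{1/2}.
\end{align*}
Dividing by the square root of the left-hand side (which may be assumed non-zero) and squaring gives precisely \eqref{eq:insum}.

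There is essentially no obstacle here: all the hard analysis (the $1/n^2$ lower bound on $|P'(-i\bar{\lambda}_m^j)|$ and the construction of the $\theta_m^k$ with the weighted bound \eqref{eq:biono}) has already been done in Theorem \ref{te:pprod} and Theorem \ref{te:bio}. The only small point to be careful about is making sure one applies \eqref{eq:biono} with the right weights so that the factors $m^4$ from the biorthogonal norm cancel against the $1/m^8$ coming from $|\beta_m^k|^2$ to leave exactly the weight $1/m^4$ appearing in \eqref{eq:insum}; this is precisely what the choice $\beta_m^k=\overline{a_m^k}/m^4$ accomplishes.
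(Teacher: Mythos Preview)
Your proposal is correct and follows essentially the same route as the paper: write $\sum_{(n,j)}|a_n^j|^2/n^4$ as the pairing of $f=\sum a_n^j e^{-\lambda_n^j t}$ with a suitable combination of the biorthogonals, apply Cauchy--Schwarz, and invoke \eqref{eq:biono} with $\beta_m^k$ proportional to $a_m^k/m^4$. The only slip is in the biorthogonality relation you quote: in the paper's convention it reads $\int_{-T/2}^{T/2}\theta_m^k(t)\,e^{-\overline{\lambda}_n^j t}\,dt=\delta^{nj}_{mk}$, so the identity should be obtained by pairing $\overline{f}$ (rather than $f$) with $\sum (a_m^k/m^4)\theta_m^k$; after this cosmetic correction your argument coincides with the paper's.
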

\begin{proof} 
By taking into account the orthogonality properties of $\left(\theta_m^k\right)_{(m,k)\in S}$ we deduce that
\begin{align*}
	\sum_{(n,j)\in S}\frac{\left|a_n^j\right|^2}{n^4} &= \int_{-\frac{T}{2}}^{\frac{T}{2}}
	\overline{\left( \sum_{(n,j)\in S} a_n^j e^{-\lambda_n^j t}\right)}\left( \sum_{(m,k)\in S} \frac{a_m^k }{m^4}\theta_m^k(t)\right)\,{\rm dt} 
	\\
	&\leq \norm{\sum_{(n,j)\in S}a_{n}^j e^{-\lambda_n^j t}}{L^2\left(-\frac{T}{2},\frac{T}{2}\right)}\, \norm{\sum_{(m,k)\in S}\frac{a_{m}^k}{m^4}\theta_m^k}{L^2\left(-\frac{T}{2},\frac{T}{2}\right)},
\end{align*}
from which, by taking into account \eqref{eq:biono}, we easily deduce \eqref{eq:insum}. 
\end{proof}

\section{Controllability results}\label{control_sect}

In this section we study the controllability properties of equation \eqref{wave_mem}, and we prove that it is memory-type null-controllable if the initial data $(y^0,y^1)$ are smooth enough.  We start by reducing our original problem to a moment problem which, in a second moment, we will solve with the help of the biorthogonal sequence that we constructed in Section \ref{bio_sec}. Let us begin with the following result concerning the solutions of \eqref{wave_mem_adj_syst_cv}.

\begin{lemma}\label{sol_adj_lemma}
For each initial data 
\begin{align}\label{in_data_adj}
	\left(\begin{array}{c}
		\varphi(T,x)\\ \varphi_t(T,x)\\ \psi(T,x)
	\end{array}\right) = \left(\begin{array}{c}
	\varphi^0(x)\\ \varphi^1(x)\\ \psi^0(x)
	\end{array}\right) = \sum_{(n,j)\in S} b_n^{\,j}\Psi_n^j(x) \in \Lp\times\Hs{-1}\times\Lp, 
\end{align} 
there exists a unique solution of equation \eqref{wave_mem_adj_syst_cv} given by  	
\begin{align}\label{sol_adj}
	\left(\begin{array}{c} \varphi(t,x)\\ \varphi_t(t,x)\\ \psi(t,x)\end{array}\right) = \sum_{(n,j)\in S} b_n^{\,j}e^{\lambda_n^j(T-t)}\Psi_n^j(x).
\end{align}
\end{lemma}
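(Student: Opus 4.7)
The plan is to build the solution as a series expansion in the eigenbasis provided by Theorem \ref{te:lari}, exploiting the fact that every $\Psi_n^j$ is an eigenvector of $\mathcal A_c$ with eigenvalue $\lambda_n^j$. Once existence via the series is established, uniqueness and continuous dependence follow from the Riesz basis property together with the uniform boundedness of the real parts of the eigenvalues.

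First, I would apply Theorem \ref{te:lari} with $\sigma=0$ to conclude that the family $\left(\Psi_n^j\right)_{(n,j)\in S}$ is a Riesz basis of $X_0=\Lp\times\Hs{-1}\times\Lp$. Thus, for any datum $(\varphi^0,\varphi^1,\psi^0)\in X_0$ there exists a unique sequence of coefficients $\left(b_n^j\right)_{(n,j)\in S}\in \ell^2(S)$ satisfying
\begin{align*}
    \sum_{(n,j)\in S}\left|b_n^j\right|^2 \simeq \norm{(\varphi^0,\varphi^1,\psi^0)}{X_0}^2.
\end{align*}
This justifies writing the initial data as in \eqref{in_data_adj}.

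Next, I would define the candidate solution by \eqref{sol_adj} and verify that the series converges in $C([0,T];X_0)$. The crucial ingredient here is that, because of \eqref{eq:eig} and \eqref{eq:eigneq}, we have $\Re(\lambda_n^j)=\Re(\mu_{|n|}^j)$, and the three families $\left(\Re(\mu_n^j)\right)_{n\geq 1}$, $j\in\{1,2,3\}$, are bounded. Consequently there exists a constant $K=K(M,T)>0$ such that $\left|e^{\lambda_n^j(T-t)}\right|\leq K$ uniformly in $(n,j)\in S$ and $t\in[0,T]$. Combining this bound with the Riesz basis inequality yields
\begin{align*}
    \norm{\left(\varphi(t,\cdot),\varphi_t(t,\cdot),\psi(t,\cdot)\right)}{X_0}^2 \leq C K^2 \sum_{(n,j)\in S}\left|b_n^j\right|^2,
\end{align*}
so the series defines an element of $L^\infty(0,T;X_0)$; a standard Cauchy-argument on the partial sums upgrades this to continuity in time.

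Then I would check that the series indeed solves \eqref{wave_mem_adj_syst_cv}. Writing the equation in its first-order form \eqref{eq:memchanged}, each term $b_n^j e^{\lambda_n^j(T-t)}\Psi_n^j$ satisfies $W'(t)+\mathcal A_c W=-\lambda_n^j b_n^j e^{\lambda_n^j(T-t)}\Psi_n^j+\mathcal A_c\left(b_n^j e^{\lambda_n^j(T-t)}\Psi_n^j\right)=0$, since $\mathcal A_c\Psi_n^j=\lambda_n^j\Psi_n^j$. The terminal condition at $t=T$ is attained by construction. Finally, for uniqueness I would argue that any solution in $C([0,T];X_0)$ can be expanded, at each time, in the Riesz basis, yielding scalar ODEs for the coefficients whose solutions are precisely $b_n^j e^{\lambda_n^j(T-t)}$.

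The main obstacle, and the only genuinely non-routine point, is the verification that the real parts $\Re(\lambda_n^j)$ stay uniformly bounded: this is what prevents the exponentials from blowing up along the imaginary direction of the spectrum and guarantees well-posedness despite the non-dissipative character of $\mathcal A_c$. Once the uniform bound is in hand, the remaining estimates and the termwise differentiation of the series are standard.
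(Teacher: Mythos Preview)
Your proposal is correct and follows essentially the same approach as the paper: both rely on the Riesz basis property from Theorem~\ref{te:lari} to expand the data, and on the observation that $e^{\lambda_n^j(T-t)}\Psi_n^j$ solves \eqref{wave_mem_adj_syst_cv} with terminal datum $\Psi_n^j$. The paper's argument is considerably more terse---it states only these two facts and leaves the convergence, continuous dependence, and uniqueness implicit---whereas you spell out the uniform bound on $\Re(\lambda_n^j)$ and the resulting estimates; this extra care is warranted but does not constitute a different method.
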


\begin{proof}
Firstly, let us notice that, since according to Theorem \ref{te:lari}, $(\Psi_n^j)_{(n,j)\in S}$ is a Riesz basis, each initial data in $ \Lp\times\Hs{-1}\times\Lp$ can be given in the form \eqref{in_data_adj} with $(b_n^{\,j})_{(n,j)\in S}\in\ell^2$. Then, the proof is finished if we remark that, for $(n,j)\in S$, if we consider as initial data $\Psi_n^j$, the solution to \eqref{wave_mem_adj_syst_cv} is given by 
\begin{align}\label{sol_adj_expr}
	\left(\begin{array}{c} \varphi(t,x)\\ \varphi_t(t,x)\\ \psi(t,x)\end{array}\right) = e^{\lambda_n^j(T-t)}\Psi_n^j(x).
\end{align}
\end{proof}

\noindent We have the following result which reduces the controllability problem to a problem of moments.
\begin{lemma}\label{moment_lemma}
Let $0\leq\sigma<\infty$. The equation \eqref{wave_mem} is memory-type null controllable at time $T$ if, for each $(y^0,y^1)\in\Hs{\sg+1}\times\Hs{\sg}$,  
\begin{align}\label{data_dec}
	y^0(x) = \sum_{n\in\ZZ^*} y_0^ne^{inx},\quad y^1(x) = \sum_{n\in\ZZ^*} y_1^ne^{inx},
\end{align}
there exists $\widehat{u}\in L^2(Q)$ such that the following relations hold
\begin{align}\label{moment_pb1}
	\int_0^T\int_{\omega_0} \widehat{u}(t,x)e^{-inx}e^{-\bar{\lambda}_n^jt}\,{\rm dx\,dt} = -2\pi\Big(\bar{\mu}_{|n|}^jy_n^0 + y_n^1\Big),\qquad (n,j)\in S,\\
	\int_0^T\int_{\omega_0} \widehat{u}(t,x) e^{-\bar{\lambda}_n^jt}\,{\rm dx\,dt} = 0,\qquad (n,j)\in S.\label{moment_pb2}
\end{align} 
\end{lemma}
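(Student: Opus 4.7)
I would view this reduction as the composition of three moves: the duality characterisation of Lemma \ref{control_id_lemma_syst}, the Riesz basis decomposition of the adjoint data provided by Theorem \ref{te:lari}, and an explicit computation on each basis mode. By Lemma \ref{control_id_lemma_syst}, memory-type null controllability of \eqref{wave_mem_syst_cv} is equivalent to producing $\tilde u\in L^2(Q)$ satisfying \eqref{eq:m01} for which \eqref{control_id_cv} holds against every terminal adjoint datum $(p^0,p^1,q^0)\in\Lp\times\Hs{-1}\times\Lp$. Since Theorem \ref{te:lari} guarantees that $(\Psi_n^j)_{(n,j)\in S}$ is a Riesz basis of this product space and both sides of \eqref{control_id_cv} are continuous in the terminal datum, I only need to verify \eqref{control_id_cv} when that datum is a single basis vector $\Psi_n^j$, for each $(n,j)\in S$.

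The next step is to substitute the explicit adjoint solution $\varphi(t,x)=e^{\lambda_n^j(T-t)}e^{inx}$, $\varphi_t(t,x)=-\lambda_n^j\,e^{\lambda_n^j(T-t)}e^{inx}$ provided by Lemma \ref{sol_adj_lemma} into \eqref{control_id_cv}, and to expand $y^0,y^1$ via \eqref{data_dec}. The spatial integrals on the right-hand side single out the $n$-th Fourier coefficient of $y^0,y^1$, and after pulling the common exponential $e^{\bar\lambda_n^j T}$ out of both sides the right-hand side reduces to $-2\pi(\bar\lambda_n^j y_n^0+y_n^1+icn\,y_n^0)$. Using the eigenvalue splitting $\bar\lambda_n^j+icn=\bar\mu_{|n|}^j$ from \eqref{eq:eigneq}, this collapses to $-2\pi(\bar\mu_{|n|}^j y_n^0+y_n^1)$, which is exactly the right-hand side of \eqref{moment_pb1}; the cancellation of $e^{\bar\lambda_n^j T}$ on the left yields the desired moment equations. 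The additional family \eqref{moment_pb2} is the packaging of the zero-mean constraint \eqref{eq:m01} into moment form: writing $\tilde u=\mathbf{1}_{\omega_0}\hat u$ and $g(t):=\int_{\omega_0}\hat u(t,x)\,{\rm dx}$, the relations \eqref{moment_pb2} assert that $g$ is $L^2(0,T)$-orthogonal to each exponential $e^{-\bar\lambda_n^j t}$, and the moment construction to be carried out in Section \ref{control_sect} will produce $\hat u$ as a time-dependent combination (using the biorthogonal family of Theorem \ref{te:bio}) multiplied by a fixed spatial profile of zero mean on $\omega_0$, which makes $g\equiv 0$ identically and thereby realises \eqref{eq:m01}.

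The main obstacle here is almost purely bookkeeping: matching the sesquilinear duality conventions in the pairing $\langle \cdot,\cdot\rangle_{\Hs{1},\Hs{-1}}$, the $2\pi$ normalisations of the Fourier expansion on $\mathbb{T}$, and the complex conjugates connecting \eqref{data_dec}, Lemma \ref{sol_adj_lemma} and \eqref{control_id_cv}, so that the algebraic identity $\bar\lambda_n^j+icn=\bar\mu_{|n|}^j$ delivers precisely the coefficient in \eqref{moment_pb1}. The genuinely difficult step, namely actually solving the moment problem \eqref{moment_pb1}--\eqref{moment_pb2} in $L^2(Q)$, is deferred to Section \ref{control_sect}; it is the weight $m^2$ appearing in the biorthogonal bound \eqref{eq:biono} of Theorem \ref{te:bio} that ultimately forces the regularity loss $(y^0,y^1)\in H^3_p\times H^2_p$ stated in Theorem \ref{control_thm}.
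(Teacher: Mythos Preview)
Your reduction of \eqref{control_id_cv} to the moment conditions \eqref{moment_pb1} by testing against the Riesz basis vectors $\Psi_n^j$, using Lemma \ref{sol_adj_lemma} for the explicit solution, and invoking the identity $\bar\lambda_n^j+icn=\bar\mu_{|n|}^j$ from \eqref{eq:eigneq}, is exactly what the paper does. The bookkeeping you flag is indeed the only content of that part.

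Where you diverge from the paper, and where there is a gap, is in the role of \eqref{moment_pb2}. You read \eqref{moment_pb2} as a ``packaging'' of the zero-mean constraint and then lean on the future construction of $\hat u$ in Section \ref{control_sect} (separable, with a zero-mean spatial profile) to conclude $g\equiv 0$. This is problematic for two reasons. First, orthogonality of $g$ to the family $\{e^{-\bar\lambda_n^j t}\}_{(n,j)\in S}$ in $L^2(0,T)$ does not by itself force $g\equiv 0$; you would need completeness of that family, which is not available here. Second, the actual construction in Section \ref{control_sect} proceeds via the abstract existence criterion of Theorem \ref{thm_young}, not by writing $\hat u$ explicitly as biorthogonals times a fixed spatial profile, so the separable form you invoke is not what is produced. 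As written, your argument leaves the passage from $\hat u$ to a zero-mean $\tilde u$ unproved.

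The paper closes this cleanly and self-containedly: given any $\hat u$ satisfying \eqref{moment_pb1}--\eqref{moment_pb2}, set
\[
\tilde u(t,x)=\hat u(t,x)-\frac{1}{|\omega_0|}\int_{\omega_0}\hat u(t,s)\,{\rm ds}.
\]
Then \eqref{eq:m01} holds by construction, and the subtracted term, being constant in $x$, contributes $\frac{1}{|\omega_0|}\int_{\omega_0}e^{-inx}\,{\rm dx}\cdot\int_0^T\!\int_{\omega_0}\hat u\,e^{-\bar\lambda_n^j t}\,{\rm dx\,dt}$ to the left of \eqref{moment_pb1}; this vanishes precisely because of \eqref{moment_pb2}. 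So \eqref{moment_pb2} is not the zero-mean constraint in disguise; it is exactly the extra hypothesis that makes the mean-subtraction harmless for the genuine moment equations. This is the step you should supply in place of the forward reference.
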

\begin{proof} 

First of all, recall that, according to Lemma \ref{control_id_lemma_syst}, equation \eqref{wave_mem} is memory-type null controllable in time $T$ if and only if, for each $(y^0,y^1)\in\Hs{\sg+1}\times\Hs{\sg}$,  there exists $\tilde{u}\in L^2(Q)$ such that \begin{equation}\label{eq:utm0} \int_{-\pi}^\pi  \mathbf{1}_{\omega_0} \tilde{u}(t,x)\,{\rm dx}=0, \qquad t\in (0,T),\end{equation} and the following identity holds 
\begin{align}\label{control_id_cv_2}
	\int_0^T\int_{\omega_0} \tilde{u}(t,x)\bar{\varphi}(t,x)\,{\rm dx\,dt} = \big\langle y^0(\cdot),\varphi_t(0,\cdot)\big\rangle_{\Hs{1},\Hs{-1}} - \int_{\mathbb{T}} (y^1(x)+cy^0_x(x))\bar{\varphi}(0,x)\,{\rm dx},
\end{align}
where, for all $(p^0,p^1,q^0)\in\Lp\times\Hs{-1}\times\Lp$, $(\varphi,\psi)$ is the unique solution to \eqref{wave_mem_adj_syst_cv}. In fact, to verify \eqref{control_id_cv_2} it is sufficient to consider as initial data the elements of the Riesz basis $\left\{\Psi_n^j\right\}_{(n,j)\in S}:$
\begin{align*}
	\left(\begin{array}{c} p^0\\[5pt]p^1-cp_x^0\\[5pt]q^0\end{array}\right) = \Psi_n^j = \left(\begin{array}{c} 1\\[5pt]-\lambda_n^j\\[5pt] \displaystyle \frac{1}{\lambda_n^j-icn}\end{array}\right)e^{inx}.
\end{align*}

Then, according to Lemma \ref{sol_adj_lemma}, the solution to \eqref{wave_mem_adj_syst_cv} can be written in the form \eqref{sol_adj_expr}. Moreover, we can readily check that 
\begin{align*}
    \int_{\mathbb{T}}\big(y^1+cy^0_x\big)\bar{\varphi}(0,x)\,{\rm dx} = \sum_{(n,j)\in S}\big(y_n^1+icn y_n^0\big)e^{\bar{\lambda}_m^jT}\int_{\mathbb{T}} e^{inx}e^{-imx}\,{\rm dx} = 2\pi\big(y_n^1+icn y_n^0\big)e^{\bar{\lambda}_n^jT},
\end{align*}
where we used the orthogonality of the eigenfunctions $e^{inx}$ in $\Lp$. In a similar way, we also have
\begin{align*}
    \big\langle y^0,\varphi_t(0,\cdot)\big\rangle_{\Hs{1},\Hs{-1}} = -\sum_{(n,j)\in S}y_n^0\bar{\lambda}_n^je^{\bar{\lambda}_n^jT},
\end{align*}
and from \eqref{control_id_cv_2} we finally obtain that \eqref{control_id_cv_2} is equivalent to 
\begin{align}\label{moment_pb3}
	\int_0^T\int_{\omega_0} \tilde{u}(t,x)e^{-inx}e^{-\bar{\lambda}_n^jt}\,{\rm dx\,dt} = -2\pi\Big(\bar{\mu}_{|n|}^jy_n^0 + y_n^1\Big),\qquad (n,j)\in S.
\end{align} 
Now, let $\widehat{u}\in L^2(Q)$ verifying \eqref{moment_pb1}-\eqref{moment_pb2} and let us define 
\begin{align*}
	\tilde{u}(t,x)=\widehat{u}(t,x)-\frac{1}{|\omega_0|}\int_{\omega_0}\widehat{u}(t,s)\,{\rm ds},\qquad (t,x)\in Q.
\end{align*}

Clearly, $\tilde{u}\in L^2(Q)$  and \eqref{eq:utm0} is verified. It remains to show that $\tilde{u}$ also satisfies \eqref{moment_pb3}. Indeed, by taking into account  the definition of $\tilde{u}$ and relations \eqref{moment_pb1}-\eqref{moment_pb2}, we deduce that 
\begin{align*}	
	\int_0^T\int_{\omega_0} & \tilde{u}(t,x)e^{-inx}e^{-\bar{\lambda}_n^jt}\,{\rm dx\,dt} 
	\\
	&=\int_0^T\int_{\omega_0} \widehat{u}(t,x)e^{-inx}e^{-\bar{\lambda}_n^jt}\,{\rm dx\,dt} - \frac{1}{|\omega_0|}\int_{\omega_0}e^{-inx}\,{\rm dx}\int_0^T\int_{\omega_0} \tilde{u}(t,x)e^{-\bar{\lambda}_n^jt}\,{\rm dx\,dt}
	\\
	&=-2\pi\left(\bar{\mu}_{|n|}^jy_n^0 + y_n^1\right).
\end{align*}
Hence, $\tilde{u}$ is a control and the proof of the lemma is complete.
\end{proof} 
In order to solve the moment problem \eqref{moment_pb1}-\eqref{moment_pb2}, we shall use the following result (see \cite[Chapter 4, Section 1, Theorem 2]{young2001introduction}).
\begin{theorem}\label{thm_young}
Let $(f_n)_n$ be a sequence of vectors belonging to a Hilbert space $H$ and $(c_n)_n$ a sequence of scalars. In order that the equations
\begin{align*}
	(f,f_n)=c_n
\end{align*}
shall admit at least one solution $f\in H$ for which $\norm{f}{H}\leq M$, it is necessary and sufficient that 
\begin{align}\label{rel_young}
	\left|\sum_n a_n\bar{c}_n\right|\leq M\norm{\sum_n a_nf_n}{H}
\end{align}
for every finite sequence of scalars $(a_n)_n$.
\end{theorem}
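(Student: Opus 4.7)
The plan is to reduce the existence of $f$ to the Riesz representation theorem applied to a suitable linear functional. Condition \eqref{rel_young} is precisely the boundedness required so that the functional determined by the prescribed values $c_n$ on the vectors $f_n$ extends continuously to the whole Hilbert space.

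The necessity direction is immediate. If $f \in H$ satisfies $(f,f_n)=c_n$ and $\norm{f}{H}\leq M$, then for any finite scalar sequence $(a_n)$ one uses conjugate linearity of the inner product and Cauchy--Schwarz to write
\[
\left|\sum_n a_n \bar c_n\right| = \left|\Bigl(\sum_n a_n f_n,\, f\Bigr)\right| \leq \norm{f}{H}\,\norm{\sum_n a_n f_n}{H} \leq M\,\norm{\sum_n a_n f_n}{H}.
\]

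For sufficiency, I would define a linear functional $L$ on the algebraic span $E:=\mathrm{span}\{f_n\}$ by
\[
L\Bigl(\sum_n a_n f_n\Bigr) := \sum_n a_n \bar c_n.
\]
The first step is to check that $L$ is well-defined: if a finite combination $\sum_n a_n f_n$ vanishes, inequality \eqref{rel_young} forces $\sum_n a_n \bar c_n = 0$ as well, so the value of $L$ depends only on the vector, not on the chosen representation. The same inequality gives $|L(v)| \leq M\norm{v}{H}$ for every $v\in E$, so $L$ extends uniquely and continuously to the closure $\overline{E}$ with operator norm at most $M$. Extending by zero on $\overline{E}^{\perp}$ produces a bounded linear functional on $H$ of norm $\leq M$, and the Riesz representation theorem then supplies $f \in H$ with $\norm{f}{H}\leq M$ satisfying $L(v) = (v,f)$ for every $v \in H$. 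Evaluating at $v=f_n$ yields $(f_n,f) = \bar c_n$, equivalently $(f,f_n)=c_n$, as required.

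The main (and essentially only) pitfall is bookkeeping of the complex conjugations, so that the extension step produces $f$ and not $\bar f$; there is no deeper analytic difficulty, since the substance of the theorem has been encoded into the hypothesis \eqref{rel_young}. In the controllability application of the present paper, verifying that hypothesis is precisely the content of the biorthogonal estimate \eqref{eq:insum}, which is why Theorem \ref{thm_young} is exactly the tool needed to close the moment problem \eqref{moment_pb1}--\eqref{moment_pb2}.
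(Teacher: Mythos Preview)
Your proof is correct and is the standard argument via the Riesz representation theorem. Note, however, that the paper does not supply its own proof of this statement: Theorem~\ref{thm_young} is quoted verbatim from Young's textbook \cite[Chapter~4, Section~1, Theorem~2]{young2001introduction} and used as a black box to solve the moment problem, so there is no in-paper proof to compare against. Your argument is essentially the one found in that reference.
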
 

\noindent We can now pass to prove the main controllability result.

\begin{proof}[Proof of Theorem \ref{control_thm}]
According to Lemma \ref{moment_lemma}, it is sufficient to show that, for each initial data $(y^0,y^1)\in\Hs{3}\times\Hs{2}$ given by \eqref{data_dec}, there exists $\widehat{u}\in L^2(Q)$ such that \eqref{moment_pb1}-\eqref{moment_pb2} are satisfied. From Theorem \ref{thm_young}, this is equivalent to show that the following inequality holds
\begin{align}\label{young_ineq}
	\left| \sum_{(n,j)\in S} (\mu_{|n|}^j\bar{y}_n^0+\bar{y}_n^1)a_n^j\right|^2 \leq C\int_0^T\int_{\omega_0} \left|\sum_{(n,j)\in S} b_n^j e^{-\lambda_n^jt}+\sum_{(n,j)\in S} a_n^je^{inx}e^{-\lambda_n^jt}\right|^2\,{\rm dx\,dt},
\end{align}
for all finite sequences $(a_n^j)_{(n,j)\in S}\cup (b_n^j)_{(n,j)\in S}\subset \mathbb{C}$. To this end, we notice that
\begin{align}\label{ineq1}
	\left| \sum_{(n,j)\in S} (\mu_{|n|}^j\bar{y}_n^0+\bar{y}_n^1)a_n^j\right|^2 &\leq \left(\sum_{(n,j)\in S} n^4\left|\mu_{|n|}^j\bar{y}_n^0+\bar{y}_n^1\right|^2\right)\left(\sum_{(n,j)\in S} \frac{|a_n^j|^2}{n^4}\right) \notag 
	\\
	&\leq C\norm{(y^0,y^1)}{\Hs{3}\times\Hs{2}}^2 \left(\sum_{(n,j)\in S} \frac{|a_n^j|^2}{n^4}\right).
\end{align}

On the other hand, by using \eqref{eq:insum} and taking into account that we can have at most one double eigenvalue $\lambda^2_{-n_c}$ (if $c\in {\mathcal V}$, see Lemma \ref{lemma:dist20}), we deduce that
\begin{align*}
	\int_0^T\int_{\omega_0} &\left|\sum_{(n,j)\in S} b_n^j e^{-\lambda_n^jt}+ \sum_{(n,j)\in S} a_n^j e^{inx}  e^{-\lambda_n^jt}\right|^2\,{\rm dx\,dt} 
	\\
	=&\; \int_{\omega_0}\int_{-\frac{T}{2}}^{\frac{T}{2}}\left| \sum_{(n,j)\in S}\left( a_n^je^{inx} +b_n^j\right)e^{\lambda_n^j \frac{T}{2}}   e^{-\lambda_n^j t}\right|^2\,{\rm dt\,dx}
	\\
	\geq&\; C \Bigg(
	\sum_{(n,j)\in S\setminus\{(-n_c,2),\,(n_c,3)\}} \frac{1}{n^4}  \int_{\omega_0} \left|\left(a_n^je^{inx}+b_n^j\right) e^{\lambda_n^j \frac{T}{2}} \right|^2 \,{\rm dx}
	\\
	&+\displaystyle \int_{\omega_0} \left|a_{-n_c}^2 e^{\lambda_{-n_c}^2 \frac{T}{2}} e^{-in_cx} +  a_{n_c}^3 e^{\lambda_{n_c}^3 \frac{T}{2}} e^{in_cx}+b_{-n_c}^2 e^{\lambda_{-n_c}^2 \frac{T}{2}} + b_{n_c}^3 e^{\lambda_{n_c}^3 \frac{T}{2}} \right|^2 \,{\rm dx}\Bigg).
\end{align*} 

Let us mention that, if $c\notin{\mathcal V}$, all the eigenvalues are simple and the separation of the second term in the last inequality is not needed. Since the maps
\begin{align*}
	&\CC^2\ni(a,b)\mapsto \left(\int_{\omega_0}\left|ae^{inx} + b\right|^2\,{\rm dx}\right)^{\frac 12},\\ \\
	&\CC^3\ni(a',a'',b)\mapsto \left(\int_{\omega_0}\left|a'e^{-in_cx}  + a''e^{in_cx} +b\right|^2\,{\rm dx}\right)^{\frac 12},
\end{align*}
are norms in $\CC^2$ and $\CC^3$, respectively, it follows that 
\begin{align}\label{ineq2}
	\int_0^T\int_{\omega_0}\left|\sum_{(n,j)\in S} b_n^j e^{-\lambda_n^jt}+ \sum_{(n,j)\in S} a_n^j e^{inx}  e^{-\lambda_n^jt}\right|^2\,{\rm dx\,dt}  \geq C \sum_{(n,j)\in S}\frac{|a_n^j|^2}{n^4}.
\end{align}
From \eqref{ineq1} and \eqref{ineq2} we immediately obtain \eqref{young_ineq}. Our proof is then concluded.
\end{proof}

\begin{remark}
We have obtained that \eqref{wave_mem} is controllable if the control time is larger than
\begin{align}\label{eq:t00}
	T_0=2\pi\left(\frac{1}{|c|}+\frac{1}{|1-c|}+\frac{1}{|1+c|}\right).
\end{align}

This result is a consequence of the particular construction on the biorthogonal sequence in Theorem \ref{te:bio}. To determine the optimal control time remains an interesting open problem. In particular, notice that the minimal control time should depend on the set $\omega_0$. 
\
\
However, we can show that, in the limiting case in which the support of the control is reduced to one point (or it is of the form $u(t,x)=b(x+ct)v(t)$ with $b$ given), the minimal control time is precisely $T_0$.  Indeed, in this case, the controllability on time $T$ property implies the existence of a nontrivial  entire function  $G$  such that 
\begin{itemize}
	\item $G(-\overline{\lambda}_n^j)=0$ for each $(n,j)\in S$;
	\item $G$ is of exponential type $T$;
	\item $G$ belongs to $L^2$ on the real axis.
\end{itemize}
Under these hypotheses, $G$ can be factorized as 
\begin{align*}
	G(z)=\frac{1}{z^3}P(z)M(z),
\end{align*} 
where $P$ is the infinite product defined by \eqref{eq:prod} and $M$ is an entire function. According to Theorem \ref{te:pprod}, $P$ is an entire function of exponential type $T_0$. Moreover, from the definition of a sine-type function, we have that 
\begin{equation}\label{eq:ppp1}
	|P(z)|\geq C \exp\left( T_0\,  |\Im\, z|\right),
\end{equation}
for any $z\in\mathbb{C}$ with $|\Im\, z|$ larger than a constant $H>0$ and \eqref{eq:pp} ensures the existence of a constant $C>0$ and a sequence of positive numbers $(r_n)_{n\geq 1}$ such that $\lim_{n\rightarrow \infty }r_n=\infty$ and 
\begin{equation}
	|P(z)|\geq C, \qquad z\in \left\{z\in\mathbb{C}\,:\, |\Im\, z|\leq H,\,\, |z|=r_n,\,\, n\geq 1\right\}.
\end{equation}
On the other hand, the properties of $G$ and the Paley-Wiener theorem imply that 
\begin{equation}\label{eq:pppp2}
	|G(z)|\leq C, \exp\left( T \, |\Im\, z|\right)\qquad z\in\mathbb{C}.
\end{equation}
Let us suppose that $T\leq T_0$. From \eqref{eq:ppp1}-\eqref{eq:pppp2} it follows that 
\begin{equation}
	|M(z)|\leq C |z|^3, \qquad z\in\mathbb{C},\,\, | z |=r_n,\,\, n\geq 1.
\end{equation}

By using the Cauchy's differentiation formula we can deduce that $M$ is a polynomial function and, consequently, the exponential type $T$ of $G$ has to be equal to $T_0$. Hence, we have shown that $T\geq T_0$ and, in this context, $T_0$ given by \eqref{eq:t00} does represent the minimal control time. 
\end{remark}

\begin{remark} Let us mention that the space of controllable initial data is larger than the one given by Theorem \ref{control_thm}. This is a consequence of the fact that  the small weight $\frac{1}{n^4}$ in inequality  \eqref{ineq2} affects only some terms in the right hand side series. However, it is not easy to identify a larger classical space of controllable initial data than $\Hs{3}\times\Hs{2}$.
\end{remark}

\begin{remark}
We remark that our method for proving Theorem \ref{control_thm} is based on the problem of moments \eqref{moment_pb1}-\eqref{moment_pb2} and the construction of a biorthogonal sequence in Theorem \ref{te:bio}. This approach does not provide an explicit control, and this may become relevant when facing practical applications and numerical issues. On the other hand, we have to mention that, in what concerns the numerical approximation of our control problem, this explicit construction is not necessary. As it is often done in control theory, the function $u$ may be computed through the minimization of a suitable functional associated to our problem. For doing that, we need an accurate discretization of \eqref{wave_mem} and of the adjoint \eqref{wave_mem_adj}. Although this may appear a tricky task due to the presence of an integral term in the equations, this problem may be overcome by working with the equivalent formulations \eqref{wave_mem_syst} and \eqref{wave_mem_adj_syst}. In this way, we are reduced to the discretization of coupled PDE/ODE systems, which may be easily performed by means of classical techniques (see, e.g., \cite{colli2004parallel}). On the other hand, since we are dealing with a wave-type equation, most likely we will have to overcome the difficulties caused by the high-frequency spurious numerical solutions for which the group velocity vanishes and a filtering mechanism will be probably necessary. The efficient numerical approximation of the controls for this system remains an interesting problem which needs further investigation.
\end{remark}

\section{Construction of a localized solution}\label{loc_sol_sect}

The aim of this section is to provide a more physical justification for the lack of controllability of equation \eqref{wave_mem} in the case of a control which is not moving. For doing that, we are going to show that it is possible to construct a quasi-solution to the equation which is localized around a vertical characteristic and which, if the control is not moving, cannot be observed. In what follows, we will always consider $x\in\RR$.

Let us start by computing the characteristics to our equation. With this purpose, we take one time derivative of \eqref{wave_mem}, and we find
\begin{align}\label{wave_mem_der}
	y_{ttt}-y_{xxt}+My_{xx}=0.
\end{align}

The principal part of the above operator is given by the two third order terms, and its symbol in the Fourier variables $(\tau,\xi)$ is
\begin{align*}
	\rho(\tau,\xi) = \tau^3-\tau|\xi|^2 = \tau(\tau^2-|\xi|^2).
\end{align*}

We therefore see that the equation, in addition to the classical characteristics of the wave equation, admits also a vertical characteristic which, for any time $t$, is not propagating in the space variables.

It is nowadays well-known that control properties for a given PDE may be obtained through the observation of its adjoint equation which, we recall, in the present case reads as
\begin{align}\label{wave_mem_adj_R}
	\begin{cases}
		\displaystyle p_{tt}(t,x)-p_{xx}(t,x) + M\int_0^t p_{xx}(s,x)\,{\rm ds} + M q_{xx}^0(x) = 0, & (t,x)\in (0,T)\times\RR
		\\
		p(0,x)=p^0(x),\;\;p_t(0,x)=p^1(x), & x\in\RR
	\end{cases}
\end{align}

In what follows we will show that it is possible to construct a highly oscillating quasi-solution of \eqref{wave_mem_adj_R}, which is localized around the vertical characteristic. This means that the information carried by this solution has no possibility to move toward the control and that, instead, it is the control that shall move toward it.

Taking inspiration from the classical theory of geometric optics (see, e.g., \cite{ralston1982gaussian,rauch2005polynomial}), given any $\varepsilon>0$ small we consider the following ansatz
\begin{align}\label{loc_sol_prel}
	\ue{p}(t,x):= c(\varepsilon)e^{-\frac{1}{\varepsilon^r}(x-x_0)^2}a(t,x), \;\;\; x_0\in\RR,
\end{align}
where $r>0$, $c(\varepsilon)$ is a normalization constant and the function $a(t,x)$ has to be determined. 

With this aim, we ask that $\ue{p}$ satisfies \eqref{wave_mem_adj} up to a small error. We recall that, up to a derivation in the time variable, \eqref{wave_mem_adj_R} is equivalent to the equation
\begin{align}\label{wave_mem_adj_der}
	p_{ttt}-p_{xxt} + Mp_{xx} = 0.
\end{align}
In addition, we can easily compute
\begin{align*}
	&\ue{p}_{ttt} - \ue{p}_{xxt} + M\ue{p}_{xx} 
	\\
	&\;\;= c(\varepsilon) e^{-\frac{1}{\varepsilon^r}(x-x_0)^2}\!\bigg[a_{ttt} - a_{xxt} +  Ma_{xx} + \frac{2}{\varepsilon^r}\Big(a_t-Ma + 2(x-x_0)(a_t-Ma)_x\Big) \!- \frac{4(x-x_0)^2}{\varepsilon^{2r}}(a_t-Ma)\bigg].
\end{align*}
Choosing now $a$ in the form
\begin{align*}
	a(t,x) = e^{\frac i\varepsilon x}e^{Mt-M^2\varepsilon^2t}.
\end{align*}
it is simple a matter of computations to obtain
\begin{align*}
	\ue{p}_{ttt} - \ue{p}_{xxt} + M\ue{p}_{xx} = c(\varepsilon)ae^{-\frac{1}{\varepsilon^r}(x-x_0)^2} \bigg[\mathcal O(\varepsilon^2) + \mathcal O(\varepsilon^{2-r}) + \mathcal O(\varepsilon^{1-r}) + \mathcal O(\varepsilon^{2-2r})\bigg] = c(\varepsilon)\mathcal O(\varepsilon^{1-r}).
\end{align*}

From the above identity we see that, if we want a good approximation of the solution to \eqref{wave_mem_adj_R}, we shall choose $r<1$. Moreover, apart form this limitation, the choice of $r$ may be arbitrary. Taking, e.g., $r=1/2$, we then have
\begin{align*}
	\ue{p}_{ttt} - \ue{p}_{xxt} + M\ue{p}_{xx} = c(\varepsilon)\mathcal O(\varepsilon^{\frac 12}).
\end{align*}
In view of that, our candidate for a localized solution will be the following:

\begin{align*}
	\ue{p}(t,x) = c(\varepsilon) e^{\frac i\varepsilon x - \frac{1}{\sqrt{\varepsilon}} (x-x_0)^2 + Mt - M^3\varepsilon^2t}.
\end{align*}
The normalization constant $c(\varepsilon)$ is chosen so that
\begin{align*}
	p^\varepsilon(0,x) := p^{\varepsilon,0}(x)= c(\varepsilon) e^{\frac i\varepsilon x - \frac{1}{\sqrt{\varepsilon}} (x-x_0)^2}
\end{align*}
has unitary $H^1(\RR)$-norm (up to some small reminder). To this end, let us compute
\begin{align*}
	\norm{p^{\varepsilon,0}}{H^1(\RR)} = \left(\norm{p^{\varepsilon,0}}{L^2(\RR)}^2 + \norm{p^{\varepsilon,0}_x}{L^2(\RR)}^2\right)^{\frac 12}.
\end{align*}
First of all, we have
\begin{align*}
	\norm{p^{\varepsilon,0}}{L^2(\RR)}^2 = c(\varepsilon)^2\int_\RR e^{- \frac{2}{\sqrt{\varepsilon}} (x-x_0)^2}\,{\rm dx} = c(\varepsilon)^2\frac{\varepsilon^{\frac 14}}{\sqrt{2}}\int_\RR e^{-z^2}\,{\rm dz} = c(\varepsilon)^2 \frac{\sqrt{\pi}}{\sqrt{2}}\,\varepsilon^{\frac 14}.
\end{align*}
Moreover, a simple computation gives
\begin{align*}
	p^{\varepsilon,0}_x = c(\varepsilon)e^{- \frac{1}{\sqrt{\varepsilon}} (x-x_0)^2}\bigg\{-\frac x\varepsilon\sin\left(\frac x\varepsilon\right) - \frac{(x-x_0)^2}{\sqrt{\varepsilon}} \cos\left(\frac x\varepsilon\right) +\left. i\left[\frac x\varepsilon\cos\left(\frac x\varepsilon\right) - \frac{(x-x_0)^2}{\sqrt{\varepsilon}} \sin\left(\frac x\varepsilon\right) \right]\right\}.
\end{align*}
Hence, we easily obtain
\begin{align*}
	\left|p^{\varepsilon,0}_x\right|^2 = \Big[\Re(p^{\varepsilon,0}_x)\Big]^2 + \Big[\Im(p^{\varepsilon,0}_x)\Big]^2 = c(\varepsilon)^2e^{- \frac{2}{\sqrt{\varepsilon}} (x-x_0)^2}\left[\frac 1\varepsilon (x-x_0)^4 + \frac{x^2}{\varepsilon^2}\right],
\end{align*}
and we get
\begin{align*}
	\int_\RR \left|p^{\varepsilon,0}_x\right|^2\,{\rm dx} = c(\varepsilon)^2\int_\RR \frac 1\varepsilon (x-x_0)^4e^{- \frac{2}{\sqrt{\varepsilon}} (x-x_0)^2}\,{\rm dx} + c(\varepsilon)^2\int_\RR \frac{x^2}{\varepsilon^2}e^{- \frac{2}{\sqrt{\varepsilon}} (x-x_0)^2}\,{\rm dx}.
\end{align*}
Let us firstly compute
\begin{align*}
	c(\varepsilon)^2\int_\RR \frac 1\varepsilon (x-x_0)^4e^{- \frac{2}{\sqrt{\varepsilon}} (x-x_0)^2}\,{\rm dx} = \frac{c(\varepsilon)^2\varepsilon^{\frac 14}}{4\sqrt{2}}\int_\RR z^4e^{-z^2}\,{\rm dz} = c(\varepsilon)^2\frac{3\sqrt{\pi}}{16\sqrt{2}}\varepsilon^{\frac 14}.
\end{align*}
Concerning now the second integral, we have
\begin{align*}
	c(\varepsilon)^2\int_\RR \frac{x^2}{\varepsilon^2} e^{- \frac{2}{\sqrt{\varepsilon}} (x-x_0)^2}\,{\rm dx} =&\, c(\varepsilon)^2\frac{\varepsilon^{-\frac 74}}{\sqrt{2}}\int_\RR \left(x_0 + \frac{\varepsilon^{\frac 14}}{\sqrt{2}}z\right)^2 e^{-z^2}\,{\rm dz}
	\\
	=&\, c(\varepsilon)^2\frac{x_0^2}{\sqrt{2}}\varepsilon^{-\frac 74}\int_\RR e^{-z^2}\,dz + c(\varepsilon)^2x_0\varepsilon^{-\frac 32}\int_\RR z e^{-z^2}\,dz 
	\\
	&+c(\varepsilon)^2+ \frac{\varepsilon^{-\frac 54}}{2\sqrt{2}}\int_\RR z^2 e^{-z^2}\,{\rm dz}
	\\
	=&\, c(\varepsilon)^2\frac{x_0^2\sqrt{\pi}}{\sqrt{2}}\varepsilon^{-\frac 74} + c(\varepsilon)^2\frac{\sqrt{\pi}}{4\sqrt{2}} \varepsilon^{-\frac 54}.
\end{align*}
Thus,
\begin{align*}
	\int_\RR |p^{\varepsilon,0}_x|^2\,{\rm dx} = c(\varepsilon)^2\left(\frac{3\sqrt{\pi}}{16\sqrt{2}}\varepsilon^{\frac 14} + \frac{x_0^2\sqrt{\pi}}{\sqrt{2}}\varepsilon^{-\frac 74} + \frac{\sqrt{\pi}}{4\sqrt{2}} \varepsilon^{-\frac 54}\right),
\end{align*}
and, adding all the contributions, we get
\begin{align*}
	\norm{p^{\varepsilon,0}}{H^1(\RR)} = c(\varepsilon)\left(\frac \pi2\right)^{\frac 14}\left(\frac{19}{6}\varepsilon^{\frac 14} + x_0^2\varepsilon^{-\frac 74} + \frac 14 \varepsilon^{-\frac 54}\right)^{\frac 12}.
\end{align*}
We now have to distinguish two cases. If $x_0\neq 0$, from the above computations we obtain
\begin{align*}
	\norm{p^{\varepsilon,0}}{H^1(\RR)} = c(\varepsilon)x_0\left(\frac \pi2\right)^{\frac 14}\varepsilon^{-\frac 78}\left(1 + \mathcal O(\varepsilon^{\frac 14})\right).
\end{align*}
On the other hand, if $x_0=0$ we instead have
\begin{align*}
	\norm{p^{\varepsilon,0}}{H^1(\RR)} = c(\varepsilon)\left(\frac{\pi}{32}\right)^{\frac 14}\varepsilon^{-\frac 58}\left(1 + \mathcal O(\varepsilon^{\frac 34})\right).
\end{align*}
Therefore, choosing
\begin{align}\label{norm_const}
c(\varepsilon) = \begin{cases}
\displaystyle \frac{1}{x_0}\left(\frac 2\pi\right)^{\frac 14}\varepsilon^{\frac 78}, & x_0\neq 0,
\\[10pt]
\displaystyle\left(\frac{32}{\pi}\right)^{\frac 14}\varepsilon^{\frac 58}, & x_0=0,
\end{cases}
\end{align}
we finally have that $\norm{p^{\varepsilon,0}}{H^1(\RR)} \to 1$  as $\varepsilon\to 0.$

The construction that we just presented provided us a candidate for an approximate solution to \eqref{wave_mem_adj_R}, which is concentrated along the vertical characteristic $x(t)=x_0$. We now need to rigorously prove this fact. In what follows, for the sake of simplicity we will always assume that $x_0\neq 0$. If $x_0=0$, the only difference will be in the normalization constant \eqref{norm_const}, but the results that we are going to present will still hold.

\begin{theorem}
    For any $x_0\in\RR^\ast=\RR\setminus\{0\}$ and $\varepsilon>0$, let the functions $\ue{p}$ be defined as
    \begin{align}\label{pe}
    \ue{p}(t,x):= \frac{1}{x_0}\left(\frac 2\pi\right)^{\frac 14}\varepsilon^{\frac 78} e^{\frac i\varepsilon x - \frac{1}{\sqrt{\varepsilon}} (x-x_0)^2 + Mt - M^3\varepsilon^2t}.
    \end{align}
    \begin{enumerate}
        \item The $\ue{p}$ are approximate solutions to \eqref{wave_mem_adj}, with the choice
        \begin{align}\label{q0}
        q^0(x):=\frac{1}{M-M^3\varepsilon^2}\ue{p}(0,x).
        \end{align}

        \item The initial energy of $\ue{p}$ satisfies
        \begin{align}\label{in_en}
        E^\varepsilon(0):=E(\ue{p})(0) = 1+\mathcal O(\sqrt{\varepsilon}),
        \end{align}
        i.e. it is bounded as $\varepsilon\to 0$.

        \item The energy of $\ue{p}$ is exponentially small off the vertical ray $(t,x_0)$:
        \begin{align}\label{en_ray}
        \int_{|x-x_0|>\varepsilon^{\frac 18}} \left(|\ue{p}_x|^2 + |\ue{p}_t|^2\right)\,{\rm dx} = \mathcal O\big(e^{-2\varepsilon^{-\frac 14}}\big).
        \end{align}
    \end{enumerate}
\end{theorem}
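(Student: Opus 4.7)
The plan is to verify each of the three claims by direct computation on the explicit expression $\ue{p}(t,x) = c(\varepsilon) a(t) h(x)$, where $a(t) = e^{(M-M^3\varepsilon^2)t}$ and $h(x) = e^{ix/\varepsilon - (x-x_0)^2/\sqrt{\varepsilon}}$. The separation of variables in $t$ and $x$ is the essential structural feature: it makes the memory integral computable in closed form and dictates the precise prescription for $q^0$.

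For part (1), since the time dependence is a single exponential, the memory integral collapses to
\begin{equation*}
    \int_0^t \ue{p}_{xx}(s,x)\,{\rm ds} = \frac{\ue{p}_{xx}(t,x) - \ue{p}_{xx}(0,x)}{M-M^3\varepsilon^2}.
\end{equation*}
The choice $q^0 = \ue{p}(0,\cdot)/(M-M^3\varepsilon^2)$ is engineered precisely to cancel the $-\ue{p}_{xx}(0,x)$ contribution, so that the left-hand side of \eqref{wave_mem_adj} reduces to $(M-M^3\varepsilon^2)^2 \ue{p}(t,x) + \frac{M^3\varepsilon^2}{M-M^3\varepsilon^2}\ue{p}_{xx}(t,x)$. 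Using the elementary identity $h''(x)/h(x) = -\varepsilon^{-2} - 4i(x-x_0)\varepsilon^{-3/2} + 4(x-x_0)^2\varepsilon^{-1} - 2\varepsilon^{-1/2}$ and expanding $M^3\varepsilon^2/(M-M^3\varepsilon^2) = M^2\varepsilon^2 + O(\varepsilon^4)$, the leading $O(1)$ contributions $M^2$ and $-M^2$ annihilate exactly, and what remains factors as $\ue{p}(t,x)$ times a quantity of size $O(\varepsilon^2) + O(\sqrt{\varepsilon}(x-x_0)) + O(\varepsilon(x-x_0)^2) + O(\varepsilon^{3/2})$. Since each $(x-x_0)^k$ moment against the Gaussian weight contributes $\varepsilon^{k/4}$ in $L^2(\RR)$, the residual tends to zero with $\varepsilon$.

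For part (2), I would bootstrap from the $H^1$ computation already carried out in the construction of $c(\varepsilon)$: the normalization $c(\varepsilon) = x_0^{-1}(2/\pi)^{1/4}\varepsilon^{7/8}$ enforces $\norm{p^{\varepsilon,0}_x}{L^2(\RR)}^2 = 1 + O(\sqrt{\varepsilon})$ while $\norm{p^{\varepsilon,0}}{L^2(\RR)}^2 = \varepsilon^2/x_0^2$. Since $\ue{p}_t(0,x) = (M-M^3\varepsilon^2) p^{\varepsilon,0}(x)$, the kinetic term is $\norm{\ue{p}_t(0)}{L^2(\RR)}^2 = O(\varepsilon^2)$ and therefore subdominant, yielding $E^\varepsilon(0) = 1 + O(\sqrt{\varepsilon})$. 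Part (3) is a Gaussian tail estimate: the change of variables $z = \sqrt{2}(x-x_0)/\varepsilon^{1/4}$ maps $\{|x-x_0|>\varepsilon^{1/8}\}$ onto $\{|z|>\sqrt{2}\varepsilon^{-1/8}\}$ and converts the weight $e^{-2(x-x_0)^2/\sqrt{\varepsilon}}$ into $e^{-z^2}$. The classical tail bound $\int_A^\infty e^{-z^2}\,{\rm dz}\leq \frac{1}{2A}e^{-A^2}$ applied at $A=\sqrt{2}\varepsilon^{-1/8}$ produces a bound of order $e^{-2\varepsilon^{-1/4}}$, and the polynomial factors $\varepsilon^{-2}$ and $(x-x_0)^2/\varepsilon$ that appear when differentiating $\ue{p}$ are easily dominated by this super-exponential decay.

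The main technical obstacle is the bookkeeping in part (1): the particular phase $e^{(M-M^3\varepsilon^2)t}$, as opposed to the naive choice $e^{Mt}$ suggested by the informal geometric-optics ansatz, is indispensable for the $O(1)$ terms to cancel once the memory integral is combined with the $p_{tt}$ term. A secondary conceptual point is clarifying in what sense $\ue{p}$ is an \emph{approximate solution} --- the natural notion here is that the residual tends to zero in $L^2(\RR)$, which the above computation indeed delivers, and this is sufficient to justify the lack-of-observability heuristic that precedes the statement.
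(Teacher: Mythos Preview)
Your proposal is correct and follows essentially the same direct-computation route as the paper: exploit the separated form $\ue{p}=c(\varepsilon)a(t)h(x)$ to evaluate the memory integral in closed form for part~(1), reuse the Gaussian-moment calculations behind the normalization $c(\varepsilon)$ for part~(2), and reduce part~(3) to a Gaussian tail estimate. The only cosmetic difference is that you invoke the elementary bound $\int_A^\infty e^{-z^2}\,{\rm dz}\le \tfrac{1}{2A}e^{-A^2}$ in part~(3), whereas the paper goes through the asymptotic expansion of $\erfc$; both give the same $\mathcal O\!\big(e^{-2\varepsilon^{-1/4}}\big)$ conclusion.
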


\begin{proof}
All the properties are obtained through direct computations, employing the definition \eqref{pe} of $\ue{p}$. First of all, we can readily check that
\begin{itemize}
    \item $\ue{p}_{tt}(t,x) = \left(M-M^3\varepsilon^2\right)^2\ue{p}(t,x)$,
	\\
    \item $\displaystyle\ue{p}_{xx}(t,x) = \left[\left(\frac i\varepsilon - \frac{2}{\sqrt{\varepsilon}}(x-x_0)\right)^2 - \frac{2}{\sqrt{\varepsilon}}\right]\ue{p}(t,x)$,
	\\
    \item $\displaystyle\int_0^t \ue{p}_{xx}(s,x)\,{\rm ds} = \frac{1}{M-M^3\varepsilon^2}\left[\left(\frac i\varepsilon - \frac{2}{\sqrt{\varepsilon}}(x-x_0)\right)^2 - \frac{2}{\sqrt{\varepsilon}}\right]\Big(\ue{p}(t,x) - p^{\varepsilon,0}(x)\Big)$,
	\\ 
    \item $\displaystyle q^0_{xx}(x) = \frac{1}{M-M^3\varepsilon^2}\left[\left(\frac i\varepsilon - \frac{2}{\sqrt{\varepsilon}}(x-x_0)\right)^2 - \frac{2}{\sqrt{\varepsilon}}\right]p^{\varepsilon,0}(x)$.
\end{itemize}
In view of that, we have
\begin{align*}
    \ue{p}_{tt} &- \ue{p}_{xx} + M\int_0^t\ue{p}_{xx}\,{\rm ds} + Mq^0_{xx}
    \\
    &= \left[M^2 + \mathcal O(\varepsilon^2) - \frac{M^3\varepsilon^2}{M-M^3\varepsilon^2}\left(\frac i\varepsilon - \frac{2}{\sqrt{\varepsilon}}(x-x_0)\right)^2 + \frac{M^3\varepsilon^2}{M-M^3\varepsilon^2}\frac{2}{\sqrt{\varepsilon}}\right]\ue{p}(t,x)
    \\
    &= \left[M^2 + \mathcal O(\varepsilon^2) + \mathcal O(\varepsilon^{\frac 12}) + \mathcal O(\varepsilon^{\frac 32})  \right]\ue{p}(t,x) = \mathcal O(\varepsilon^{\frac{7}{8}}),
\end{align*}
taking into account the scaling of $\ue{p}$ with respect to $\varepsilon$. Hence, $\ue{p}$ is an approximate solution of \eqref{wave_mem_adj}.

In order to prove the conservation of the energy \eqref{in_en}, let us firstly recall that $E^\varepsilon(t)$ is classically defined through the following integral
\begin{align*}
    E^\varepsilon(t) := \frac 12 \int_\RR \left(|\ue{p}_t(t,x)|^2 + |\ue{p}_x(t,x)|^2\right)\,{\rm dx}.
\end{align*}
We then have
\begin{align*}
    E^\varepsilon(0) &= \frac 12 \int_\RR \left(|\ue{p}_t(0,x)|^2 + |\ue{p}_x(0,x)|^2\right)\,{\rm dx} 
    \\
    &= \frac{\varepsilon^{\frac 74}}{x_0^2\sqrt{2\pi}} \int_\RR\! e^{-\frac{2}{\sqrt{\varepsilon}}(x-x_0)^2}\!\left(\frac{x^2}{\varepsilon^2} + \frac 1\varepsilon(x-x_0)^4 + \left(M-M^3\varepsilon^2\right)^2\right)\,{\rm dx}.
\end{align*}

The three integrals appearing in the above expression have already been computed above. Substituting their values we find
\begin{align*}
    E^\varepsilon(0) &= \frac{1}{x_0^2\sqrt{2\pi}}\varepsilon^{\frac 74} \left(\frac{x_0^2\sqrt{\pi}}{\sqrt{2}}\varepsilon^{-\frac 74} + \frac{\sqrt{\pi}}{4\sqrt{2}}\varepsilon^{-\frac 54} + \frac{3\sqrt{\pi}}{16\sqrt{2}}\varepsilon^{\frac 14} + \left(M-M^3\varepsilon^2\right)^2\frac{\sqrt{\pi}}{\sqrt{2}}\varepsilon^{\frac 14} \right) = \frac 12 + \mathcal O(\varepsilon^{\frac 12}).
\end{align*}
Let us now conclude with the proof of \eqref{en_ray}. We have
\begin{align*}
    &\int_{|x-x_0|>\varepsilon^{\frac 18}} \left(|\ue{p}_x|^2 + |\ue{p}_t|^2\right)\,{\rm dx}
    \\
    &\quad= \frac{\varepsilon^{\frac 74}}{x_0^2\sqrt{2\pi}}e^{2(M-M^3\varepsilon^2)t}\int_{|x-x_0|>\varepsilon^{\frac 18}} e^{-\frac{2}{\sqrt{\varepsilon}}(x-x_0)^2}\left(\frac{x^2}{\varepsilon^2} + \frac 1\varepsilon(x-x_0)^4 + \left(M-M^3\varepsilon^2\right)^2\right)\,{\rm dx}
\end{align*}
Moreover, we can easily compute
\begin{align*}
    \int_{|x-x_0|>\varepsilon^{\frac 18}} & \frac{x^2}{\varepsilon^2}e^{-\frac{2}{\sqrt{\varepsilon}}(x-x_0)^2}\,{\rm dx}
    \\
    &= \frac{\varepsilon^{-\frac 74}}{\sqrt{2}} \int_{|z|>\sqrt{2}\varepsilon^{-\frac 18}} \left(x_0^2+\sqrt{2}\varepsilon^{\frac 14}x_0z+\frac{\varepsilon^{\frac 12}}{2}z^2\right)e^{-z^2}\,{\rm dz}
    \\
	&= \frac{\varepsilon^{-\frac 74}}{\sqrt{2}} \int_{\sqrt{2}\varepsilon^{-\frac 18}}^{+\infty} \left(2x_0^2 + \varepsilon^{\frac 12}z^2\right)e^{-z^2}\,{\rm dz}
    \\
    &= \frac{x_0^2\sqrt{\pi}}{\sqrt{2}}\varepsilon^{-\frac 74}\erfc\left(\sqrt{2}\varepsilon^{-\frac 18}\right) + \frac 12\varepsilon^{-\frac{11}{8}}e^{-2\varepsilon^{-\frac 14}} + \frac{\sqrt{\pi}}{4\sqrt{2}}\varepsilon^{-\frac 54}\erfc\left(\sqrt{2}\varepsilon^{-\frac 18}\right),
\end{align*}
where with $\erfc(\cdot)$ we indicate the complementary error function. In addition
\begin{align*}
    \int_{|x-x_0|>\varepsilon^{\frac 18}}  \frac 1\varepsilon(x-x_0)^4 e^{-\frac{2}{\sqrt{\varepsilon}}(x-x_0)^2} \,{\rm dx} &= \frac{\varepsilon^{\frac 14}}{\sqrt{2}}\int_{|z|>\sqrt{2}\varepsilon^{-\frac 18}} \frac {z^4}{2}e^{-z^2} \,{\rm dz}
    \\
    &= \varepsilon^{-\frac 18}e^{-2\varepsilon^{-\frac 14}} + \frac 34\varepsilon^{\frac 18}e^{-2\varepsilon^{-\frac 14}} + \frac{3\sqrt{\pi}}{8\sqrt{2}}\varepsilon^{\frac 14}\erfc\left(\sqrt{2}\varepsilon^{-\frac 18}\right).
\end{align*}
Finally,
\begin{align*}
    \int_{|x-x_0|>\varepsilon^{\frac 18}} \left(M-M^3\varepsilon^2\right)^2 e^{-\frac{2}{\sqrt{\varepsilon}}(x-x_0)^2}\,{\rm dx}
 &= \left(M-M^3\varepsilon^2\right)^2\frac{\varepsilon^{\frac 14}}{\sqrt{2}}\int_{|z|>\sqrt{2}\varepsilon^{-\frac 18}}  e^{-z^2}\,{\rm dz} 
    \\
    &= \left(M-M^3\varepsilon^2\right)^2\frac{\sqrt{\pi}}{\sqrt{2}}\varepsilon^{\frac 14}\erfc\left(\sqrt{2}\varepsilon^{-\frac 18}\right).
\end{align*}
Adding all the components, we then get
\begin{align*}
    \int_{|x-x_0|>\varepsilon^{\frac 18}} \left(|\ue{p}_x|^2 + |\ue{p}_t|^2\right)\,{\rm dx}  e^{2(M-M^3\varepsilon^2)t}\left[\erfc\left(\sqrt{2}\varepsilon^{-\frac 18}\right)\left(1 + \mathcal O(\varepsilon^{\frac 12}) \right) + \mathcal O\Big(e^{-2\varepsilon^{-\frac 14}}\Big) \right].
\end{align*}

Moreover, it is well-known that the complementary error function has the following asymptotic expansion (see, e.g., \cite[Section 7.1.2]{olver2010nist})
\begin{align*}
    \erfc(z) \sim \frac{e^{-z^2}}{\sqrt{\pi}z}\sum_{m=0}^{+\infty} (-1)^m\frac{1\cdot3\cdot5\cdots(2m-1)}{2^mz^{2m}}, \quad \textrm{ as } z\to +\infty.
\end{align*}
Hence, in our case we have
\begin{align*}
    \erfc\left(\sqrt{2}\varepsilon^{-\frac 18}\right) \sim \frac{\varepsilon^{\frac 18}}{\sqrt{\pi}} e^{-2\varepsilon^{-\frac 14}}\sum_{m=0}^{+\infty} (-1)^m\frac{1\cdot3\cdot5\cdots(2m-1)}{2^m}\varepsilon^{\frac m4} = \mathcal O\Big(e^{-2\varepsilon^{-\frac 14}}\Big),
\end{align*}
and, finally,
\begin{align*}
    \int_{|x-x_0|>\varepsilon^{\frac 18}} \left(|\ue{p}_x|^2 + |\ue{p}_t|^2\right)\,{\rm dx} = \mathcal O\Big(e^{-2\varepsilon^{-\frac 14}}\Big).
\end{align*}
\end{proof}

\section*{Acknowledgments}

This project has received funding from the European Research Council (ERC) under the European Unions Horizon 2020 research and innovation programme (grant agreement No. 694126-DyCon). The work of the first author was partially supported by the Grants MTM2014-52347, MTM2017-92996 and MTM2017-82996-C2-1-R COSNET of MINECO (Spain), by the ELKARTEK project KK-2018/00083 ROAD2DC of the Basque Government, and by the Grant FA9550-18-1-0242 of AFOSR. Part of this work was done during the second author visit to DeustoTech, and he would like to thank the members of the Chair of Computational Mathematics for their kindness and warm hospitality. The authors wish to thank Prof. Enrique Zuazua (Deusto Tech-University of Deusto, Universidad Autnoma de Madrid and Université Pierre et Marie Curie, Paris) for interesting discussions on the topic of this paper.


\end{document}